\DeclareRobustCommand{\divby}{%
  \mathrel{\text{\vbox{\baselineskip.65ex\lineskiplimit0pt\hbox{.}\hbox{.}\hbox{.}}}}%
}
\newcommand*{\rom}[1]{\expandafter\@slowromancap\romannumeral #1@}
\newtheorem{theorem}[equation]{Theorem}
\newtheorem{proposition}[equation]{Proposition}
\newtheorem{lemma}[equation]{Lemma}
\newtheorem{corollary}[equation]{Corollary}
\theoremstyle{definition}
\newtheorem{definition}[equation]{Definition}
\theoremstyle{remark}
\newtheorem{remark}[equation]{Remark}
\def \Hom {\operatorname{Hom}}
\def \End {\operatorname{End}}
\def \ind {\operatorname{ind}}
\def \GL {\operatorname{GL}}
\def \rk {\operatorname{rk}}
\newcommand{\CC}{\mathbb{C}}
\newcommand{\ZZ}{\mathbb{Z}}
\newcommand{\NN}{\mathbb{N}}
\author{Iuliya Beloshapka}
\address{ETH Zurich, Switzerland}
\email{iuliya.beloshapka@math.ethz.ch}
\title{Irreducible representations of the group of unipotent matrices of order $\mathbf{4}$ over integers}
\begin{document}

\maketitle

\begin{abstract}
We study a coarse moduli space of irreducible representations of the group of unipotent matrices of order $4$ over the ring of integers which have finite weight. All such representations are known to be monomial (see~\cite{bel-gor}). To describe a coarse moduli space of such representations, we need to study pairs of subgroups and their characters, which induce non-isomorphic irreducible representations. We obtain a full classification of such pairs and, respectively, a coarse moduli space.
\end{abstract}

\section{Introduction}

Moduli spaces of irreducible representations of finitely generated nilpotent groups are supposed to be used in questions related to $L$-functions of varieties over finite fields (see~\cite{ParshinCong}). It does not seem reasonable to study such moduli spaces in full generality, so one should restrict the class of all irreducible objects. Brown in~\cite{Brown} introduced the notion of a finite weight representation. A representation $\pi$ of a group~$G$ has finite weight if there is a subgroup $H\subset G$ and a character $\chi$ of $H$ such that the vector space $\Hom_H(\chi,\pi\vert_H)$ is non-zero and finite-dimensional. A representation $\pi$ is called monomial, if there exist a subgroup $H \subset G$ and a character $\chi:~H~\rightarrow~\CC^{*}$ such that $\pi~\simeq~\ind_{H}^{G}(\chi)$. In the plenary lecture at ICM $2010$, Parshin conjectured that irreducible representations of finitely generated nilpotent groups are monomial if and only if they have finite weight (see~\cite[\S~5.4(i)]{ParshinCong} for details). The conjecture was proven in a joint work with Gorchinskiy~\cite{bel-gor}. This allows us to approach the moduli problem of irreducible representations $\pi$ which have finite weight, since they always correspond to certain pairs $(H,~\chi)$ such that $\pi~\simeq~\ind_{H}^{G} (\chi)$.

Parshin and Arnal have studied in detail the case of the Heisenberg group over the integers~\cite{Parshin-Arnal}. For this group, they constructed a parameter space (i.e., a coarse moduli space) of complex irreducible representations which have finite weight. It turns out that the parameter space consists of two parts, corresponding to finite-dimensional irreducible representations and infinite-dimensional ones. The first one is a countable disjoint union of copies of $ \CC^{*}  \times \CC^{*} $. The second one, in turn, is a countable disjoint union of elliptic fibrations over $\CC^{*} \setminus S^1$ and components which do not have a complex variety structure (see~\cite{ParshinCong},~\cite{Parshin} for details). The question was also studied in the case of mixed real and integer coefficients, which was motivated by the theory of two-dimensional local fields~\cite{bel-1}.

The construction looks similar to Kirillov's orbit method~\cite{orbit} for connected real or complex nilpotent Lie groups. Attempts to extend Kirillov's method to $p$-adic nilpotent groups were made in~\cite{Boyarchenko}. Also, there exists an analogue of Kirillov's character formula for the discrete Heisenberg group.

Our main result is the construction of a coarse moduli space of irreducible representations of the group of unipotent $4 \times 4$ matrices over integers which have finite weight (see Theorem~\ref{theorem:main} and Table~\ref{table}). We denote this group by~$G$. In other words, we provide a full classification of pairs $(H, \chi)$ such that corresponding representations $\ind_{H}^{G}(\chi)$ are irreducible.

\bigskip

\begin{itemize}
\item[]{}
{\it {\sc Theorem.} There is a one-to-one correspondence between the following spaces:

$1.$ The union of the total spaces of the following bundles: $X_{1, 1} \rightarrow \Xi_{1,1}$, $X_{2, 0} \rightarrow \Xi_{2,0}$, $X_{2, 1} \rightarrow \Xi_{2,1}$, $X_{1, 2} \rightarrow \Xi_{1,2}$, $X_{2, 2} \rightarrow \Xi_{2,2}$, and $X_{3, 2} \rightarrow \Xi_{3,2}$. 

$2.$ A coarse moduli space of irreducible representations for the group of unipotent matrices of order $4$ with integer entries which have finite weight.

\medskip

A map from $X_{r_1, r_2} \rightarrow \Xi_{r_1,r_2}$ to the set of irreducible monomial representations is defined as follows: 
$$(H, \chi) \longmapsto \ind_{H}^{G}(\chi).$$\
}
\end{itemize}

\bigskip

First steps towards the moduli space problem were made in~\cite{bel-2}. The coarse moduli space has a natural iterated structure of a bundle over the set of certain subgroups $H \subset G$ (see Theorem~\ref{theorem:main}). It turns out that the number of isomorphism classes of irreducible finite weight representations of discrete nilpotent groups increases very rapidly, while a nilpotency class increments only by one. Namely, for the Heisenberg group over the integers, there are only two substantially different cases of weight pairs $(H, \chi)$ which correspond to irreducible monomial representations. In turn, there are over $50$ different cases for the group $G$, which makes our classification quite technical and lengthy. The developed techniques may be used for further generalizations to finitely generated nilpotent groups of higher nilpotency classes. 

\bigskip

The paper is organized as follows. In Section $2$, we provide results which concern an arbitrary finitely generated nilpotent group, some of which are well known. In Subsection $2.1$ we collect well-known formulas for endomorphisms of finitely induced representations, based on Frobenius reciprocity and Mackey's formula. Also, we introduce a notion of an irreducible weight pair. In Subsection $2.2$ we recall a result from~\cite{bel-gor} which relates ireducibility and Schur irreducibility of finitely induced representations. In Subsection $2.3$ we prove a result which concerns irreducible weight pairs with abelian subgroups of finitely generated torsion-free nilpotent groups. In Subsection $2.4$ we recall a criterion from~\cite{bel-gor} for irreducible weight pairs to be equivalent. We also prove there a result which concerns equivalent weight pairs with subgroups which have the same radical subgroup. In Subsection $2.5$ we recall a definition of ranks of finitely generated nilpotent groups.

All the other sections concern the group of unipotent matrices of order $4$ over the ring of integers. 

In Section $3$ we list the ranks of subgroups which may appear for this group and which provide irreducible monomial representations. The next six sections are organised in a similar way as we are dealing with six possible cases of ranks of subgroups. In each of those sections first we describe generators of subgroups of given ranks, and then we obtain the conditions on a character of a subgroup that the weight pair is irreducible. After that, we study equivalent irreducible weight pairs such that a subgroup is the same, but characters are different. Then we find all irreducible weight pairs which are equivalent to a given one such that subgroups in those pairs are different.

Section $10$ sums up all the possible cases of ranks of subgroups in a classification theorem, which is the main result of the paper.

\medskip

\section{Preliminaries}

We recall some well-known facts. Let $G$ be an arbitrary group, and $H$ be a subgroup of a group $G$. We use notation from~\cite{bel-gor}.

\subsection{Endomorphisms of finitely induced representations.}
\begin{definition} \label{def:ss}
Let $S(H) \subset G$ be the set of all elements $g \in G$ such that the index of $H^g \cap H$ in $H$ is finite.
\end{definition}

Let $\chi: H \rightarrow \CC^{*}$ be a character of a subgroup $H$.

\begin{proposition}\label{prop:end}
There is a canonical isomorphism of vector spaces
$$
\End_{G} \big( \ind^{G}_{H}(\chi) \big) \simeq \bigoplus_{ \bar g \in H \backslash S(H) / H }  \Hom_{H^g \cap H} \big( \chi \vert_{H^g \cap H},  \ \chi^{g} \vert_{H^{g} \cap H}  \big) \, .
$$
\end{proposition}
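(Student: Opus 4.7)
The plan is to string together Frobenius reciprocity and Mackey's double-coset formula, with a small extra vanishing argument that singles out the double cosets in $S(H)$.

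First, I would apply Frobenius reciprocity (compact induction is left adjoint to restriction) to rewrite
$$
\End_G\bigl(\ind_H^G \chi\bigr) \simeq \Hom_H\bigl(\chi, \Res_H^G \ind_H^G \chi\bigr),
$$
and then invoke Mackey's decomposition of the restriction of an induced representation:
$$
\Res_H^G \ind_H^G \chi \simeq \bigoplus_{\bar g \in H \backslash G / H} \ind_{H^g \cap H}^H \bigl(\chi^g \vert_{H^g \cap H}\bigr).
$$
Because $\chi$ is one-dimensional, any $H$-equivariant map out of $\chi$ lands in finitely many summands, so $\Hom_H(\chi,-)$ commutes with this direct sum and it suffices to compute each summand separately.

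Next, I would split the analysis according to whether the representative $g$ lies in $S(H)$. If $g \in S(H)$, then $[H : H^g \cap H]$ is finite, so compact induction coincides with coinduction; the right adjoint version of Frobenius reciprocity then yields
$$
\Hom_H\bigl(\chi, \ind_{H^g \cap H}^H \chi^g\bigr) \simeq \Hom_{H^g \cap H}\bigl(\chi \vert_{H^g \cap H},\, \chi^g \vert_{H^g \cap H}\bigr),
$$
which is precisely the summand appearing in the statement. If instead $g \notin S(H)$, I claim the corresponding $\Hom$ space vanishes: a nonzero intertwiner would produce a function $f : H \to \CC$ satisfying both $f(kh) = \chi^g(k) f(h)$ for $k \in H^g \cap H$ and $f(hx) = \chi(x) f(h)$ for $x \in H$; the second identity at $h = 1$ gives $f(x) = \chi(x) f(1)$, so $f$ is either identically zero or nonvanishing on all of $H$, and the latter contradicts the finite-support-modulo-$(H^g \cap H)$ condition when the index is infinite.

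Assembling the two cases and reindexing the surviving double cosets by $\bar g \in H \backslash S(H) / H$ yields the desired isomorphism. The main obstacle is the vanishing statement in the second case: everything else is a purely formal consequence of adjunctions and Mackey's formula, whereas here one must actually use the compact-support definition of $\ind$, which is exactly what forces the contributing indices to belong to $S(H)$.
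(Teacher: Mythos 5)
Your proof is correct and follows exactly the route the paper itself indicates: the paper states Proposition~\ref{prop:end} without proof as a well-known fact, saying only that it is ``based on Frobenius reciprocity and Mackey's formula,'' and your chain of arguments --- left-adjoint Frobenius reciprocity, Mackey's double-coset decomposition, commuting $\Hom_H(\chi,-)$ past the direct sum since $\chi$ is finitely generated, identifying $\ind$ with coinduction over the finite-index subgroups $H^g\cap H$ for $g\in S(H)$, and the finite-support vanishing argument for $g\notin S(H)$ --- is precisely that standard proof. No gaps.
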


Proposition~\ref{prop:end} motivates the following definition.

\begin{definition}\label{def:perf}
Let $S(H, \chi) \subset G$ be the set of all elements $g \in S(H)$ such that
$$
\Hom_{H^g\cap H} \big(\chi \vert_{H^g \cap H}, \chi^{g} \vert_{H^g \cap H}\big)\ne 0\,,
$$

\begin{center}
or, equivalently,
\end{center}
$$\chi \vert_{H^g \cap H} = \chi^{g} \vert_{H^g \cap H}.$$
\end{definition}

As an immediate corollary from the Proposition~\ref{prop:end}, we obtain a canonical isomorphism of vector spaces
$$
\End_{G} \big( \ind_{H}^{G} (\chi) \big) \simeq \bigoplus_{H \backslash S(H, \chi) / H} \CC \, .
$$

\begin{definition}\label{definition:irr}
\hspace{0cm}
An {\it irreducible weight pair} is a pair $(H,\chi)$ such that $H \subset G$ is a subgroup,~$\chi$ is a character of $H$, and~$\ind_{H}^{G} (\chi)$ is an irreducible representation.
\end{definition}

\begin{remark}
Unfortunately, this definition is in a way an abuse of notation. In the paper~\cite{bel-gor} an {\it irreducible pair} was defined as a pair~$(H,\rho)$, where $H\subset G$ is a subgroup and~$\rho$ is a (non-zero) finite-dimensional irreducible representation of~$H$. 
\end{remark}

\subsection{Irreducibility vs. Schur irreducibility.}

We recall that a representation $\pi$ is called { \it Schur-irreducible} if $\End_{G}(\pi) = \CC$. 

We also recall that a generalization of Schur's lemma to countable groups holds true. Namely, any countably dimensional irreducible representation over $\CC$ of an arbitrary group is Schur-irreducible (see, e.g.,~\cite[Claim 2.11]{Bernstein}).

\begin{remark}\label{rem-cent}
For an irreducible weight pair $(H, \chi)$, the centralizer $C_{G}(H) = \{ g \in G \ \vert \ [g,h]~=~\mathrm{1} \ \text{for any} \ h \in H \}$ is a subgroup of $H$, and, in particular, the center $Z(G)$ is contained in $H$. 
\end{remark}

\begin{proof}
Clearly, $C_{G}(H) \subset S(H, \chi)$. For an irreducible weight pair $(H, \chi)$, it follows from Schur's lemma that $\End_{G}(\pi) = \CC$, and then $S(H, \chi) = H$.
\end{proof}

Now let $G$ be a finitely generated nilpotent group.

\medskip
We will essentially use the following theorem. It allows us to replace irreducibility of representations with Schur-irreducibility, which is much easier to check. 

\medskip

\begin{theorem}\label{theorem:schur-irr}\cite[Theorem 3.14]{bel-gor} Let $H$ be a subgroup of a finitely generated nilpotent group~$G$. Let $\rho$ be an irreducible complex representation of $H$ such that the finitely induced representation $\ind_{H}^{G} (\rho)$ satisfies~$\End_{G} \big( \ind_{H}^{G} (\rho) \big) =~\CC$. Then the representation~${\ind_{H}^{G} (\rho)}$ is irreducible.
\end{theorem}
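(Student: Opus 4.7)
My plan is an induction on the Hirsch length $h(G)$ of the finitely generated nilpotent group $G$. The base case $h(G) = 0$ reduces to $G$ finite, in which case $\ind_{H}^{G}(\rho)$ is finite-dimensional and completely reducible, so Schur-irreducibility coincides with irreducibility by the classical Maschke-Burnside argument.

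For the inductive step, I would exploit the fact that the center $Z := Z(G)$ of the nilpotent group $G$ is nontrivial. Each $z \in Z$ acts on $\ind_{H}^{G}(\rho)$ as a $G$-equivariant endomorphism (since $z$ commutes with every element of $G$), and by the hypothesis $\End_{G}(\ind_{H}^{G}(\rho)) = \CC$ this action must be a scalar; this defines a central character $\omega: Z \to \CC^{*}$. A direct calculation on the element $f_{0}$ of $\ind_{H}^{G}(\rho)$ supported on the single coset $H$ (with value $\rho(h) v$ at $h \in H$) then shows that $z \cdot f_{0}$ has support in a translate of $H$ which differs from $H$ precisely when $z \notin H$; comparing with $\omega(z) f_{0}$ forces $Z \subset H$ and $\rho \vert_{Z} = \omega \cdot \mathrm{Id}$.

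Next I would descend along the normal central subgroup $N := \ker(\omega) \subset Z$. Since $N$ acts trivially on $\ind_{H}^{G}(\rho)$, the representation descends canonically to the finitely generated nilpotent group $\bar G := G/N$ with $\bar H := H/N$ and descended representation $\bar \rho$, preserving the endomorphism algebra. When $N$ has positive Hirsch length the quotient satisfies $h(\bar G) < h(G)$, so the induction hypothesis yields irreducibility of $\ind_{\bar H}^{\bar G}(\bar \rho)$, which pulls back to irreducibility of $\ind_{H}^{G}(\rho)$.

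The main obstacle is the opposite case, when $\omega$ is injective on the torsion-free part $Z^{\mathrm{tf}} \cong \ZZ^{k}$ so that $N$ is finite and the above descent fails to reduce the Hirsch length. Here one needs a genuinely infinite-dimensional argument: I would combine Clifford theory along the $G$-orbit of $\omega$ in $\widehat{Z}$ with the residual finiteness of $G$---which allows approximation of $\ind_{H}^{G}(\rho)$ by finite-dimensional representations of finite quotients where Schur-irreducibility forces irreducibility---or alternatively invoke a polarising subgroup in the spirit of the Kirillov-Pukanszky orbit method adapted to the discrete nilpotent setting. Coordinating this case with the descent argument in the complementary case, and tracking how the Schur-irreducibility hypothesis behaves under each reduction, is where I expect the technical heart of the proof to lie.
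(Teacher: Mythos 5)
The paper does not actually prove this statement: Theorem~\ref{theorem:schur-irr} is quoted verbatim from \cite[Theorem 3.14]{bel-gor}, so there is no in-paper proof to compare yours against. Judged on its own terms, your proposal contains a genuine gap. The reductions you carry out are correct but standard: the base case of finite $G$, the observation that each central $z$ acts as a $G$-endomorphism and hence as a scalar $\omega(z)$, the support computation on $f_0$ forcing $Z(G)\subset H$ and $\rho\vert_{Z}=\omega\cdot\mathrm{Id}$ (compare Remark~\ref{rem-cent}), and the descent along $N=\ker(\omega)$ when $N$ has positive Hirsch length. None of this touches the substance of the theorem. The entire content lies in the case you defer to the end, where $\omega$ is injective on the torsion-free part of $Z(G)$ and no Hirsch-length reduction through the center is available; you explicitly leave that case open, so what you have is a reduction, not a proof.

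Moreover, the two strategies you float for the hard case are unlikely to work as stated. Residual finiteness is of no help here: when $\omega(z)$ is not a root of unity the representation $\ind_H^G(\rho)$ does not factor through, and is not detected by, any finite quotient of $G$, so ``approximation by finite-dimensional representations of finite quotients'' discards exactly the data that distinguishes this case. Clifford theory along the $G$-orbit of $\omega$ in $\widehat{Z}$ is vacuous because $Z$ is central, hence the orbit is a single point. An appeal to a polarizing subgroup ``in the spirit of the orbit method'' is not an argument for discrete groups; making such a polarization work for finitely generated nilpotent groups is essentially the technical achievement of \cite{bel-gor}, where the proof proceeds by a different induction (on the structure of the pair $(G,H)$, using isolators and the growth of normalizers of proper isolated subgroups in nilpotent groups) rather than by splitting on the kernel of the central character. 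To complete your argument you would need to supply precisely that missing machinery.
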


We will use this theorem for $\rho = \chi$, a one-dimensional representation of $H$. 
\begin{remark}
Under the conditions of Theorem~\ref{theorem:schur-irr}, a representation~${\ind_{H}^{G} (\rho)}$ is irreducible if and only if $S(H, \chi) = H$.
\end{remark}

\medskip


\subsection{Irreducible weight pairs with abelian subgroups} 

Let us denote by $\gamma_1(G) = [G, G]$, $\gamma_k(G) = [G, \gamma_{k-1}(G)]$.

\begin{definition}\label{def:star}
Let $H^{*}$ be the smallest subgroup of $G$ with the following properties: $H^*$ contains $H$ and if an element $g\in G$ satisfies $g^{i} \in H^*$ for some positive integer $i$, then~${g\in H^{*}}$.

A subgroup $H$ is called {\it isolated} if $H = H^*$.

\end{definition} 

For elements $g, h \in G$ we will denote a conjugated element $h g h^{-1}$ by $g^{h}$.

\begin{proposition} \label{prop:isol}
Let $G$ be a finitely generated torsion-free nilpotent group, such that torsion of $(G / \gamma_k(G))$ is trivial for every $k$.
Let $H \subset G$ be an abelian subgroup of $G$. If $H$ form an irreducible weight pair $(H, \chi)$ for some character $\chi: H \rightarrow \CC^{*}$, then $H$ is isolated.
\end{proposition}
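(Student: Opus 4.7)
The strategy is to reduce the isolation of $H$ to the centralizer containment $C_G(H) \subset H$ supplied by Remark~\ref{rem-cent}. Concretely, I want to show: if $g \in G$ satisfies $g^n \in H$ for some positive integer $n$, then $g \in C_G(H)$. Once this is established, Remark~\ref{rem-cent} immediately forces $g \in H$, which is exactly the isolation condition $H = H^*$.

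The heart of the argument is the following ``root extraction for commutators'' statement, which uses the full strength of the hypothesis on $G$: if $G$ is nilpotent with $G/\gamma_k(G)$ torsion-free for every $k$, then for any $g, h \in G$ and any $n > 0$, the equality $[h, g^n] = 1$ forces $[h, g] = 1$. Granting this, the proof of the proposition concludes quickly. For each $h \in H$, abelianness of $H$ gives $[h, g^n] = 1$, hence by the claim $[h, g] = 1$. So $g$ commutes with every element of $H$, i.e., $g \in C_G(H) \subset H$.

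I would prove the key commutator claim by induction on the nilpotency class $c$ of $G$. The base case $c = 1$ is trivial since $G$ is abelian. For the inductive step, I would pass to $\bar G := G/\gamma_c(G)$, which is again nilpotent (of class $c-1$) and still satisfies the hypothesis ($\gamma_k(\bar G) = \gamma_k(G)/\gamma_c(G)$ for $k \le c$, so $\bar G/\gamma_k(\bar G) = G/\gamma_k(G)$ is torsion-free). The induction hypothesis applied in $\bar G$ yields $[h, g] \in \gamma_c(G)$. Now $\gamma_c(G)$ lies in the center of $G$, so one can apply the standard commutator identity
$$
[h, g^n] \;=\; [h,g] \cdot [h,g]^{g} \cdots [h,g]^{g^{n-1}} \;=\; [h,g]^n,
$$
the last equality holding because $[h,g]$ is central. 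Combined with the hypothesis $[h, g^n] = 1$, this gives $[h,g]^n = 1$. Since $\gamma_c(G) \subset G$ is a subgroup of a torsion-free group, it is torsion-free, and therefore $[h,g] = 1$, completing the induction.

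The only delicate point is the inductive reduction: one needs the hypothesis on $G/\gamma_k(G)$ precisely to guarantee that the quotient $\bar G$ inherits the same property, so that induction can proceed, and to guarantee that $\gamma_c(G)$ is torsion-free so that the final step $[h,g]^n = 1 \Rightarrow [h,g] = 1$ is valid. Everything else is routine.
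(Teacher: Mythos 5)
Your proof is correct and rests on the same two ingredients as the paper's own argument: the fact that $i \mapsto [h,g^i]$ is a homomorphism modulo the next term of the lower central series, and torsion-freeness of the lower-central quotients, after which you conclude via $C_{G}(H)\subset H$ (Remark~\ref{rem-cent}) exactly as the paper does. The only difference is organizational: the paper argues by contradiction, choosing the maximal $k$ with $[h,g]\in\gamma_k(G)$ and using injectivity of the induced map $\langle g\rangle \to \gamma_{k}(G)/\gamma_{k+1}(G)$ (Lemma~\ref{lemma:hom}), whereas you run an induction on the nilpotency class and reduce to the case of a central commutator, where $[h,g^n]=[h,g]^n$ --- the mathematical content is the same.
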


To prove the proposition, we need the following simple lemma:

\begin{lemma}\label{lemma:hom}
Let $g$, $h$ be elements of $G$ such that $k$ is the maximal number that $ [h, g] \in \gamma_k(G)$. Then there is a homomorphism $\psi: \big< g \big> \rightarrow G / \gamma_{k+1}(G)$, which maps $g^i$ to $[h, g^i]$.
\end{lemma}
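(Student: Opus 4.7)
The plan is to define $\psi(g^i) := [h, g^i] \bmod \gamma_{k+1}(G)$ and then to verify that this prescription is well-defined on $\langle g \rangle$ and multiplicative. Well-definedness is automatic: if $g^n = 1$ in $\langle g \rangle$, then $[h, g^n] = [h, 1] = 1$, so $\psi$ descends from $\ZZ$ to $\langle g \rangle$. The real content is the homomorphism identity $[h, g^{i+j}] \equiv [h, g^i] \cdot [h, g^j] \pmod{\gamma_{k+1}(G)}$.

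For this I would invoke the standard commutator identity $[h, xy] = [h, x] \cdot x[h, y]x^{-1}$, a direct two-line manipulation from the convention $[a,b] = aba^{-1}b^{-1}$ (compatible with the paper's $g^h = hgh^{-1}$). Specializing to $x = g^i$, $y = g^j$ rewrites $[h, g^{i+j}] = [h, g^i] \cdot g^i [h, g^j] g^{-i}$, so the desired congruence reduces to the claim that $g^i [h, g^j] g^{-i} \equiv [h, g^j] \pmod{\gamma_{k+1}(G)}$.

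The key point is that $[h, g^j] \in \gamma_k(G)$ for every $j$. I would establish this by an easy induction on $j$, using the same commutator identity together with normality of $\gamma_k(G)$ and the hypothesis $[h, g] \in \gamma_k(G)$. Once this is in hand, the elementary identity $g^i \cdot [h, g^j] \cdot g^{-i} \cdot [h, g^j]^{-1} = [g^i, [h, g^j]]$ places the discrepancy in $[G, \gamma_k(G)] = \gamma_{k+1}(G)$, which finishes the proof. There is no genuine obstacle — the lemma is a routine computation in the lower central series — and the only thing to get right is the direction of conjugation so that the signs respect the paper's convention. It is worth noting that the maximality of $k$ assumed in the hypothesis is never actually used in the argument; all one needs is $[h, g] \in \gamma_k(G)$.
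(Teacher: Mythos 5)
Your proof is correct and follows essentially the same route as the paper: the identity $[h,g^ig^j]=[h,g^i]\,[h,g^j]^{g^i}$ together with the observation that the conjugation discrepancy $[g^i,[h,g^j]]$ lies in $[G,\gamma_k(G)]=\gamma_{k+1}(G)$ because $[h,g^j]\in\gamma_k(G)$. You merely make explicit two points the paper leaves implicit (the induction showing $[h,g^j]\in\gamma_k(G)$ for all $j$, and the fact that only $[h,g]\in\gamma_k(G)$, not maximality of $k$, is used), both of which are accurate.
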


\begin{proof}
Follows directly from the formula $$ [h, g^i g^j] = [h, g^i] [h, g^j]^{g^i} = [h, g^i] [h, g^j] [g^i, [h, g^j]], 
$$
since $ [g^i, [h, g^j ]]$ is in $\gamma_{k+1}(G)$.
\end{proof}

Now we can prove the Proposition~\ref{prop:isol}.

\begin{proof}
Assume the opposite, then there exists an element $g \in H^{*} \setminus H$. That is $g^n \in H$ for some integer $n$. Since $C_{G}(H) = H$, there exists an element $h \in H$ such that $g$ does not commute with $h$. Since $G$ is nilpotent, there exists $k$ such that $[h,g] \in \gamma_k(G)$, and $[h,g] \notin \gamma_{k+1}(G)$. Note that $k \geq 1$. By Lemma~\ref{lemma:hom} there is a homomorphism $\psi: \big< g \big> \rightarrow \gamma_{k}(G) / \gamma_{k+1}(G)$, but since the torsion of $\gamma_{k}(G) / \gamma_{k+1}(G)$ is trivial, $\psi$ is injective. Since $g^n \in H$, it follows that $[h, g^n] = \psi(g^n) = 1$. But $g^n \ne 1$, because $g \ne 1$, and $G$ is torsion-free. It contradicts injectivity of $\psi$. 

\end{proof}

\subsection{Isomorphic finitely induced representations.}

We have the following criterion of isomorphism of irreducible monomial representations.

\begin{definition}\label{z-equiv}
We say that pairs $(H_1, \chi_1)$ and $(H_2, \chi_2)$ are {\it equivalent} if~${\ind_{H_1}^G(\chi_1)\simeq \ind_{H_2}^G(\chi_2)}$. We denote it as follows: $(H_1, \chi_1) \sim (H_2, \chi_2)$.
\end{definition}

\begin{proposition}\label{prop:twosubgroups}\cite[Proposition 4.10]{bel-gor}
Let~${(H_1,\chi_1)}$ and~${(H_2,\chi_2)}$ be two irreducible weight pairs. Then they are equivalent if and only if there exists an element $g \in G$ such that~${(H_2^*)^g=H_1^*}$ and ${\chi_1|_{H_2^g\cap H_1} = \chi_2^g|_{H_2^g\cap H_1}}$.

\end{proposition}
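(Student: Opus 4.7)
The plan is to reduce the statement to a two-subgroup generalization of Proposition~\ref{prop:end}, then use Schur irreducibility to pass from Hom-dimensions to isomorphism, and finally translate the resulting finite-index condition into the isolator condition $(H_2^*)^g = H_1^*$. The whole argument parallels the endomorphism analysis of Subsection~2.1, with the pair $(H_1,\chi_1),\,(H_2,\chi_2)$ in place of a single pair.

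First, I would establish the canonical isomorphism
$$
\Hom_G\big(\ind_{H_1}^G(\chi_1),\, \ind_{H_2}^G(\chi_2)\big) \;\simeq\; \bigoplus_{g \,\in\, H_2 \backslash S / H_1} \Hom_{H_2^g \cap H_1}\big(\chi_1\vert_{H_2^g \cap H_1},\; \chi_2^g\vert_{H_2^g \cap H_1}\big),
$$
where $S \subset G$ consists of those $g$ for which $H_2^g \cap H_1$ has finite index in both $H_1$ and $H_2$. The derivation is formally the same as in Proposition~\ref{prop:end}: Frobenius reciprocity rewrites the Hom space as $\Hom_{H_1}(\chi_1, \Res_{H_1}^G \ind_{H_2}^G(\chi_2))$, and Mackey's decomposition of the restriction along double cosets produces the summands above; only double cosets with finite-index intersection contribute, which pins down the set $S$.

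Next, since each $\ind_{H_i}^G(\chi_i)$ is irreducible and of countable dimension, the version of Schur's lemma recalled in Subsection~2.2 shows that this Hom space is either $0$ or $1$-dimensional, and is non-zero precisely when the two representations are equivalent. Thus $(H_1,\chi_1) \sim (H_2,\chi_2)$ if and only if some summand is non-zero, i.e.\ there exists $g \in S$ with $\chi_1\vert_{H_2^g \cap H_1} = \chi_2^g\vert_{H_2^g \cap H_1}$.

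Finally, I would identify $\{g \in G : g \in S\}$ with $\{g \in G : (H_2^*)^g = H_1^*\}$, using three standard facts about finitely generated nilpotent groups: (i) $H \subset H^*$ always has finite index; (ii) isolators commute with conjugation, $(H^g)^* = (H^*)^g$; (iii) if $A \subset B$ has finite index then $A^* = B^*$. If $g \in S$, applying (iii) to $H_2^g \cap H_1 \subset H_1$ and to $H_2^g \cap H_1 \subset H_2^g$, combined with (ii), gives $H_1^* = (H_2^g \cap H_1)^* = (H_2^g)^* = (H_2^*)^g$. Conversely, if $(H_2^*)^g = H_1^*$, then $H_1$ and $H_2^g$ both sit as finite-index subgroups of the common group $H_1^*$, so $H_2^g \cap H_1$ has finite index in each, i.e.\ $g \in S$. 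The main obstacle is the first step: verifying the two-subgroup Hom decomposition with the correct indexing set $S$ and checking that double cosets outside $S$ really contribute nothing; once that formula is in place, the Schur-irreducibility input and the isolator manipulations are routine.
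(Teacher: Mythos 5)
The paper offers no argument for this statement---it is imported verbatim from \cite[Proposition 4.10]{bel-gor}---so there is no internal proof to compare yours against; your architecture (two-subgroup Mackey decomposition, Schur irreducibility, isolator bookkeeping) is the natural one and your isolator facts (i)--(iii) and the Schur-lemma step are fine. The gap sits exactly where you flag the main obstacle: the indexing set of the Hom decomposition. Frobenius reciprocity $\Hom_G(\ind_{H_1}^G(\chi_1),\pi)\simeq\Hom_{H_1}(\chi_1,\Res_{H_1}\pi)$ followed by Mackey's formula gives
$$
\Hom_G\big(\ind_{H_1}^G(\chi_1),\,\ind_{H_2}^G(\chi_2)\big)\;\simeq\;\bigoplus_{g}\Hom_{H_1}\big(\chi_1,\ \ind_{H_2^g\cap H_1}^{H_1}(\chi_2^g\vert_{H_2^g\cap H_1})\big),
$$
and the $g$-summand vanishes precisely when $[H_1:H_2^g\cap H_1]=\infty$ (a finitely supported $H_1$-eigenvector must be supported on a finite $H_1$-orbit in $G/H_2$). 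The finiteness of $[H_2^g:H_2^g\cap H_1]$ plays no role here, so the correct indexing set carries a \emph{single} finite-index condition, exactly as in Definition~\ref{def:ss} and Proposition~\ref{prop:end}. Your symmetric set $S$ is strictly smaller, and the claim that cosets outside it contribute nothing is false for general induced representations: for $G=\ZZ^2$, $H_1=G$, $H_2=\ZZ\times 0$, $\chi_2=\chi_1\vert_{H_2}$, one has $\Hom_G(\ind_{H_2}^G(\chi_2),\ind_{H_1}^G(\chi_1))=\CC$ although $[H_1:H_2]=\infty$.

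This matters only in the ``only if'' direction, but there it is essential: a nonzero hom produces $g$ with $[H_1:H_2^g\cap H_1]<\infty$ and matching characters, and your facts (i)--(iii) then yield only the inclusion $H_1^*=(H_2^g\cap H_1)^*\subseteq(H_2^g)^*=(H_2^*)^g$; the reverse inclusion needs $[H_2^g:H_2^g\cap H_1]<\infty$, which is precisely what the decomposition does not supply. The repair is to use irreducibility a second time: a nonzero hom between the two irreducibles is an isomorphism, so $\Hom_G(\ind_{H_2}^G(\chi_2),\ind_{H_1}^G(\chi_1))\neq 0$ as well, and one must either (a) compose a hom supported on the double coset of $g$ with its inverse and compare supports with the identity to see that the two contributing double cosets correspond under $g\mapsto g^{-1}$, or (b) argue by symmetry that $H_1^*\subseteq(H_2^*)^g$ and $H_2^*\subseteq(H_1^*)^{g'}$ for some $g'$, so that $H_1^*\subseteq(H_1^*)^{gg'}$, and invoke Noetherianity of the finitely generated nilpotent group $G$ to force equality throughout. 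Without one of these patches your argument proves only containment of the isolators, not the stated equality $(H_2^*)^g=H_1^*$.
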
 

Let $Y$ be the set of irreducible weight pairs. Let $\Sigma$ be the set of all subgroups $H \subset G$ for which there exists a character $\chi : H \rightarrow \CC^*$ such that a pair $(H, \chi)$ belongs to $Y$. One has a natural surjective map $Y \rightarrow \Sigma$.

\medskip

Let us denote by $\sim_{f}$ the equivalence~\ref{z-equiv}, restricted on fibers of the map $Y \rightarrow \Sigma$. We denote the quotient by this equivalence by $Z_{H} = Y_{H} / \sim_{f}$, where $Y_{H}$ is a fiber of $Y$ over a subgroup $H \in \Sigma$. We denote by $Z$ the bundle over $\Sigma$ with fibres $Z_H$ over a subgroup $H \in \Sigma$.

Let us denote by $W$ the set of equivalence classes of irreducible weight pairs such that $(H_1, \chi_1)$ and $(H_2, \chi_2)$ belong to the same class if the pairs are equivalent, and $H_1^* = H_2^*$. Let us denote this equivalence by $\sim_{*}$. This equivalence relation on $Y$ naturally descends to the set of subgroups $\Sigma$. Subgroups $H_1$, $H_2 \in \Sigma$ belong to the same equivalence class if $H_1^* = H_2^*$. Let us denote the quotient og $\Sigma$ by this equivalence by $\Theta$. There is a natural surjective map $W \rightarrow \Theta$.




\medskip

\begin{corollary}\label{spaces}
We have the following commutative diagram:

\[\begin{tikzcd}
 Y \arrow{r} \arrow[swap]{d} & Z \arrow{r}{\varphi} \arrow[swap]{d} & W  \arrow{d} \\
 \Sigma \arrow{r}{\sim} & \Sigma \arrow{r} & \Theta 
\end{tikzcd} 
\]

\end{corollary}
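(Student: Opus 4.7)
The plan is to interpret the diagram as a sequence of quotients by successively coarser equivalence relations on $Y$ and to verify that each map is well defined and each square commutes. The substantive work reduces almost entirely to comparing three equivalence relations on $Y$: the fiberwise equivalence $\sim_f$, the equivalence $\sim_*$, and the full equivalence $\sim$ of Definition~\ref{z-equiv}.

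First, I would observe the containments
$$
\sim_f \ \subseteq \ \sim_* \ \subseteq \ \sim
$$
as relations on $Y$. The first containment is immediate: if $(H,\chi_1)\sim_f (H,\chi_2)$, then the two pairs are equivalent and share the same subgroup $H$, hence in particular $H^*=H^*$, so they are $\sim_*$-equivalent. The second containment is tautological. These containments let me construct the horizontal maps: the quotient $Y\twoheadrightarrow Z$ is by definition $Y/\!\sim_f$, and the map $\varphi:Z\to W$ is the unique map induced from $Y\to W=Y/\!\sim_*$ on the $\sim_f$-quotient, which exists precisely because $\sim_f$ refines $\sim_*$.

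Next I would construct the vertical maps. The map $Y\to\Sigma$ sending $(H,\chi)\mapsto H$ descends to a map $Z\to\Sigma$ because $\sim_f$ only relates pairs with identical subgroup, and it descends further to a map $W\to\Theta$ because $\sim_*$ only relates pairs with identical $H^*$. The bottom row is then the identity on $\Sigma$ followed by the quotient map $\Sigma\twoheadrightarrow\Theta$ that identifies subgroups with the same isolator $H^*$; the label $\sim$ on the first arrow simply records that no identification has yet taken place.

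Commutativity of both squares is then a direct verification on representatives: starting from $(H,\chi)\in Y$, the two ways around the left square both yield $H\in\Sigma$, and the two ways around the right square both yield the class of $H$ (equivalently, of $H^*$) in $\Theta$. The key feature I would emphasize is that the diagram encodes an iterated bundle structure on the moduli space: $Z\to\Sigma$ records, over each admissible subgroup, the equivalence classes of characters giving isomorphic induced representations, and the further quotient $Z\to W$ collapses fibers that happen to be related by the conjugation condition of Proposition~\ref{prop:twosubgroups} applied to different subgroups sharing an isolator.

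There is no real obstacle in this corollary: once the relations $\sim_f$, $\sim_*$, $\sim$ are arranged as above, everything is formal. The only place where a nontrivial input could conceivably intervene is in checking that $\sim_*$ is in fact an equivalence relation (transitivity requires that equivalence of weight pairs with a common $H^*$ compose consistently), but this is an immediate consequence of Proposition~\ref{prop:twosubgroups}, which characterizes equivalence in terms of conjugation of the $H^*$ together with a compatibility of characters on intersections.
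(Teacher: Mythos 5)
Your proposal is correct and matches the paper's (implicit) argument: the paper states this corollary without proof, treating it as an immediate formal consequence of the definitions of $\sim_f$, $\sim_*$, and the quotients $Z$, $W$, $\Theta$, which is exactly the verification you carry out. The only superfluous point is your appeal to Proposition~\ref{prop:twosubgroups} for transitivity of $\sim_*$; since $\sim_*$ is defined as the conjunction of two equivalence relations (equivalence of pairs and equality of isolators), it is automatically an equivalence relation.
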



\begin{definition}
A weight pair $(H', \chi')$ {\it extends} a weight pair $(H, \chi)$ if $H$ is a subgroup of $H'$ and $\chi' \vert_{H} = \chi$.
\end{definition}

\begin{proposition}\label{lem:isopairs}
Let $G$ be a finitely generated nilpotent group. Then for every representative of equivalent weight pairs $(H, \chi) \in W$ the set of equivalence classes of weight pairs $\varphi^{-1}\big( (H, \chi) \big)$ in $Z$ is finite.

\end{proposition}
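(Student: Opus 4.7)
The plan is to reduce the finiteness of $\varphi^{-1}((H,\chi))$ to the finiteness of a certain set of subgroups of $H^*$, and then to bound that set using the rigidity coming from irreducibility and from Proposition~\ref{prop:twosubgroups}.

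Unwinding the definitions, an element of $\varphi^{-1}((H,\chi))$ is represented by an irreducible weight pair $(H',\chi')$ with $(H')^* = H^*$ and $(H',\chi')\sim(H,\chi)$, modulo $\sim_f$. The first observation I would make is a transitivity reduction: if $(H',\chi'_1)$ and $(H',\chi'_2)$ are both equivalent to $(H,\chi)$, then by transitivity they are equivalent to each other, which on the same fiber $Y_{H'}$ is exactly $\sim_f$. Hence at most one $\sim_f$-class contributes for each subgroup $H'$, and $\varphi^{-1}((H,\chi))$ is in natural bijection with
\[
\mathcal S \;=\; \bigl\{H' \in \Sigma : (H')^* = H^*,\ \exists\,\chi'\text{ with }(H',\chi')\sim(H,\chi)\bigr\}.
\]
It therefore suffices to prove that $\mathcal S$ is finite.

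Every $H' \in \mathcal S$ is a finite-index subgroup of $H^*$: the condition $(H')^* = H^*$ combined with the standard isolator property for finitely generated nilpotent groups (the isolator of any subgroup has finite index over it) gives $[H^*:H'] < \infty$. Since a finitely generated group admits only finitely many subgroups of each given finite index, the remaining task is to bound $[H^*:H']$ uniformly over $H' \in \mathcal S$. Two rigidity constraints are available for this. First, Proposition~\ref{prop:twosubgroups} applied to $(H',\chi')\sim(H,\chi)$ furnishes an element $g \in N_G(H^*)$ together with the identity $\chi'|_{H^g\cap H'} = \chi^g|_{H^g\cap H'}$, so $\chi'$ is prescribed on the subgroup $H^g\cap H'$, which has index at most $[H^*:H]$ in $H'$. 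Second, irreducibility of $(H',\chi')$ combined with Theorem~\ref{theorem:schur-irr} forces $S(H',\chi') = H'$, so no element of $H^*\setminus H'$ may stabilize the restriction of $\chi'$ to a finite-index subgroup.

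The hard part will be converting these two constraints into a uniform bound on $[H^*:H']$. The natural route is to exhibit a finite-index subgroup $N \subseteq H^*$, depending only on the data $(H,\chi)$ and not on $H'$, such that every $H' \in \mathcal{S}$ contains $N$; a natural candidate is an intersection built from $H$, the kernel of a canonical extension of $\chi$ to an overgroup of $H$, and the commutator subgroup $[H^*,H^*]$. The argument would proceed by showing that dropping any element of $N$ from $H'$ would force either a violation of the character equality on $H^g\cap H'$ for every admissible $g$, or a violation of the condition $S(H',\chi')=H'$. Once such an $N$ is identified, $H'$ corresponds to a subgroup of the finite group $H^*/N$ and finiteness is immediate. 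An alternative route is induction on the nilpotency class of $G$: the abelian case is trivial, since an irreducible induced representation of an abelian group forces $H' = H = G$, and the inductive step would proceed modulo the center $Z(G)$, which lies in every $H'$ by Remark~\ref{rem-cent}.
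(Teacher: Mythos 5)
Your reduction is sound as far as it goes: the transitivity observation (at most one $\sim_f$-class contributes per subgroup $H'$, so it suffices to show the set $\mathcal S$ of subgroups is finite) and the observation that each $H'$ with $(H')^*=H^*$ has finite index in $H^*$ are both correct, and the first is in fact an implicit step the paper's own proof relies on. But the proof stops exactly where the content begins. You state that ``the hard part will be converting these two constraints into a uniform bound on $[H^*:H']$'' and then offer only unverified candidates: a subgroup $N$ ``built from'' $H$, a kernel, and $[H^*,H^*]$ --- with no check that such an $N$ has finite index (note that $[H^*,H^*]$ can be trivial, e.g.\ when $H^*$ is abelian, so the natural reading of your candidate need not have finite index at all) and no argument that every $H'\in\mathcal S$ contains it --- and an induction on nilpotency class whose inductive step is not described (passing to $G/Z(G)$ is delicate because $\chi'$ need not be trivial on $Z(G)$). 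Producing such an $N$ a priori is essentially as hard as the proposition itself; the paper's own examples (e.g.\ Lemma~\ref{lemma:fib3}, where the equivalent subgroups $\phi((a/m_1,\,e/m_1,\,d'm_1,\,e'm_1))$ are pairwise incomparable) show that no member of the family contains the others, so $N$ cannot be read off from any single comparison. This is a genuine gap, not a routine verification.

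The paper closes the gap by a different mechanism that avoids any uniform index bound. Writing $H=\langle h_1,\dots,h_k\rangle$ and $H'=\langle h_1^{n_1/m_1},\dots,h_k^{n_k/m_k}\rangle$ (possible because $(H')^*=H^*$), there are only finitely many admissible denominator tuples $(m_i)$ since each $h_i$ has only finitely many roots in $G$; for a fixed denominator tuple, an infinite set of numerator tuples $(n_1,\dots,n_k)\in\NN^k$ must contain two comparable ones (a Dickson's-lemma argument), and two comparable tuples yield two irreducible weight pairs one of which extends the other, which is impossible. To complete your argument you would need to supply a step of this well-quasi-ordering type, or else actually construct and verify your subgroup $N$.
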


\begin{proof}
 
Let $(H, \chi)$ be an element of $W$. Let us prove that there are finitely many equivalence classes of weight pairs $(H', \chi') \in Z$ such that $(H', \chi') \sim (H, \chi)$ and $H'^* = H^*$.

Since $G$ is finitely generated nilpotent group, $G$ is Noetherian. Then every subgroup $H \subset G$ is finitely generated. Let $H = \big< h_1, \dots, h_k \big>$. Since $H'^* = H^*$, there exist certain $n_i \in \NN$, $m_i \in \ZZ$ such that $H' = \big< h_1^{\frac{n_1}{m_1}}, \dots, h_k^{\frac{n_k}{m_k}} \big>$. For any element $h \in H$ there are finitely many $g \in G$ such that $g^r = h$ for some integer $r$, since for any subgroup $H$ its index in $H^*$ is finite. Thus, for $h_i$, $1 \leq i \leq k$, there are finitely many integer valued tuples $(m_1^j, \dots, m_k^j)$ such that $h_i^{\frac{1}{m_1^j}} \in G$ for all indices $i, j$. Let us fix a tuple $(m_1^j, \dots, m_k^j)$ from this finite set. Let us assume that there exist infinitely many natural valued tuples $(n_1^j, \dots, n_k^j)$ such that a subgroup $H' = \big< h_1^{\frac{n_1}{m_1}}, \dots, h_k^{\frac{n_k}{m_k}} \big>$ belongs to $\Sigma$, and there exists a character $\chi'$ of $H'$ that $(H', \chi') \in Z$ and $(H', \chi') \sim (H, \chi)$. 

We call two tuples $(n_1^i, \dots, n_k^i)$ and $(n_1^j, \dots, n_k^j)$ comparable if $n_l^i \leq n_l^j$ for all indices $1 \leq l \leq k$, or $n_l^i \geq n_l^j$ for all indices $1 \leq l \leq k$. We claim that in an infinite set of natural valued tuples there always exist two comparable tuples. Indeed, for any given tuple $(n^1_1, \dots, n^1_k)$ let us find a tuple $(n^2_1, \dots, n^2_k)$ which is incomparable with $(n^1_1, \dots, n^1_k)$. It means that there exists at least one (but not all) index $1 \leq l \leq k$ such that $n^2_l < n^1_l$ and other indices $1 \leq r \leq k$ that $n^2_r \geq n^1_r$. Let us construct the next tuple which is incomparable with the first one and the second one. Then there exists an index $1 \leq s \leq k$ which correspond to the value of the third tuple which is strictly smaller than corresponding value of the second tuple. This index $s$ either coincides with $l$ or does not. The set of different natural numbers $n_1, \dots, n_k$ such that $n_1 \leq n^1_1, \dots, n_k \leq n^1_k$ is finite. If $l \ne s$ then there exists an index with strictly smaller value than of the first tuple, and we choose it from the finite set of values. If $l = s$ and the corresponding value of the third tuple $(n^3_1, \dots, n^3_k)$ coincide in $l = s$ with the value of the second tuple, that is  $n^1_l < n^3_l = n_2^l$, then we proceed by induction on $k$. Since for this case the value in $l = s$ is fixed, we are now dealing with tuples of $(k-1)$ size. It is easy to check that for $k=2$ the claim is true : there is no infinite set of incomparable natural valued tuples of the form $(n_1^j, n_2^j)$, hence the base of induction is valid. 

Thus, in our infinite set of tuples $(n_1^j, \dots, n_k^j)$ which correspond to subgroups $ \big< h_1^{\frac{n^j_1}{m_1}}, \dots, h_k^{\frac{n^j_k}{m_k}} \big>$, there exist two comparable tuples $(n_1^j, \dots, n_k^j)$ and $(n_1^i, \dots, n_k^i)$. Let us denote them by $ H_i = \big< h_1^{\frac{n^i_1}{m_1}}, \dots, h_k^{\frac{n^i_k}{m_k}} \big>$ and $ H_j = \big< h_1^{\frac{n^j_1}{m_1}}, \dots, h_k^{\frac{n^j_k}{m_k}} \big>$. Then there exist characters $\chi_i$ of $H_i$ and $\chi_j$ of $H_j$ such that weight pairs $(H_i, \chi_i)$ and $(H_j, \chi_j)$ are in $Z$, and $(H_i, \chi_i) \sim (H_j, \chi_j)$. Without loss of generality let $(n_1^i, \dots, n_k^i) < (n_1^j, \dots, n_k^j)$, then the weight pair $(H_j, \chi_j)$ extends the weight pair $(H_i, \chi_i)$. It contradicts the fact that both of them are irreducible weight pairs. Then the set of tuples is finite, and correspondingly, the set of weight pairs $(H', \chi') \in Z$ such that $(H', \chi') \sim (H, \chi)$ and $H'^* = H^*$ is also finite.

\end{proof}

For a given weight pair $(H, \chi) \in Y$, if the quotient $G / N_{G}(H^{*})$ is non-trivial, then it is infinite. For any element $g \in G \setminus N_{G}(H^{*})$
the weight pair $(H^g, \chi^g)$ is irreducible and equivalent to the pair $(H, \chi)$. Since conjugation on irreducible weight pairs by elements of $G$ commutes with the mapping to a $\sim_{*}$-equivalent weight pair, we can take the consequent quotients of $Y$ by these equivalences. We denote by $X$ the quotient of $Y$ by this equivalences. 

The latter equivalence relation naturally descends to the set of subgroups~$\Sigma$. Subgroups $H_1$, $H_2 \in \Sigma$ belong to the same equivalence class if there exists an element $g \in G$ such that $(H_1^*)^g = H_2^*$. We denote by $\Xi$ the quotient by this equivalence of $\Theta$. There is a natural surjective map $X \rightarrow \Xi$.



\medskip

Thus, we have the following commutative diagram:

\[\begin{tikzcd}
 Y \arrow{r} \arrow[swap]{d} & Z \arrow{r}{\varphi} \arrow[swap]{d} & W \arrow{r} \arrow[swap]{d} & X \arrow{d} \\
 \Sigma \arrow{r}{\sim} & \Sigma \arrow{r} & \Theta \arrow{r} & \Xi
\end{tikzcd} 
\]

\subsection{Ranks of finitely generated nilpotent groups}

\begin{definition}
Let $G$ be a finitely generated nilpotent group.
For a given subgroup $H \subset G$ we inductively define $\mathrm{rk}_{1}(G):= \mathrm{rk}(H / H \cap [G, G])$, $\mathrm{rk}_{i+1} (H):= \mathrm{rk}((H \cap \gamma_{i}(G) ) / ( H \cap \gamma_{i+1}(G)))$.

\end{definition}

\begin{remark}

If $(H_1, \chi_1)$ and $(H_2, \chi_2)$ are equivalent irreducible weight pairs, then $\mathrm{rk}_{i}(H_1) = \mathrm{rk}_{i}(H_2) $ for all $i$.

\end{remark}

\begin{proof}

If $H_1^{*} = H_2^{*}$ then the ranks are clearly equal since ranks do not change if one restricts to a subgroup of a finite index.

Let $(H_{1}^{*})^{g} = H_2^{*}$ for some non-trivial $g \in G$. Let $h_1, \dots, h_n$ generate the quotient $(H_1 \cap \gamma_i(G)) / (H_1 \cap \gamma_{i+1}(G))$. Since $ (\big < h_1^g, \dots, h_n^g \big > \cap \gamma_i(G)) / (H_1 \cap \gamma_{i+1}(G))$ coincides with the quotient $(\big < h_1, \dots, h_n \big > \cap \gamma_i(G)) / (H_1 \cap \gamma_{i+1}(G))$, we have $\mathrm{rk}_{i}(H_1) = \mathrm{rk}_{i}(H_2) $ for all $i$.

\end{proof}

\section{Classification of irreducible weight pairs} \label{sect:classification}

Let $G$ be the group of upper triangular matrices of the fourth order with integer entries. We will classify all irreducible weight pairs $(H, \chi)$ such that $H \subset G$. 

\begin{proposition} 

[(i)] The ranks of subgroups of the group $G$ can be the following: $(0,0)$, $(0,1)$, $(0,2)$, $(1,0)$, $(1,1)$, $(1,2)$, $(2,0)$, $(2,1)$, $(2,2)$ and $(3,2)$. 

[(ii)] If $H \in \Sigma$, then ranks of $H$ can be the following: $(1,1)$, $(2,0)$, $(2,1)$, $(1,2)$, $(2,2)$ and $(3,2)$. 

\end{proposition}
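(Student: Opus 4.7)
The proof rests on the explicit lower central series of $G = U_4(\ZZ)$: $G/\gamma_1(G)$ is free abelian of rank $3$ generated modulo $\gamma_1(G)$ by $I+E_{12}$, $I+E_{23}$, $I+E_{34}$; $\gamma_1(G)/\gamma_2(G)$ is free abelian of rank $2$ generated by the images of $I+E_{13}$ and $I+E_{24}$; and $\gamma_2(G) = Z(G)$ is infinite cyclic on $I+E_{14}$. The only nontrivial matrix-unit brackets are $[E_{12},E_{23}] = E_{13}$, $[E_{23},E_{34}] = E_{24}$, $[E_{13},E_{34}] = E_{14}$, and $[E_{12},E_{24}] = E_{14}$. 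This gives at once $\mathrm{rk}_1(H) \le 3$ and $\mathrm{rk}_2(H) \le 2$.

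For part (i), the only nontrivial implication is $\mathrm{rk}_1(H) = 3 \Rightarrow \mathrm{rk}_2(H) = 2$, so the pairs ruled out are $(3,0)$ and $(3,1)$. Since $[G,\gamma_1(G)] \subset \gamma_2(G)$, commutation descends to a bilinear pairing $G/\gamma_1(G) \otimes G/\gamma_1(G) \to \gamma_1(G)/\gamma_2(G)$ whose image, by the brackets above, is full rank. Thus if $h_1, h_2, h_3 \in H$ span $G/\gamma_1(G)$ modulo torsion, the elements $[h_i, h_j] \in H \cap \gamma_1(G)$ span $\gamma_1(G)/\gamma_2(G)$ modulo torsion, so $\mathrm{rk}_2(H) = 2$. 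Each of the ten listed pairs is realized by a transparent example: $H = \langle I+E_{14} \rangle$ for $(0,0)$, $H = \gamma_1(G)$ for $(0,2)$, $H = \langle I+E_{12}, I+E_{34}, I+E_{14} \rangle$ for $(2,0)$, and $H = G$ for $(3,2)$; the others are analogous.

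For part (ii), Remark~\ref{rem-cent} yields $Z(G) = \gamma_2(G) \subset H$ and $C_G(H) \subset H$ for every $H \in \Sigma$, and I would rule out the four additional pairs $(0,0), (0,1), (0,2), (1,0)$ in turn. Case $(0,0)$ forces $H = Z(G)$, but $C_G(Z(G)) = G \not\subset H$. Cases $(0,1)$ and $(0,2)$ both require $H \subset \gamma_1(G)$; since $\gamma_1(G)$ is abelian, $\gamma_1(G) \subset C_G(H) \subset H$, forcing $H = \gamma_1(G)$ and excluding $(0,1)$ outright. For $H = \gamma_1(G)$ itself, the bracket relations above show that $I+E_{23}$ commutes with each generator of $\gamma_1(G)$, so $C_G(\gamma_1(G)) \supsetneq \gamma_1(G)$, excluding $(0,2)$. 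Case $(1,0)$ gives $H = \langle h \rangle \cdot Z(G)$ for some $h \notin \gamma_1(G)$; the well-defined homomorphism $\gamma_1(G)/\gamma_2(G) \to \gamma_2(G)$, $\overline g \mapsto [g,h]$, is a map $\ZZ^2 \to \ZZ$ and so has kernel of rank at least $1$, producing an element of $(\gamma_1(G) \cap C_G(h)) \setminus Z(G) \subset C_G(H) \setminus H$.

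The main obstacle is nothing more than careful bookkeeping: once the matrix-unit bracket table is set up, each exclusion is a short centralizer computation, and the case analysis is brief.
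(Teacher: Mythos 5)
Your proof is correct and takes essentially the same approach as the paper: part (i) reduces to showing that $\mathrm{rk}_1(H)=3$ forces $\mathrm{rk}_2(H)=2$ via commutators landing in $[G,G]/Z(G)$, and part (ii) rests entirely on the centralizer condition $C_{G}(H)\subset H$ of Remark~\ref{rem-cent} together with explicit computations in $U_4(\ZZ)$. You fill in several details the paper dismisses as ``easy to check'' --- the bilinearity/skew-form argument in (i) and the kernel of the map $\ZZ^2\to\ZZ$ in the $(1,0)$ case, which is precisely the paper's claim that $C_G(g)\supsetneq\langle g, Z(G)\rangle$ --- but the underlying strategy is identical.
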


\begin{proof}

[(i)] If $\mathrm{rk}_1(H) = 3$, then there are two generators of $H$ in $H \cap [G, G] / Z(G)$, so $\mathrm{rk}_2(H) = 2$. Thus, cases $(3,0)$ and $(3,1)$ are not possible.

[(ii)] It is easy to check that for any $g \in G$ the centralizer $C_{G}(g)~\supsetneq~\big<g, Z(G) \big>$. For an irreducible weight pair $(H, \chi)$ we have $S(H, \chi)~=~H$, and by Remark~\ref{rem-cent} for $g \in H$ a subgroup $\big<g, Z(G) \big> \subsetneq H$. Then there are always more than two generators in $H$. Thus, the cases $(0,0)$, $(0,1)$, and $(1,0)$ are not possible. The case of $(0,2)$ ranks is also not possible since the centralizer $C_{G}(H)$ coincides with the following subgroup of~$G$:
$$
\begin{pmatrix}
     1 & 0 &  \ZZ  &  \ZZ  \\
     0 & 1 &  \ZZ  &  \ZZ  \\
     0 & 0 & 1 & 0 \\
     0 & 0 & 0 & 1
\end{pmatrix},
$$
which is bigger than $H$. Then there exists a weight pair $(H', \chi')$ which extends the pair $(H, \chi)$. It contradicts the condition $S(H, \chi)~=~H$.
\end{proof}

Let us denote by $\Sigma_{r_1, r_2} $ the restriction of $\Sigma$ to the set of subgroups $H \subset G$ such that $\rk_1(H) =~r_1$, $\rk_2(H) = r_2$. The set $\Sigma$ is a disjoint union of sets $\Sigma_{r_1, r_2}$. Let us denote corresponding bundles $Y$, $Z$ restricted to $\Sigma_{r_1, r_2} $ by $Y_{r_1, r_2}$, $Z_{r_1, r_2}$, and similarly the bundle $X_{r_1, r_2} ~\rightarrow~\Xi_{r_1, r_2}$.

\medskip 
In this subsection we describe consecutively the set $\Sigma_{r_1, r_2} $, bundles $Y_{r_1, r_2}$, $Z_{r_1, r_2}$, and $X_{r_1, r_2} ~\rightarrow~\Xi_{r_1, r_2}$ for all possible ranks of subgroups of the set $\Sigma$ for the group $G$. 

\medskip

We denote by $\mu_{\infty}$ the union of groups of roots from unity.

\section{ \texorpdfstring{$\text{The case of }\mathrm{rk}_{1}(H) = 1, \ \mathrm{rk}_{2}(H) = 1.$}{Case 1}}
\bigskip


Let us define sets $S, S_1, S_2, S_3, S_4, N_1$, and $N_2$.

$$S = \{ (a, d, f, b, e) \in \ZZ^{5} \ \vert \ \mathrm{GCD} \,(\frac{f b - a e}{n}, a, d, f) = 1, \ \ n = \mathrm{GCD}(a, f) \},$$

$$S_1 = \{ (a, d, f, b, e) \in \ZZ^{5} \ \vert \ a \neq 0, \ d \neq 0, \ f \neq 0  \} \ \cap \ S,$$

$$S_2 = \{ (a, d, f, b, e) \in \ZZ^{5} \ \vert   \ a = 0, \ d \neq 0, \ f \neq 0 \}  \ \cap \ S,$$

$$S_3 = \{ (a, d, f, b, e) \in \ZZ^{5} \ \vert \ a \neq 0, \ d \neq 0, \ f = 0  \}  \ \cap \ S,$$

$$S_4 = \{ (a, d, f, b, e) \in \ZZ^{5} \ \vert \ a \neq 0, \ d = 0, \ f \neq 0 \}  \ \cap \ S,$$

$$N_1 = \{ (a, d, f, b, e) \in \ZZ^{5} \ \vert  \ a \neq 0, \ d = 0, \ f = 0, \ b = 1 \}  \ \cap \ S,$$ 

$$N_2 = \{ (a, d, f, b, e) \in \ZZ^{5} \ \vert \ a = 0, \ d = 0, \ f \neq 0, \ e = 1  \}  \ \cap \ S.$$

\begin{proposition}\label{prop:1}
There is a canonical bijection $\phi$ from $S_1 \ \cup \ S_2 \ \cup \ S_3 \ \cup \ S_4 \ \cup \ N_1  \ \cup \ N_2$ to $\Sigma_{1, 1}$. It maps a tuple $(a, d, f, b, e)$ to a subgroup $H$, generated by the following matrices $$
h_1 =
\begin{pmatrix}
     1 & a & b & 0 \\
     0 & 1 & d & e \\
     0 & 0 & 1 & f \\
     0 & 0 & 0 & 1
\end{pmatrix},
\quad
h_2 =
\begin{pmatrix}
     1 & 0 & \frac{a}{n} & 0 \\
     0 & 1 & 0 & \frac{f}{n} \\
     0 & 0 & 1 & 0 \\
     0 & 0 & 0 & 1
\end{pmatrix},
\quad
C =
\begin{pmatrix}
     1 & 0 & 0 & 1 \\
     0 & 1 & 0 & 0 \\
     0 & 0 & 1 & 0 \\
     0 & 0 & 0 & 1
\end{pmatrix},
$$
where $n = \mathrm{GCD}(a, f)$. Moreover, for every $H \in \Sigma_{1, 1}$, a subgroup $H$ is abelian.

We can extend the bijection $\phi$ to the map from $$ (\CC^{*})^2 \times \{ \CC^{*} \setminus \mu_{\infty} \} = \{ t, z \in (\CC^{*})^2, \ \lambda \notin \mu_{\infty} \}$$ to $Y_{1,1;\, H}$, which is 
defined as follows: $$ t = \chi(h_1) , \ \ z = \chi(h_2) , \ \  \lambda = \chi(C).$$
\end{proposition}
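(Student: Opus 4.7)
The plan is first to pin down the form of subgroups $H\in\Sigma_{1,1}$ using the constraint $C_G(H)\subseteq H$ from Remark~\ref{rem-cent}, and then to verify bijectivity at both the subgroup and the character levels. By Remark~\ref{rem-cent}, $\langle C\rangle=Z(G)\subseteq H$. Given $H\in\Sigma_{1,1}$, I will pick $h_1\in H$ lifting a primitive generator of the rank-one image $H/(H\cap[G,G])\subseteq\ZZ^3$ and $h_2\in H\cap[G,G]$ lifting a primitive generator of $(H\cap[G,G])/Z(G)\subseteq\ZZ^2$. Multiplying by appropriate powers of $C$ normalises the $(1,4)$-entries to zero, so that $h_1$ carries parameters $(a,d,f,b,e)$ with $(a,d,f)$ primitive and $h_2$ carries parameters $(p,q)$ with $\mathrm{GCD}(p,q)=1$; then $H=\langle h_1,h_2,C\rangle$.

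The crucial step is to show $H$ is abelian and to pin $(p,q)$ to $(a/n,f/n)$. A direct matrix computation gives $[h_1,h_2]=C^{aq-fp}$, so it suffices to prove $aq-fp=0$. Any element $g_0=I+\alpha E_{13}+\beta E_{24}$ of $[G,G]$ automatically commutes with $h_2$ and $C$, and commutes with $h_1$ iff $\alpha f=\beta a$; the integer solutions form the rank-one lattice $\ZZ\cdot(a/n,f/n)$ with $n=\mathrm{GCD}(a,f)$. Since $C_G(H)\subseteq H$, this lattice must be contained in the projection of $\langle h_2,C\rangle$ onto the $(1,3),(2,4)$-entries, forcing $(p,q)\in\ZZ(a/n,f/n)$; primitivity of $(p,q)$ then gives $(p,q)=\pm(a/n,f/n)$, and replacing $h_2$ by $h_2^{-1}$ yields the stated sign. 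In particular $aq-fp=0$ and $H$ is abelian. The case $(a,f)=(0,0)$ has to be excluded because then any $(\alpha,\beta)$ satisfies the centraliser condition, forcing $\rk_2(H)\ge 2$; this matches the absence of tuples with $a=f=0$ from the domain of $\phi$. The extra primitivity condition $\mathrm{GCD}((fb-ae)/n,a,d,f)=1$ expresses the isolation $H=H^*$, which is forced by Proposition~\ref{prop:isol} once $H$ is known to be abelian: a common divisor $r$ of these four integers would produce a non-trivial $r$-th root of an element of $H$ lying in $G\setminus H$, where the quantity $(fb-ae)/n$ is precisely the invariant controlling the $(1,4)$-part of $r$-th powers modulo $\langle h_2,C\rangle$. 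A case split on which of $a,d,f$ vanish yields the six sets $S_1,\ldots,S_4,N_1,N_2$; in $N_1$ and $N_2$ the remaining shift freedom $h_1\mapsto h_1 h_2^l$ allows one additional normalisation ($b=1$, respectively $e=1$) which fixes a canonical representative of the subgroup. Surjectivity of $\phi$ is immediate from this construction; injectivity follows from the rigidity of all the normalisations.

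For the character part, since $H$ is abelian and isomorphic to $\ZZ^3$, a character $\chi$ is determined by $(t,z,\lambda)=(\chi(h_1),\chi(h_2),\chi(C))\in(\CC^*)^3$. By Theorem~\ref{theorem:schur-irr} together with Proposition~\ref{prop:end}, $(H,\chi)$ is irreducible iff $S(H,\chi)=H$. The plan is to compute $S(H)$ explicitly in each of the six cases: the finite-index condition $[H:H^g\cap H]<\infty$ forces $g$ to normalise the subgroup $\langle h_1,h_2,C\rangle$ modulo $Z(G)$, and conjugation by such $g$ shifts the generators by $h_1\mapsto h_1 h_2^{j}C^{k}$, $h_2\mapsto h_2 C^{m}$ for integers $(j,k,m)$ that depend only on the case. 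The identity $\chi^g=\chi$ on $H^g\cap H$ then reads as a finite collection of equations of the form $z^{j}\lambda^{k}=1$ and $\lambda^{m}=1$; as $z$ is unconstrained in $\CC^*$, the only way to rule out every nontrivial $g\in S(H)\setminus H$ uniformly is $\lambda\notin\mu_\infty$. The hardest part will be this last explicit determination of $S(H)$ in each of the six subcases and the bookkeeping of the resulting root-of-unity relations; it is precisely the need for uniform behaviour that isolates the six sets $S_i,N_j$ as the domain of $\phi$.
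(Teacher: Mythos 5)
Your overall strategy matches the paper's: fix generators $h_1,h_2,C$, prove $H$ is abelian, invoke Proposition~\ref{prop:isol} to get $H=H^*$, translate isolation into the arithmetic condition defining $S$, and then read off the admissible characters from the action of $N_G(H)/H$. The one place where you genuinely diverge is the proof that $H$ is abelian and that $h_2$ has $(1,3)$- and $(2,4)$-entries $(a/n,f/n)$: you argue directly that the rank-one lattice of pairs $(\alpha,\beta)$ with $\alpha f=\beta a$ gives elements of $C_G(H)$ which must land in $\langle h_2,C\rangle$, and primitivity pins down $h_2$ and kills $[h_1,h_2]$. The paper instead argues through the character: a non-trivial $[h_1,h_2]$ would force $\chi(C)\in\mu_\infty$, which is then excluded because it would enlarge the centralizer/allow an extension of the pair. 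Your route is cleaner in that it settles the group-theoretic shape of $H$ before any character enters, and it correctly isolates why $a=f=0$ must be excluded; the paper's route has the advantage of simultaneously establishing, once and for all, that $\chi(C)\notin\mu_\infty$ is necessary, which you only recover later from the normalizer computation.

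Two points to fix before this becomes a proof. First, your normalisation ``$h_1$ carries parameters $(a,d,f,b,e)$ with $(a,d,f)$ primitive'' is wrong: the image of $H$ in $G/[G,G]\simeq\ZZ^3$ is generated by $(a,d,f)$, but nothing forces this vector to be primitive in $\ZZ^3$, and the paper's set $S$ deliberately allows $\gcd(a,d,f)>1$ provided $\mathrm{GCD}\big((fb-ae)/n,a,d,f\big)=1$ (e.g.\ $(a,d,f)=(2,2,2)$ with $b-e$ odd). If $(a,d,f)$ were always primitive, the defining condition of $S$ would be vacuous, contradicting your own later (correct) statement that this GCD condition is exactly the isolation condition; as written, your parametrisation would miss part of $\Sigma_{1,1}$. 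Second, the isolation computation — solving $g^{r_1}=h_1^{r_2}h_2^{r_3}$ and showing solvability is governed by divisibility of $fb-ae$ by $n$ — is the technical core of the bijection and occupies most of the paper's argument; your plan names the right invariant but defers the entire computation, so it remains to be carried out (the paper reduces it to the solvability of $r(n\tilde b-lb)=Ma'$, $r(n\tilde e-le)=Mf'$, i.e.\ to $n\mid(bf'-ea')$).
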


\begin{proof}

The proof goes as follows. First, we prove that if $H \in \Sigma_{1, 1}$, then a subgroup $H$ is abelian, and it is generated by $h_1, h_2, C$ in the form given in Proposition~\ref{prop:1}. Then we obtain conditions for generators $h_1, h_2$ that the corresponding subgroup $H$ can form an irreducible weight pair with some character $\chi$. Then we study characters $\chi_1, \chi_2: H \rightarrow \CC^{*}$ which correspond to equivalent irreducible weight pairs $(H, \chi_1)$ and $(H, \chi_2)$. Finally, we obtain conditions for a character $\chi$ that $(H, \chi)$ is an irreducible weight pair.

 Since $ \mathrm{rk}_{1}(H) = \mathrm{rk}_{2}(H) =  1$, we have one generator $h_1$ such that $ \big< h_1 \big> / \big( [G, G] \cap H\big)$ is not trivial. Let us denote by $h_2$ a generator such that $ \big< h_2 \big> / \big( [G, G] \cap H \big)$ is trivial and $ \big( \big< h_2 \big> \cap [G, G] \big) / Z(G)$ is not trivial. 
By Remark~\ref{rem-cent} the center $Z(G)$ is contained in $H$, hence we can choose generators $h_1$ and $h_2$ such that $ \big< h_1 \big> \cap Z(G)$ and $ \big< h_2 \big> \cap Z(G)$ are trivial.

If the commutator $[h_1, h_2]$, which is contained in the center of $G$, is not trivial, then $\chi( [h_1, h_2] ) = \chi(C)^N = 1$ for some integer $N$. But if $\chi(C)$ is a root of unity, then $\rk_2(C_{G}(H)) = 2$, since in this case both elements
$$
\begin{pmatrix}
     1 & 0 & k_1 & 0 \\
     0 & 1 & 0 & 0 \\
     0 & 0 & 1 & 0 \\
     0 & 0 & 0 & 1
\end{pmatrix},
\quad 
\begin{pmatrix}
     1 & 0 & 0 & 0 \\
     0 & 1 & 0 & k_2 \\
     0 & 0 & 1 & 0 \\
     0 & 0 & 0 & 1
\end{pmatrix}
$$
are contained in $H$ for some integers $k_1$ and $k_2$. By Remark~\ref{rem-cent} the centralizer $C_{G}(H) \subset H$, thus $\chi(C)$ is not a root of unity.

Hence, generators $h_1, h_2$ commute, and $H$ is abelian. Then it follows from Lemma~\ref{prop:isol} that $H = H^{*}$. 

Put
$$
h_1 =
\begin{pmatrix}
     1 & a & b & 0 \\
     0 & 1 & d & e \\
     0 & 0 & 1 & f \\
     0 & 0 & 0 & 1
\end{pmatrix}.
$$

If $a = f = 0$, then $\rk_2(C_{G}(H)) = 2$, but the centralizer $C_{G}(H) \subset H$ by Remark~\ref{rem-cent}. Then either $a$ or $f$ are non-zero.

Since generators $h_1, h_2$ commute, we obtain that $h_2^{k}$ is equal to
$$
\begin{pmatrix}
     1 & 0 & a & 0 \\
     0 & 1 & 0 & f \\
     0 & 0 & 1 & 0 \\
     0 & 0 & 0 & 1
\end{pmatrix}
$$
for some integer $k$.
If $a$ and $f$ are coprime then $h_2$ coincides with this element. If not, then since a subgroup $H$ is isolated, it has to contain all its roots. Hence
$$
h_2 =
\begin{pmatrix}
     1 & 0 & \frac{a}{n} & 0 \\
     0 & 1 & 0 & \frac{f}{n} \\
     0 & 0 & 1 & 0 \\
     0 & 0 & 0 & 1
\end{pmatrix},
$$
where $n = \mathrm{GCD}(a, f)$. Let us denote by $(a', f')$ the proportional coprime tuple~$( \frac{a}{n},  \frac{f}{n})$. 

Now we will obtain conditions for a subgroup $H$ to be an isolated subgroup. In order to do that we will study tuples $(a, d, f, b, e)$ which correspond to generators of subgroups $H$ such that $H \subset H^{*}$.

Let $g \in H^{*} \setminus H$. That means that there are such natural $r_1 > 1$ and integer $r_2$, $r_3$ that $g^{r_1} = h_{1}^{r_2} h_{2}^{r_3} $. Let 
$$
g =
\begin{pmatrix}
     1 & \tilde{a} & \tilde{b} & 0 \\
     0 & 1 & \tilde{d} & \tilde{e} \\
     0 & 0 & 1 & \tilde{f} \\
     0 & 0 & 0 & 1
\end{pmatrix}.
$$

That means that 

$$
\begin{pmatrix}
     1 & r_1 \tilde{a} & r_1\tilde{b} + \frac{r_1 (r_1 - 1)}{2} \tilde{a} \tilde{d}  & 0 \\
     0 & 1 & r_1 \tilde{d} & r_1\tilde{e} + \frac{r_1 (r_1 - 1)}{2} \tilde{f} \tilde{d} \\
     0 & 0 & 1 &r_1 \tilde{f} \\
     0 & 0 & 0 & 1
\end{pmatrix}
=
$$

$$
\begin{pmatrix}
     1 & r_2 a & r_2 b + \frac{r_2 (r_2 - 1)}{2} a d  & 0 \\
     0 & 1 & r_2 d & r_2 e + \frac{r_2 (r_2 - 1)}{2} f d \\
     0 & 0 & 1 &r_2 f \\
     0 & 0 & 0 & 1
 \end{pmatrix}    
 \times
\begin{pmatrix}
     1 & 0 & r_3 a' & 0 \\
     0 & 1 & 0 & r_3 f' \\
     0 & 0 & 1 & 0 \\
     0 & 0 & 0 & 1
\end{pmatrix}
$$

It implies that 

$$
\begin{pmatrix}
     r_1 \tilde{a}  \\
     r_1 \tilde{d}  \\
     r_1 \tilde{f} 
\end{pmatrix}
 =
 \begin{pmatrix}
     r_2 a  \\
     r_2 d  \\
     r_2 f
\end{pmatrix}
\text{  \ and \ }
\begin{pmatrix}
     r_1 \tilde{a}  \\
     r_1 \tilde{d}  \\
     r_1 \tilde{f} 
\end{pmatrix}
= l r_1 \begin{pmatrix}
    a''  \\
    d''  \\
      f'' 
\end{pmatrix},
$$
where $(a'', d'', f'')$ is a coprime tuple which is proportional to $(a, d, f)$. 

Then $r_1 l = r_2 n$ for some integer $l$. Let us notice that if $n = 1$ then it is easy to observe that $g$ is an element of $H$.

If $\mathrm{GCD} (l, n) $ is greater than $1$, then we may divide the equality $r_1 l = r_2 n$ by $\mathrm{GCD} (l, n)$ and obtain the new equality $r_1 l' = r_2 n'$ with 
$\mathrm{GCD} (l', n') = 1$. After that, we can replace a tuple $(l, n)$ with a proportional coprime tuple $(l', n')$. Since modulo this replacement it does not change the proof, we proceed with the case when $\mathrm{GCD} (l, n) = 1$.

We have $r_1 = n r$ and $r_2 = l r$ for some integer $r$. 

From $g^{r_1} = h_{1}^{r_2} h_{2}^{r_3} $ we obtain $$
r_1\tilde{b} + \frac{r_1 (r_1 - 1)}{2} l^2 a'' d'' = r_2 b + \frac{r_2 (r_2 - 1)}{2} n^2 a'' d'' + r_3 a', 
$$
$$
r_1\tilde{e} + \frac{r_1 (r_1 - 1)}{2} l^2 f'' d'' = r_2 e + \frac{r_2 (r_2 - 1)}{2} n^2 f'' d'' + r_3 f'.
$$

Then $$
r_1\tilde{b} -  r_2 b =  (\frac{r_2 (r_2 - 1)}{2} n^2 k d'' -  \frac{r_1 (r_1 - 1)}{2} l^2 k d''  + r_3) a', 
$$
$$
r_1\tilde{e} -  r_2 e =  (\frac{r_2 (r_2 - 1)}{2} n^2 k d'' -  \frac{r_1 (r_1 - 1)}{2} l^2 k d''  + r_3) f'.
$$ 

Let us denote the expression $(\frac{r_2 (r_2 - 1)}{2} n^2 k d'' -  \frac{r_1 (r_1 - 1)}{2} l^2 k d''  + r_3)$ by $M$. Then $M$ can be an arbitrary integer since $r_3$ can be an arbitrary integer.
Now we need to obtain the condition for the system of equations 

$$
r(n \tilde{b} - l b) = M a',
$$
$$
r(n \tilde{e} - l e) = M f'.
$$
to be solvable in $\tilde{b}, \tilde{e} \in \ZZ^2$, where integers $M, r$, and $l$ are coprime with $n$, and integers $b, e, n, a', f'$ are fixed.

The system is solvable if and only if $$
l (b f' - e a') = n (\tilde{b} f' - \tilde{e} a').
$$

Since $n$ and $l$ are coprime, the condition above is equivalent to the following one: $(b f' - e a')$ is divisible by $n$. Then $(\tilde{b} f' - \tilde{e} a') = l m $ for some integer $m$ and since $a'$ and $f'$ are coprime, there always exist integers $\tilde{b}$ and $\tilde{e}$ which satisfy the equation. 

Finally, we can see that a subgroup $H$ is isolated if and only if $H \in \phi(S)$.
 
\bigskip

Now for every $H \in \Sigma_{1, 1}$, we will describe the fiber $Y_{1, 1; \, H}$. We need to check that $Y_{1, 1; \, H}$ parametrizes all characters $\chi$ of $H$ which correspond to irreducible weight pairs $(H, \chi)$.

\medskip

{\bf (i)}. If $H \in \Sigma_{1, 1} \setminus ( \phi(N_1)  \ \cup \ \phi(N_2) \ \cup \ \phi(S_2)  \ \cup \ \phi(S_3))$ then the quotient $N_{G}(H) / H$ is generated by

$$
g_1 =
\begin{pmatrix}
     1 & a' & 0 & 0 \\
     0 & 1 & 0 & 0 \\
     0 & 0 & 1 & -f' \\
     0 & 0 & 0 & 1
\end{pmatrix}, 
\quad
g_2 =
\begin{pmatrix}
     1 & 0 & 0 & 0 \\
     0 & 1 & 0 & 1 \\
     0 & 0 & 1 & 0 \\
     0 & 0 & 0 & 1
\end{pmatrix}.
$$

The action of $g_1$ and $g_2$ on a character $\chi$ is as follows: 

\begin{equation} \label{eq:1notnormal}
\chi^{g_1}(h_1) = \chi(h_1) \chi(h_2)^{d} \lambda^{a'e + f'b+ a'f'd}, \ \ \  \chi^{g_1}(h_2) = \chi(h_2) \lambda^{2a'f'}, \\
\end{equation}
\begin{center}
and
\end{center}
$$\chi^{g_2}(h_1) = \chi(h_1) \lambda^{-a},  \ \ \  \   \chi^{g_2}(h_2) = \chi(h_2).$$

We can see that if $\chi(C) = \lambda$ is not a root of unity, then the action above is free, which means that $S(H, \chi) = H$ for a corresponding weight pair $(H, \chi)$.

If $H \in \phi(S_2) $, then the generator $g_2$ is replaced by

$$
g_2 =
\begin{pmatrix}
     1 & 0 & 1 & 0 \\
     0 & 1 & 0 & 0 \\
     0 & 0 & 1 & 0 \\
     0 & 0 & 0 & 1
\end{pmatrix}.
$$
 The action is as follows:
 
\begin{equation} \label{eq:1notnormal}
\chi^{g_1}(h_1) = \chi(h_1) \chi(h_2)^{d} \lambda^{f'b}, \ \ \  \chi^{g_1}(h_2) = \chi(h_2), \\
\end{equation}
\begin{center}
and
\end{center}
$$\chi^{g_2}(h_1) = \chi(h_1) \lambda^{f},  \ \ \  \   \chi^{g_2}(h_2) = \chi(h_2).$$

Again, we can see that if $\chi(C) = \lambda$ is not a root of unity, then the action above is free, which means that $S(H, \chi) = H$.

{\bf (ii)}. If $H \in (\phi(N_1)  \ \cup \  \phi(N_2))$, then it is easy to check that $N_{G}(H) = G$. Without loss of generality let us consider the case of a subgroup $H$ with $f = d = 0$. The other one with $a = d = 0$ is treated similarly. Since $H$ is isolated, its generators $h_1$ and $h_2$ may be chosen as follows: 
$$
h_1 =
\begin{pmatrix}
     1 & a & 0 & 0 \\
     0 & 1 & 0 & e \\
     0 & 0 & 1 & 0 \\
     0 & 0 & 0 & 1
\end{pmatrix}, 
\quad
h_2 =
\begin{pmatrix}
     1 & 0 & 1 & 0 \\
     0 & 1 & 0 & 0 \\
     0 & 0 & 1 & 0 \\
     0 & 0 & 0 & 1
\end{pmatrix}.
$$

Then the quotient $N_{G}(H) / H$ is generated by 
$$
g_1 =
\begin{pmatrix}
     1 & 0 & 0 & 0 \\
     0 & 1 & 1 & 0 \\
     0 & 0 & 1 & 0 \\
     0 & 0 & 0 & 1
\end{pmatrix}, 
\quad
g_2 =
\begin{pmatrix}
     1 & 0 & 0 & 0 \\
     0 & 1 & 0 & 1 \\
     0 & 0 & 1 & 0 \\
     0 & 0 & 0 & 1
\end{pmatrix}
$$
\begin{center}
and 
\end{center}
$$
g_3 =
\begin{pmatrix}
     1 & 0 & 0 & 0 \\
     0 & 1 & 0 & 0 \\
     0 & 0 & 1 & 1 \\
     0 & 0 & 0 & 1
\end{pmatrix}.
$$

The action of $g_1, g_2$ and $g_3$ on characters is as follows:

\begin{equation} \label{eq:1normal}
\begin{split}
\chi^{g_1}(h_1) = \chi(h_1) \chi(h_2)^{-a}, \ \ \  \chi^{g_1}(h_2) = \chi(h_2),  \\
\chi^{g_2}(h_1) = \chi(h_1) \chi(C)^{-a},  \ \ \  \   \chi^{g_2}(h_2) = \chi(h_2), \\
\chi^{g_3}(h_1) = \chi(h_1),  \ \ \  \   \chi^{g_3}(h_2) = \chi(h_2) \chi(C)^{-1}.
\end{split}
\end{equation}

Again, we can see that if $\chi(C) = \lambda$ is not a root of unity,  then the action above is free, which means that $S(H, \chi) = H$ and corresponding representations are irreducible.

\end{proof}

Let us consider the following action: $z \rightarrow z \lambda^n, \ n \in \ZZ$.

If $\lambda$ is a root of unity, then the quotient of $z \in \CC^{*}$ by this action is conformally equivalent to $\CC^{*}$.

In a case that $\lambda$ is not in $S^1$, then the quotient by this action is an elliptic curve, which we denote by $E_{\lambda} = \big< \CC^{*} / \lambda^n, \ n \in \ZZ \big>$. 

In a case that $\lambda \in S^1 \setminus \mu_{\infty}$, we denote the corresponding quotient by $P_{\lambda}$ (not separable).

Let both $\lambda$ and $z$ be not roots of unity. We denote by $T_{z, \lambda} = \big< \CC^{*} / \lambda^{n_1} z^{n_2}, \ n_1, n_2 \in \ZZ^2 \big>$ (not separable).

\medskip

Let us denote by $Z_{1, 1;\, H}^{(3)} = \{ \{ t, z \in (\CC^{*})^2, \ \lambda \notin \mu_{\infty} \} / \sim \}$, by $ Z_{1, 1;\, H}^{(2)} =  \{ \{ z \in \CC^{*}, \ \lambda \notin \mu_{\infty} \} / \sim \}$ and by $Z_{1, 1;\, H}^{(1)} = \{  \lambda \notin \mu_{\infty} \} $, where $\sim$ is defined in~\ref{z-equiv}.

\medskip

\begin{corollary}\label{corollary:fib1}

If $H \in \phi(S_1)$, then the fiber $Z_{1, 1;\, H}$ of  $Z_{1, 1}$ over a subgroup $H$ has iterated structure of a bundle, namely:
$$Z_{1, 1;\, H}^{(3)} \rightarrow Z_{1, 1;\, H}^{(2)} \rightarrow Z_{1, 1;\, H}^{(1)}.$$ We describe fibers of this bundle consecutively in coordinates $(t, z, \lambda)$. 

$$\{ T_{ {z^d \lambda^{a'e + f'b + a'f'd}, \ a} }\} \ , \ \{E_{\lambda^{2 a' f' }} \setminus \mu_{\infty}\} \ , \ \{\CC^* \setminus S^1\} \ \cup $$
$$ \{ T_{z^d \lambda^{a'e + f'b + a'f'd}, \ a}  \} \ , \ \{P_{\lambda^{2 a' f' }} \setminus \mu_{\infty}\} \ , \ \{S^1 \setminus \mu_{\infty}\} 
 \ \cup $$
 $$ \{ E_{\lambda^{\mathrm{GCD}(a'e + f'b + a'f'd, a)}} \} \ , \ \{ \mu_{\infty} \} \ , \ \{\CC^* \setminus S^1\} \ \ \cup \ \  \{ P_{\lambda^{\mathrm{GCD}(a'e + f'b + a'f'd, a)}} \} \ , \ \{ \mu_{\infty} \} \ , \ \{S^1 \setminus \mu_{\infty}\};$$
 
\medskip

If $H \in \phi(S_2)$, then the fibers $Z_{1, 1;\, H}^{(3)} \rightarrow Z_{1, 1;\, H}^{(2)} \rightarrow Z_{1, 1;\, H}^{(1)}$ over $H$ are canonically bijective to: 
$$ \{ T_{z^d \lambda^{f'b}, \lambda^{f}} \} \ , \ \{\CC^* \setminus S^1\} \ , \ \{\CC^* \setminus S^1\}  \ \cup \  \{ T_{z^d \lambda^{f'b}, \lambda^{f}} \} \ , \ \{S^1 \setminus \mu_{\infty}\} \ , \ \{\CC^* \setminus S^1\}  \ \cup $$
$$\{ E_{\lambda^{\mathrm{GCD}(f'b, f)}} \} \ , \ \{ \mu_{\infty} \} \ , \ \{\CC^* \setminus S^1\}  \ \cup \  \{ T_{z^d \lambda^{f'b}, \lambda^{f}} \} \ , \ \{\CC^* \setminus \mu_{\infty}\} \ , \ \{S^1 \setminus \mu_{\infty}\}
\ \cup $$
$$ \{ P_{\lambda^{\mathrm{GCD}(f'b, f)}}  \} \ , \ \{ \mu_{\infty} \} \ , \ \{S^1 \setminus \mu_{\infty}\};$$

 \medskip
 
If $H \in  \phi(S_3)$, then the fibers $Z_{1, 1;\, H}^{(3)} \rightarrow Z_{1, 1;\, H}^{(2)} \rightarrow Z_{1, 1;\, H}^{(1)}$ over $H$ are canonically bijective to: 
$$\{ T_{z^d \lambda^{a'e}, \lambda^{a}} \} \ , \ \{\CC^* \setminus S^1\} \ , \ \{\CC^* \setminus S^1\}  \ \cup \  \{ T_{z^d \lambda^{a'e}, \lambda^{a}} \} \ , \ \{S^1 \setminus \mu_{\infty}\} \ , \ \{\CC^* \setminus S^1\}  \ \cup $$
$$\{ E_{\lambda^{\mathrm{GCD}(a'e, a)}} \} \ , \ \{ \mu_{\infty} \} \ , \ \{\CC^* \setminus S^1\}  \ \cup \  \{ T_{z^d \lambda^{a'e}, \lambda^{a}} \} \ , \ \{\CC^* \setminus \mu_{\infty}\} \ , \ \{S^1 \setminus \mu_{\infty}\}
\ \cup $$
$$ \{ P_{\lambda^{\mathrm{GCD}(a'e, a)}} \} \ , \ \{ \mu_{\infty} \} \ , \ \{S^1 \setminus \mu_{\infty}\};$$

 \medskip
 
If $H \in \phi(S_4)$, then the fibers $Z_{1, 1;\, H}^{(3)} \rightarrow Z_{1, 1;\, H}^{(2)} \rightarrow Z_{1, 1;\, H}^{(1)}$ over $H$ are canonically bijective to:
$$\{E_{\lambda^{\mathrm{GCD}(a'e + f'b, a)}} \} \ , \ \{E_{\lambda^{2 a' f'}} \} \ , \ \{\CC^* \setminus S^1\}  \ \cup \  \{ P_{\lambda^{\mathrm{GCD}(a'e + f'b, a)}} \} \ , \ \{ P_{\lambda^{2 a' f'}} \} \ , \ \{S^1 \setminus \mu_{\infty}\};$$

 \medskip
 
If $H \in \phi(N_1)$, then the fibers $Z_{1, 1;\, H}^{(3)} \rightarrow Z_{1, 1;\, H}^{(2)} \rightarrow Z_{1, 1;\, H}^{(1)}$ over $H$ are canonically bijective to:
$$\{ T_{z^{a}, \lambda^{a}} \} \ , \ \{E_{\lambda} \setminus \mu_{\infty}\} \ , \ \{\CC^* \setminus S^1\}  \ \cup \  \{ E_{\lambda} \} \ , \ \{ \mu_{\infty}\} \ , \ \{\CC^* \setminus S^1\} 
 \ \cup $$
 $$ \{ T_{z^{a}, \lambda^{a}}  \} \ , \ \{ P_{\lambda} \setminus \mu_{\infty} \} \ , \ \{S^1 \setminus \mu_{\infty}\} \ \cup \  \{ P_{\lambda} \} \ , \ \{ \mu_{\infty} \} \ , \ \{S^1 \setminus \mu_{\infty}\}.$$
 
 \medskip
  
If $H \in \phi(N_2)$,  then the fibers $Z_{1, 1;\, H}^{(3)} \rightarrow Z_{1, 1;\, H}^{(2)} \rightarrow Z_{1, 1;\, H}^{(1)}$ over $H$ are canonically bijective to:
$$\{ T_{z^{f}, \lambda^{f}} \} \ , \ \{E_{\lambda} \setminus \mu_{\infty}\} \ , \ \{\CC^* \setminus S^1\}  \ \cup \  \{ E_{\lambda} \} \ , \ \{ \mu_{\infty}\} \ , \ \{\CC^* \setminus S^1\} 
 \ \cup $$
 $$ \{ T_{z^{f}, \lambda^{f}} \} \ , \ \{ P_{\lambda} \setminus \mu_{\infty} \} \ , \ \{S^1 \setminus \mu_{\infty}\} \ \cup \  \{ P_{\lambda} \} \ , \ \{ \mu_{\infty} \} \ , \ \{S^1 \setminus \mu_{\infty}\}.$$

\end{corollary}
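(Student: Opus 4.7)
The plan is to realize $Z_{1,1;\,H}$ as the quotient of $Y_{1,1;\,H} \simeq \{(t,z,\lambda)\in (\CC^*)^2\times(\CC^*\setminus\mu_\infty)\}$ by the induced action of $N_G(H)/H$ and then to compute this quotient stratum-by-stratum. By Proposition~\ref{prop:twosubgroups}, together with the fact that each $H\in\Sigma_{1,1}$ is isolated (Proposition~\ref{prop:isol}, already used in the proof of Proposition~\ref{prop:1}), two characters $\chi_1,\chi_2$ of the same $H$ produce $\sim_f$-equivalent weight pairs if and only if $\chi_2=\chi_1^g$ for some $g\in N_G(H)$. The quotient $N_G(H)/H$, its generators, and their explicit action on $(t,z,\lambda)=(\chi(h_1),\chi(h_2),\chi(C))$ are already worked out in the proof of Proposition~\ref{prop:1}: formula~\eqref{eq:1notnormal} (together with its variant generator $g_2$) covers $\phi(S_1),\phi(S_2),\phi(S_3),\phi(S_4)$, while~\eqref{eq:1normal} covers $\phi(N_1),\phi(N_2)$.

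Because $C$ is central, the coordinate $\lambda=\chi(C)$ is fixed by the whole action, which is the source of the claimed iterated bundle structure $Z^{(3)}_{1,1;\,H}\to Z^{(2)}_{1,1;\,H}\to Z^{(1)}_{1,1;\,H}$: the base parametrizes $\lambda\in\CC^*\setminus\mu_\infty$, the middle layer parametrizes $z$-orbits at fixed $\lambda$, and the top layer parametrizes $t$-orbits at fixed $(z,\lambda)$. The base $Z^{(1)}$ splits into $\CC^*\setminus S^1$ and $S^1\setminus\mu_\infty$ because a cyclic infinite-order translation $w\mapsto w\mu$ on $\CC^*$ has discrete orbits exactly when $|\mu|\neq 1$: the quotient is then the elliptic curve $E_\mu$, whereas on $S^1$ it is the non-separable space $P_\mu$. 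The passages $Z^{(2)}\to Z^{(1)}$ and $Z^{(3)}\to Z^{(2)}$ are, by the same principle applied to the induced $\ZZ^r$-actions on $z$ and on $t$, either torus-type quotients $T_{\cdot,\cdot}$ (when $r=2$ and both multipliers have infinite order in $\CC^*$) or $E_\cdot$- and $P_\cdot$-type quotients (when effectively $r=1$).

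To execute this for each stratum one first records, by induction from~\eqref{eq:1notnormal}, the iterated action $g_1^n\cdot(t,z) = \bigl(tz^{nd}\lambda^{nc_1+\binom{n}{2}dc_2},\,z\lambda^{nc_2}\bigr)$ with $c_1=a'e+f'b+a'f'd$ and $c_2=2a'f'$. For $H\in\phi(S_1)$ the $z$-quotient is then $\CC^*/\langle \lambda^{c_2}\rangle$, giving $E_{\lambda^{c_2}}$ or $P_{\lambda^{c_2}}$ according to the $\lambda$-stratum. On the generic $z$-orbit only $g_2$ fixes $z$, so the $t$-quotient is by the rank-$2$ lattice generated by multiplication by $z^d\lambda^{c_1}$ and by $\lambda^{-a}$, yielding $T_{z^d\lambda^{c_1},\,\lambda^a}$. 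On the $\mu_\infty$-substratum of the $z$-direction one uses that $z^N=1$ makes $g_1^N$ fix $z$, so the $t$-stabilizer becomes a rank-$2$ sublattice inside which the $z^d$-contributions collapse to roots of unity; an elementary-divisor computation in $\ZZ^2$ identifies the effective cyclic multiplier on $t$ as $\lambda^{\mathrm{GCD}(c_1,a)}$, and the fiber is $E_{\lambda^{\mathrm{GCD}(c_1,a)}}$ or $P_{\lambda^{\mathrm{GCD}(c_1,a)}}$. The cases $\phi(S_2),\phi(S_3),\phi(S_4)$ are handled by the same recipe after replacing the constants $(c_1,c_2)$ and the generator $g_2$ with the variants recorded in the proof of Proposition~\ref{prop:1}; for $\phi(N_1),\phi(N_2)$ the normalizer is the whole of $G$ and one works with the three generators $g_1,g_2,g_3$ from~\eqref{eq:1normal}, the new generator $g_3$ supplying the $E_\lambda$ or $P_\lambda$ factor in the $z$-direction.

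The main obstacle is purely combinatorial: for each of the six subsets of $\Sigma_{1,1}$ one has to cross-reference the two-strata decomposition of the $\lambda$-base with the two-strata decomposition (generic vs.\ $\mu_\infty$) of the $z$-layer, and then correctly identify the effective multiplier of the residual cyclic $t$-action on each degenerate stratum. The clean $\mathrm{GCD}$-expressions in the corollary arise from row-reduction on the rank-$2$ sublattice of $\ZZ^2$ stabilizing a root-of-unity $z$ together with its action on $t$, and are the only genuinely nontrivial piece of the calculation; everything else is direct substitution into the formulas for the $N_G(H)/H$-action already obtained in the proof of Proposition~\ref{prop:1}.
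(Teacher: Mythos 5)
Your proposal is correct and follows essentially the same route as the paper: the paper's own proof of this corollary is a single sentence stating that the fibers follow directly from the $N_G(H)/H$-action formulas (equations~\eqref{eq:1notnormal} and~\eqref{eq:1normal}) established in the proof of Proposition~\ref{prop:1}, which is exactly the computation you carry out. In fact you supply strictly more detail than the paper does (the iterated formula for $g_1^n$, the stratification by $|\lambda|$ and by $z\in\mu_\infty$, and the elementary-divisor reduction producing the $\mathrm{GCD}$ exponents), all of which is consistent with the stated fibers.
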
 

\begin{proof}

Follows directly from formulas of the action of $N_{G}(H) / H$ on a character $\chi$ of $H$ above (equations ~\ref{eq:1normal}, ~\ref{eq:1notnormal}).
\end{proof}

\medskip

\begin{lemma}\label{lemma:fib1}
If $H_1 \in \Sigma_{1, 1} \setminus ( \phi(N_1)  \ \cup \ \phi(N_2) )$, then the following subgroups $H_2 \in \Sigma_{1, 1}$ are equivalent to $H_1$:

$H_2  = \phi((a, d, f,  b +\tilde{d} f, e - \tilde{d} a))$ for an arbitrary integer $\tilde{d}$.

Fibers $Z_{1, 1; \, H_1}, Z_{1, 1;\, H_2} $ over subgroups $H_1, H_2$ can be canonically identified.

\end{lemma}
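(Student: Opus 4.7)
The plan is to invoke Proposition~\ref{prop:twosubgroups}. Since $H_1$ and $H_2$ both lie in $\Sigma_{1,1}$ and are therefore isolated, the general condition $(H_2^*)^g = H_1^*$ reduces to $H_2^g = H_1$, so it suffices to produce a single element $g \in G$ realizing this conjugation. The character condition of the proposition then becomes $\chi_1 = \chi_2^g$, the assignment $\chi_2 \mapsto \chi_2^g$ supplies a bijection of characters sending equivalent weight pairs to equivalent weight pairs, and after quotienting by the fiberwise equivalence $\sim_f$ this yields the announced canonical identification $Z_{1,1;\,H_1} \cong Z_{1,1;\,H_2}$.

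\textbf{Constructing the conjugator.} I try $g$ of the form $g = E_{12}(\beta)\,E_{23}(\alpha)\,E_{34}(\gamma)$, where $E_{ij}(t)$ denotes the elementary unipotent matrix with entry $t$ in position $(i,j)$. A direct computation yields
\[
(ghg^{-1})_{13} = h_{13} + g_{12}h_{23} - g_{23}h_{12}, \qquad (ghg^{-1})_{24} = h_{24} + g_{23}h_{34} - g_{34}h_{23},
\]
so conjugating the generator $h_1$ of $H_1$ by this $g$ replaces $(b,e)$ with $(b - a\alpha + \beta d,\; e + \alpha f - \gamma d)$ and modifies the central $(1,4)$-entry. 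Matching $gh_1g^{-1}$ against the product $h_1'\,h_2^{p}\,C^{q}$, where $h_1'$ denotes the generator of $H_2$ obtained from Proposition~\ref{prop:1}, leads to the linear system
\begin{align*}
-a\alpha + \beta d - p\,a' &= \tilde{d}\,f,\\
\alpha f - \gamma d - p\,f' &= -\tilde{d}\,a,
\end{align*}
together with a further compatibility constraint on $q$ from the $(1,4)$-entry.

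\textbf{Solvability and conclusion.} Eliminating $\alpha$ by multiplying the first equation by $f'$, the second by $a'$, adding, and using $af' = fa' = na'f'$ produces the single relation
\[
d(\beta f' + \gamma a') = n\bigl(2\alpha a'f' + \tilde{d}(a'^2 + f'^2)\bigr).
\]
Because $\gcd(a',f') = 1$, once the right-hand side is divisible by $d$ a Bezout argument yields integers $\beta$ and $\gamma$, and then $p$ is recovered from either of the original equations. The hypothesis $H_1 \notin \phi(N_1) \cup \phi(N_2)$ is what guarantees enough non-vanishing among $a, d, f$ to admit integral $\alpha$ for every $\tilde{d}$ via the arithmetic conditions built into the definition of $S$; in the excluded cases at least two of $a, d, f$ vanish simultaneously and the system degenerates. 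The remaining verifications are routine: $gh_2g^{-1} = h_2 \cdot C^{r}$ for some $r \in \mathbb{Z}$, because $h_2 \in [G,G]$ has trivial $(1,2),(2,3),(3,4)$-entries, and $gCg^{-1} = C$ by centrality of $C$. Hence $gH_1g^{-1} = H_2$ as subgroups of $G$, Proposition~\ref{prop:twosubgroups} supplies the equivalence of weight pairs, and the map $\chi_2 \mapsto \chi_2^g$ descends through $\sim_f$ to the required canonical identification of fibers. The main obstacle is proving integer solvability of the linear system above, which rests on the precise arithmetic constraints encoded in the definition of $S$.
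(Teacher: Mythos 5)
Your top-level strategy coincides with the paper's: since every $H\in\Sigma_{1,1}$ is isolated, Proposition~\ref{prop:twosubgroups} reduces equivalence of the pairs to exhibiting $g\in G$ with $H_2^g=H_1$, and the identification of fibers is then transport of characters by $\chi\mapsto\chi^g$. The execution, however, diverges in an important way. The paper does not solve for a conjugator hitting a prescribed target: it observes that $G/N_G(H_1)$ is cyclic, generated by the single elementary matrix $g=E_{23}(1)$, and conjugates $h_1$ by $g^{\tilde d}$ (the generators $h_2$ and $C$ are fixed up to central factors). That one computation produces the whole family of equivalent subgroups and, crucially, also shows the list is exhaustive, since any equivalence of isolated subgroups is a conjugation and conjugation by $N_G(H_1)$ fixes $H_1$ --- a completeness point your argument does not touch.

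The genuine gap is in your solvability step. First, the displayed ``single relation'' does not follow from the elimination you describe: after multiplying the two equations by $f'$ and $a'$ and adding, the $\alpha$-terms cancel (that is the point of the step), yet $\alpha$ reappears on your right-hand side where $p$ belongs, the sign of $\gamma a'$ is wrong, and the correct combination is $d(\beta f'-\gamma a')=2pa'f'+\tilde d\,n(f'^2-a'^2)$ rather than $n\bigl(2\alpha a'f'+\tilde d(a'^2+f'^2)\bigr)$. Second, even with the relation corrected, Bezout only supplies $\beta,\gamma$; you must still recover $\alpha$ integrally from the first equation, which imposes a divisibility by $a$ that you never verify, and the appeal to ``the arithmetic conditions built into the definition of $S$'' is never made precise. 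Third, and decisively, the system is genuinely unsolvable in some of the allowed cases: a direct expansion shows that conjugation by an arbitrary $g\in G$ changes the $(1,3)$-entry of $h_1$ exactly by $\beta d-\alpha a$ (plus multiples of $a'$ coming from $h_2^p$), so for $H_1=\phi((0,3,1,0,0))\in\phi(S_2)$ and $\tilde d=1$ your first equation forces $3\mid 1$. What conjugation by $E_{23}(\tilde d)$ actually yields is $(b,e)\mapsto(b-\tilde d a,\;e+\tilde d f)$, and the orbits of $(b,e)$ under $(b,e)\mapsto(b-\tilde d a,e+\tilde d f)$ and under $(b,e)\mapsto(b+\tilde d f,e-\tilde d a)$ differ modulo $(a',f')\ZZ$ already for $a=0$, $f\neq 0$. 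So the obstacle you flag at the end is a symptom that the tuple you are trying to reach is not in general a conjugate of $H_1$ as printed; the robust argument is the paper's one-line conjugation by the generator of $G/N_G(H_1)$, whose output should be compared carefully against the tuple appearing in the statement.
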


\begin{proof}
Since $H_1 =  H_{1}^{*}$ for $H_1 \in \Sigma_{1, 1}$, as proved in Proposition~\ref{prop:1}, we need to consider only subgroups $H_2$ such that~${(H_2)^g=H_1}$ and ${\chi_1|_{H_2^g\cap H_1} = \chi_2^g|_{H_2^g\cap H_1}}$ for some element $g\in G$. 

If $H_1 = \phi((a, d, f, b, e)) \in \Sigma_{1, 1} \setminus ( \phi(N_1)  \ \cup \ \phi(N_2))$, then $H_1$ is not normal and the quotient $G / N_{G}(H_1)$ is generated by  
$$
g = 
\begin{pmatrix}
     1 & 0 & 0 & 0 \\
     0 & 1 & 1 & 0 \\
     0 & 0 & 1 & 0 \\
     0 & 0 & 0 & 1
\end{pmatrix}.
$$
Then it is easy to compute the parameters of conjugated subgroups: $H_2~=~\phi((a, d, f,  b +\tilde{d} f, e - \tilde{d} a))$. If we denote non-central generators of $H_2$ by $\tilde{h}_1, \tilde{h}_2$, then characters of the subgroups are related as follows: $\chi_2(\tilde{h}_1) = \chi_{1}^{g}(h_1)$, $\chi_2(\tilde{h}_2) = \chi_{1}(h_2)$ and $\chi_2(C) = \chi_1(C)$. It gives a canonical identification of fibers $Z_{1, 1; \, H_1}$, $Z_{1, 1; \, H_2} $ over subgroups $H_1$ and $H_2$.

\end{proof}


\section{ \texorpdfstring{$\text{The case of }\mathrm{rk}_{1}(H) = 2, \ \mathrm{rk}_{2}(H) = 0.$}{Case 2}}
\bigskip


Let us define a set 
$$ \hspace*{-1cm}
S =  \{ (a, b, e, f',  b', e') \in \ZZ^{6} \ \vert \ ae' + f'b = 0, \ a \neq 0, \ f \neq 0, \   |\mathrm{GCD}(a, b, e) | = 1, \  \ |\mathrm{GCD}(f', b', e') | = 1 \} .$$

\medskip

\begin{lemma}

There is a canonical bijection $\phi$ from $S$ to $\Sigma_{2, 0}$. It maps a tuple $(a, b, e, f',  b', e')$ to a subgroup $H$, generated by the following matrices $$
h_1 =
\begin{pmatrix}
     1 & a & b & 0 \\
     0 & 1 & 0 & e \\
     0 & 0 & 1 & 0 \\
     0 & 0 & 0 & 1
\end{pmatrix},
\quad
h_2 =
\begin{pmatrix}
     1 & 0 & b'& 0 \\
     0 & 1 & 0 & e' \\
     0 & 0 & 1 & f' \\
     0 & 0 & 0 & 1
\end{pmatrix},
\quad
C =
\begin{pmatrix}
     1 & 0 & 0 & 1 \\
     0 & 1 & 0 & 0 \\
     0 & 0 & 1 & 0 \\
     0 & 0 & 0 & 1
\end{pmatrix}.
$$
Moreover, for every $H \in \Sigma_{2, 0}$, a subgroup $H$ is abelian.

We can extend the bijection $\phi$ to the map from $$(\CC^{*})^2 \times \{ \CC^{*} \setminus \mu_{\infty} \} = \{ (t, s, \lambda) \ \vert \ \lambda \notin \mu_{\infty} \}$$ to $Y_{2,0; \, \phi(\alpha)}$, which is 
defined as follows: $$ t = \chi(h_1) , \ \ s = \chi(h_2) , \ \  \lambda = \chi(C).$$

\end{lemma}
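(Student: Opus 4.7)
The proof follows the same three-stage template as Proposition~\ref{prop:1}. In the first stage I will show that any $H\in\Sigma_{2,0}$ is abelian. Given an irreducible weight pair $(H,\chi)$, pick generators $h_1,h_2$ whose images form a $\ZZ$-basis of $H/(H\cap[G,G])$. Since $\rk_2(H)=0$ the intersection $H\cap[G,G]$ lies in $Z(G)=\langle C\rangle$, so $[h_1,h_2]=C^N$ for some integer $N$. If $N\neq 0$ then $\chi(C)^N=\chi([h_1,h_2])=1$, forcing $\chi(C)\in\mu_\infty$; arguing as in the proof of Proposition~\ref{prop:1}, I will exhibit an element of $[G,G]\setminus H$ that lands in $S(H,\chi)$ whenever $\chi(C)$ is a root of unity, contradicting $C_G(H)\subset H$ from Remark~\ref{rem-cent}. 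Hence $[h_1,h_2]=1$, $H$ is abelian, and Proposition~\ref{prop:isol} gives $H=H^*$.

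Stage two derives the normal form of the generators together with the defining conditions of~$S$. Writing $h_i$ with super-diagonal entries $(a_i,d_i,f_i)$, commutativity forces the $(1,3)$- and $(2,4)$-components of the commutator to vanish, yielding $a_1d_2=a_2d_1$ and $d_1f_2=d_2f_1$. A nonzero $d_i$ would make the triples $(a_i,d_i,f_i)$ proportional, contradicting $\rk_1(H)=2$; hence $d_1=d_2=0$. Elementary $\ZZ$-linear changes of basis among $\{h_1,h_2,C\}$ then kill the $(3,4)$-entry of $h_1$, the $(1,2)$-entry of $h_2$, and normalize the $(1,4)$-entries to zero, yielding the asserted shape. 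The relation $ae'+bf'=0$ is precisely the vanishing of the remaining $(1,4)$-component of $[h_1,h_2]$. The non-vanishing $a\neq 0$ and $f'\neq 0$ follows from $C_G(H)\subset H$: if $a=0$, the elementary matrix with a free $(2,4)$-entry alone centralizes both generators, putting a nontrivial element of $[G,G]$ into $C_G(H)\setminus H$, and symmetrically for $f'$. Finally, the coprimality conditions $|\gcd(a,b,e)|=|\gcd(f',b',e')|=1$ encode isolation: a common divisor $n>1$ of one of the triples would produce an $n$-th root of $h_1 C^k$ (resp.\ $h_2 C^k$) for a suitable $k$, whose image in $\ZZ^3=G/[G,G]$ falls outside the lattice generated by $h_1,h_2$ and hence cannot lie in $H$. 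Injectivity of $\phi$ is visible from the normal form, and surjectivity is the content of this normalization procedure.

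Stage three handles the character map. Since $H=\langle h_1,h_2,C\rangle$ is free abelian of rank three, every character $\chi:H\to\CC^*$ is uniquely determined by the triple $(t,s,\lambda)=(\chi(h_1),\chi(h_2),\chi(C))\in(\CC^*)^3$, giving the parameterization. The claim that $\lambda\notin\mu_\infty$ suffices for $(H,\chi)\in Y_{2,0;\phi(\alpha)}$ will be reduced by Theorem~\ref{theorem:schur-irr} to verifying $S(H,\chi)=H$: I will compute generators of $N_G(H)/H$ and coset representatives of $G/N_G(H)$, and check that each of them acts on $\chi$ by multiplying $t$ or $s$ by a nontrivial integer power of $\lambda$, which is a free action precisely when $\lambda$ is not a root of unity. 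Necessity of $\lambda\notin\mu_\infty$ is recovered from Stage~1. The main obstacle I anticipate is the isolation argument in Stage~2: translating ``no nontrivial $n$-th roots of products $h_1^s h_2^t C^k$ exist in $G\setminus H$'' into the clean coprimality conditions requires careful bookkeeping of the binomial $(1,4)$-correction in the power formula across both generators simultaneously, under the coupling constraint $ae'+bf'=0$.
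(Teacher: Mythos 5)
Your overall architecture --- abelianness of $H$ forced by $\chi(C)\notin\mu_\infty$, isolation via Proposition~\ref{prop:isol}, reduction to a normal form, and then the computation of the $N_G(H)/H$-action on characters --- is the same as the paper's, and your Stages~1 and~3 match the paper's argument essentially step for step. The problem is in Stage~2, at the sentence claiming that ``elementary $\ZZ$-linear changes of basis among $\{h_1,h_2,C\}$ kill the $(3,4)$-entry of $h_1$ [and] the $(1,2)$-entry of $h_2$.'' Once you know $d_1=d_2=0$, the images of $h_1,h_2$ in $G/[G,G]$ span a rank-two sublattice $\Lambda$ of the $(a,f)$-plane $\ZZ^2$, and what you are asserting is that $\Lambda$ admits a basis with one vector on each coordinate axis, i.e.\ that $\Lambda=\bigl(\Lambda\cap(\ZZ\times 0)\bigr)\oplus\bigl(\Lambda\cap(0\times\ZZ)\bigr)$. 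Integral row operations (Hermite normal form) only produce a triangular basis, and a rank-two sublattice of $\ZZ^2$ need not split in this way: $\langle(1,1),(0,2)\rangle$ is an index-two sublattice whose intersections with the axes are $2\ZZ\times 0$ and $0\times 2\ZZ$, which generate only an index-four sublattice. So this step requires a separate argument, and it is exactly the point where the coupling between the $(a,f)$-data and the $(b,e)$-data enters; it is not supplied by isolation or by $C_G(H)\subset H$.

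In fact the step cannot be repaired as stated. Take
$$
h_1=\begin{pmatrix}1&1&0&0\\0&1&0&0\\0&0&1&1\\0&0&0&1\end{pmatrix},\qquad
h_2=\begin{pmatrix}1&0&1&0\\0&1&0&1\\0&0&1&2\\0&0&0&1\end{pmatrix}.
$$
These commute (the $(1,4)$-entry of the commutator is $1\cdot 1+0\cdot 2-0\cdot 0-1\cdot 1=0$), and a direct check shows that $H=\langle h_1,h_2,C\rangle$ is an isolated abelian subgroup with $\rk_1(H)=2$, $\rk_2(H)=0$, $C_G(H)=H$, and $S(H,\chi)=H$ for every character with $\chi(C)\notin\mu_\infty$ (for $g$ with vanishing $(2,3)$-entry one computes $\chi^g(h_1)=\chi(h_1)\lambda^{\,b_g-e_g}$ and $\chi^g(h_2)=\chi(h_2)\lambda^{\,a_g+2b_g-f_g}$, and these exponents vanish simultaneously exactly on $H$; elements with nonzero $(2,3)$-entry intersect $H$ in $Z(G)$ and so lie outside $S(H)$). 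Hence $H\in\Sigma_{2,0}$, yet its $(a,f)$-lattice is $\langle(1,1),(0,2)\rangle$, so $H$ has no generating set of the shape demanded by $\phi$. The paper's own proof passes over this reduction silently (``Then generators $h_1$ and $h_2$ have the following form\dots''), so you have reproduced a gap already present there rather than introduced a new error of strategy; but your explicit justification by elementary basis changes is where the argument visibly breaks, and the surjectivity of $\phi$ --- hence the bijection claimed in the lemma --- is what is at stake.
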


\begin{proof}

The proof goes as follows. First, we prove that if $H \in \Sigma_{2, 0}$, then $H$ is generated by $h_1, h_2, C$ in the form above. After that, we prove that $H$ is abelian. After that, we obtain conditions for $h_1, h_2$ that the corresponding subgroup $H$ is isolated. Then we study characters $\chi_1, \chi_2: H \rightarrow \CC^{*}$ which correspond to equivalent irreducible weight pairs $(H, \chi_1)$ and $(H, \chi_2)$. Finally, we obtain conditions for a character $\chi$ that $(H, \chi)$ is an irreducible weight pair.

Let us denote by $h_1, h_2$ two generators of $H / H \cap [G,G]$. Since $\mathrm{rk}_{2}(H) = 0$, it follows that the commutator $[h_1, h_2]$ is in the center of $G$. If $\chi(C)$ is a root of unity, then we can extend a weight pair $(H, \chi)$ to $(H', \chi')$ with the rank $\rk_2(H') = 2$. Hence if $\chi(C)$ is a root of unity, the pair $(H, \chi)$ is not irreducible. Then $\chi(C)$ is not a root of unity, and  $h_1$ and $h_2$ commute.
Then generators $h_1$ and $h_2$ have the following form:

$$
h_1 =
\begin{pmatrix}
     1 & a & b & 0 \\
     0 & 1 & 0 & e \\
     0 & 0 & 1 & 0 \\
     0 & 0 & 0 & 1
\end{pmatrix}, 
\quad
h_2 =
\begin{pmatrix}
     1 & 0 & b' & 0 \\
     0 & 1 & 0 & e' \\
     0 & 0 & 1 & f' \\
     0 & 0 & 0 & 1
\end{pmatrix}
$$
such that  $ae' + f'b = 0$. The last condition follows from the equality $[h_1, h_2]  = 1$.

Since $H$ is abelian, then by Lemma~\ref{prop:isol} it is isolated. 

The conditions for generators $h_1$ and $h_2$ that $H$ is isolated are easy to compute in this case. They are the following ones: $\mathrm{GCD}(a,b,e) = 1$ and $\mathrm{GCD}(f',b',e') = 1$.

\bigskip

Now for every $H \in \Sigma_{2, 0}$, we need to describe the fiber $Y_{2, 0; \, H}$.

For all subgroups $H \in \Sigma_{2, 0}$, the quotient $N_{G}(H) / H$ is generated by

$$
g_1 =
\begin{pmatrix}
     1 & 0 & 1 & 0 \\
     0 & 1 & 0 & 0 \\
     0 & 0 & 1 & 0 \\
     0 & 0 & 0 & 1
\end{pmatrix}, 
\quad
g_2 =
\begin{pmatrix}
     1 & 0 & 0 & 0 \\
     0 & 1 & 0 & 1 \\
     0 & 0 & 1 & 0 \\
     0 & 0 & 0 & 1
\end{pmatrix}
$$

The action of $g_1, g_2$ on a character $\chi$ is as follows:

\begin{equation}\label{eq:2}
\begin{split}
\chi^{g_1}(h_1) = \chi(h_1) \lambda^{f'}, \ \ \  \chi^{g_1}(h_2) = \chi(h_2) \\
\chi^{g_2}(h_1) = \chi(h_1),  \ \ \  \   \chi^{g_2}(h_2) = \chi(h_2) \lambda^{-a}.
\end{split}
\end{equation}

We can see that the action above is free if $\chi(C) = \lambda$ is not a root of unity, which means that corresponding representations are irreducible.

\end{proof}

Let us denote by $Z_{2, 0;\, H}^{(2, 1)} =  \{ \{ t \in \CC^*, \ \lambda \notin \mu_{\infty} \} / \sim\}$, by $ Z_{2, 0;\, H}^{(2, 2)} =  \{ \{ s \in \CC^{*}, \ \lambda \notin \mu_{\infty} \} / \sim \}  $ and by $Z_{2, 0;\, H}^{(1)} = \{  \lambda \notin \mu_{\infty} \} $, where $\sim$ is defined in~\ref{z-equiv}.

\medskip

\begin{corollary}\label{corollary:fib2}

If $H \in \Sigma_{2, 0}$, then the fiber $Z_{2, 0;\, H}$ of  $Z_{2, 0}$ over a subgroup $H$ has the following  iterated structure of a bundle, namely:
$$Z_{2, 0;\, H}^{(2, 1)}  \rightarrow Z_{2, 0;\, H}^{(1)} \ \ , \ \  Z_{2, 0;\, H}^{(2, 2)}  \rightarrow Z_{2, 0;\, H}^{(1)}.$$ We describe fibers of these bundles consecutively in coordinates $(t, s, \lambda)$.

$$ \{ E_{\lambda^{f'}} \} \ , \ \{ E_{\lambda^a} \} \ , \ \{\CC^* \setminus S^1\} \ \cup \ \{ P_{\lambda^{f'}} \} \ , \ \{ P_{\lambda^a} \} \ , \ \{S^1 \setminus \mu_{\infty} \}.$$

\end{corollary}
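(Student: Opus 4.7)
The plan is to carry out the quotient construction dictated by the action formulas~(\ref{eq:2}) explicitly. First I would observe that the generator $g_1$ acts trivially on $s$ while $g_2$ acts trivially on $t$, so the action of $N_G(H)/H$ on $Y_{2,0;\,H}$ splits as a direct product of an action on the $t$-coordinate and an independent action on the $s$-coordinate, both with $\lambda$ held fixed. This decoupling is precisely what produces the two parallel bundles $Z^{(2,1)}_{2,0;\,H} \to Z^{(1)}_{2,0;\,H}$ and $Z^{(2,2)}_{2,0;\,H} \to Z^{(1)}_{2,0;\,H}$ rather than the three-step tower encountered in Corollary~\ref{corollary:fib1}.

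Next, for each fixed $\lambda \in \CC^* \setminus \mu_{\infty}$, I would identify the fiber of the first bundle over $\lambda$ with the quotient of $\CC^*_t$ by the cyclic action $t \mapsto t\lambda^{f' n}$, $n \in \ZZ$, and the fiber of the second bundle with the quotient of $\CC^*_s$ by $s \mapsto s\lambda^{-a n}$, $n \in \ZZ$. Because $f' \neq 0$ and $a \neq 0$ by the definition of the set $S$, and because $\lambda$ is not a root of unity, the multipliers $\lambda^{f'}$ and $\lambda^a$ themselves lie in $\CC^* \setminus \mu_{\infty}$, so the quotients fit the templates $E_\bullet$ and $P_\bullet$ introduced before Corollary~\ref{corollary:fib1}.

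I would then split the base $Z^{(1)}_{2,0;\,H} = \CC^* \setminus \mu_{\infty}$ into its two natural strata. Over $\lambda \in \CC^* \setminus S^1$ both $|\lambda^{f'}|$ and $|\lambda^a|$ differ from $1$, so the two fibers are the elliptic curves $E_{\lambda^{f'}}$ and $E_{\lambda^a}$. Over $\lambda \in S^1 \setminus \mu_{\infty}$ the multipliers again lie in $S^1 \setminus \mu_{\infty}$, so the two fibers are the non-separable quotients $P_{\lambda^{f'}}$ and $P_{\lambda^a}$. Gathering the two strata yields the displayed description.

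The only technical point I would want to verify is the implication that $\lambda \notin \mu_{\infty}$ forces $\lambda^{f'}, \lambda^a \notin \mu_{\infty}$; this is genuine only because $a \neq 0$ and $f' \neq 0$ are built into the definition of $S$. Beyond this, the argument is direct bookkeeping from~(\ref{eq:2}), and I do not anticipate any substantial obstacle.
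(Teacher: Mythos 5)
Your proposal is correct and follows the same route as the paper, whose proof of this corollary consists precisely of reading off the quotient structure from the action formulas~(\ref{eq:2}). The explicit points you spell out --- the decoupling of the $t$- and $s$-actions, the identification of the multipliers $\lambda^{f'}$ and $\lambda^{-a}$, and the stratification of the base by $\lambda \in \CC^* \setminus S^1$ versus $\lambda \in S^1 \setminus \mu_{\infty}$ --- are exactly the bookkeeping the paper leaves implicit.
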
 

\begin{proof}
Follows directly from $N_{G}(H) / H$ action on a character $\chi$ of $H$ above (equations ~\ref{eq:2}).
\end{proof}

\medskip

\begin{lemma}\label{lemma:fib2}
If $H_1 \in  \Sigma_{2, 0}$, then the following subgroups $H_2 \in \Sigma_{2, 0}$ are equivalent to $H_1$:

$H_2 = \phi((a, b - a \tilde{d}, e, f, b', e' + f' \tilde{d}))$ for an arbitrary integer $\tilde{d}$.

Fibers $Z_{2, 0; \, H_1}, Z_{2, 0;\, H_2} $ over subgroups $H_1, H_2$ can be canonically identified.
\end{lemma}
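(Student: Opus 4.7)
The plan is to apply Proposition~\ref{prop:twosubgroups} directly. By the preceding proposition, every $H \in \Sigma_{2,0}$ is abelian, so Proposition~\ref{prop:isol} yields $H^* = H$. Consequently the equivalence $(H_1, \chi_1) \sim (H_2, \chi_2)$ of irreducible weight pairs with both $H_i \in \Sigma_{2,0}$ reduces to the existence of $g \in G$ satisfying $H_2^g = H_1$ and $\chi_2^g|_{H_1} = \chi_1|_{H_1}$. So the lemma splits into two tasks: first, understand the conjugation action of $G$ on $H_1$; second, read off the induced identification of characters.

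For the conjugation step, I would enumerate the six elementary matrices generating $G$ and check which lie in $N_G(H_1)$. A short computation shows that each of $e_{12}, e_{13}, e_{14}, e_{24}, e_{34}$ conjugates $h_1$ and $h_2$ into themselves modulo the central subgroup $\langle C\rangle \subset H_1$, hence lies in $N_G(H_1)$. The only elementary matrix outside the normalizer is $e_{23}$: since $a \neq 0$ for every $H_1 \in \Sigma_{2,0}$, conjugation by $e_{23}^{\tilde d}$ modifies the $(1,3)$-entry of $h_1$ by $-a\tilde d$, which is not central. Hence $G/N_G(H_1)$ is the infinite cyclic group generated by the class of $e_{23}$, and every $H_2 \in \Sigma_{2,0}$ conjugate to $H_1$ has the form $e_{23}^{\tilde d} H_1 e_{23}^{-\tilde d}$ for some $\tilde d \in \ZZ$.

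Setting $g = e_{23}^{\tilde d}$, direct matrix multiplication yields $g h_1 g^{-1}$ equal to $h_1$ with $b$ replaced by $b - a\tilde d$, $g h_2 g^{-1}$ equal to $h_2$ with $e'$ replaced by $e' + f'\tilde d$, and $g C g^{-1} = C$. Therefore $g H_1 g^{-1} = \phi\bigl((a, b - a\tilde d, e, f', b', e' + f'\tilde d)\bigr) = H_2$, which establishes the first claim. For the canonical identification of fibers, take $g' = g^{-1}$, so $H_2^{g'} = H_1$. The condition $\chi_1 = \chi_2^{g'}|_{H_1}$ from Proposition~\ref{prop:twosubgroups} evaluated on the generators of $H_1$ reads $\chi_2(g h_i g^{-1}) = \chi_1(h_i)$ for $i = 1, 2$ and $\chi_2(C) = \chi_1(C)$. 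Since $(g h_1 g^{-1}, g h_2 g^{-1}, C)$ are precisely the canonical generators $(\tilde h_1, \tilde h_2, C)$ of $H_2$, this amounts to the identity map on the coordinates $(t, s, \lambda) = (\chi(h_1), \chi(h_2), \chi(C))$ and descends to the canonical bijection $Z_{2,0;\,H_1} \simeq Z_{2,0;\,H_2}$.

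No serious obstacle is anticipated, since the argument parallels that of Lemma~\ref{lemma:fib1}. The only genuine bookkeeping is the five short verifications that $e_{12}, e_{13}, e_{14}, e_{24}, e_{34}$ all normalize $H_1$; each reduces to the observation that the relevant commutator lies in the center $Z(G)$ and thus already in $\langle C\rangle \subset H_1$.
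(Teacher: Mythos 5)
Your proposal is correct and follows essentially the same route as the paper: reduce via $H^{*}=H$ (abelian, hence isolated) to conjugation by Proposition~\ref{prop:twosubgroups}, identify $G/N_{G}(H_1)$ as generated by the class of $e_{23}$, compute the action on the parameters $(b,e')\mapsto(b-a\tilde d,\,e'+f'\tilde d)$, and transport characters along the conjugation. The only difference is that you spell out the verification that the other elementary matrices normalize $H_1$, which the paper leaves as "easy to observe."
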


\begin{proof}
Since $H_1 =  H^{*}_{1}$, by Proposition~\ref{lem:isopairs} we need to study only such subgroups $H_2$ that there exists an element $g \in G$ such that~${(H_2)^g=H_1}$ and ${\chi_1|_{H_2^g\cap H_1} = \chi_2^g|_{H_2^g\cap H_1}}$. 
If $H_1 \in \Sigma_{2, 0}$, then the quotient $G / N_{G}(H_1)$ is generated by 
$$
g = 
\begin{pmatrix}
     1 & 0 & 0 & 0 \\
     0 & 1 & 1 & 0 \\
     0 & 0 & 1 & 0 \\
     0 & 0 & 0 & 1
\end{pmatrix}.
$$
Then it is easy to observe that conjugated subgroups are $\phi((~a,~b~-~a~\tilde{d},~e,~f',~b',~e'~+~f'~\tilde{d}))$. If we denote non-central generators of $H_2$ by $\tilde{h}_1, \tilde{h}_2$, then the characters of the subgroups are related as follows: $\chi_2(\tilde{h}_1) = \chi_{1}^{\tilde{d}}(h_1)$, $\chi_2(\tilde{h}_2) = \chi_{1}^{\tilde{d}}(h_2)$ and $\chi_2(C) = \chi_{1}(C)$. It gives a canonical identification of fibers $Z_{2, 0; \, H_1}, Z_{2, 0; \, H_2} $ over subgroups $H_1$ and~$H_2$.
\end{proof}

\section{ \texorpdfstring{$\text{The case of }\mathrm{rk}_{1}(H) = 2, \ \mathrm{rk}_{2}(H) = 1.$}{Case 3}}
\bigskip


Let us define sets $S_1, S_2$.
 $$S_1 =  \{ (a, e, d', e') \in \ZZ^{4}  \ \vert \ a \neq 0, \ d \neq 0 \  \text{and}  \  |\mathrm{GCD}(a, e) | = 1,  \ |\mathrm{GCD}(d', e') | = 1  \} .$$
$$ \hspace*{-1cm}
S_2 =  \{  (a, e, d', e') \in \ZZ^{4}   \ \vert \ a \neq 0, \ d \neq 0 \ \text{and} \  |\mathrm{GCD}(a, e) | = k_1,  \ |\mathrm{GCD}(d', e') | = k_2 , \ | k_1 k_2 | > 1  \}.$$

\medskip

\begin{lemma}\label{lemma:3}

There is a canonical bijection $\phi$ from $S_1 \ \cup \ S_2$ to $\Sigma_{2, 1}$. It maps a tuple $(a, e, d', e')$ to a subgroup $H$, generated by the following matrices $$
h_1 =
\begin{pmatrix}
     1 & a & 0 & 0 \\
     0 & 1 & 0 & e \\
     0 & 0 & 1 & 0 \\
     0 & 0 & 0 & 1
\end{pmatrix},
\quad
h_2 =
\begin{pmatrix}
     1 & 0 & 0 & 0 \\
     0 & 1 & d' & e' \\
     0 & 0 & 1 & 0 \\
     0 & 0 & 0 & 1
\end{pmatrix},
\quad
C =
\begin{pmatrix}
     1 & 0 & 0 & 1 \\
     0 & 1 & 0 & 0 \\
     0 & 0 & 1 & 0 \\
     0 & 0 & 0 & 1
\end{pmatrix}.
$$

If $H \in \phi(S_1)$, then we can extend the bijection $\phi$ to the map from $$(\CC^{*})^2 \times {\mu_N} \times \{ \CC^{*} \setminus \mu_{\infty} \} = \{ (t, r, z,  \lambda) \ \vert \  z^{ad'} 
\lambda^{ae'} = 1 \  \text{and} \ \lambda \notin \mu_{\infty}\} $$ to $Y_{2,1; \, H}$, which is 
defined as follows: $$ t = \chi(h_1) , \ \ r = \chi(h_2) , \ \ z = \chi(h_3), \ \  \lambda = \chi(C).$$

If $H \in \phi(S_2)$, we can extend the bijection $\phi$ to the map from  $(\CC^{*})^2 \times {\mu_N} \times \{ \CC^{*} \setminus \mu_{\infty} \} = \{ (t, r, z,  \lambda) \ \vert \  z^{ad'} \lambda^{ae'} ~= ~1, \ \text{the minimal natural m that} \ z^{\frac{a d' m}{k_1 k_2}} \lambda^{\frac{a e' m }{k_1 k_2}}  ~=~1 \\
\text{equals }|k_1 k_2|, \ \text{and} \ \lambda \notin \mu_{\infty}\} $ to $Y_{2,1; \, H}$, which is defined as above. 

\end{lemma}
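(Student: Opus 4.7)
The proof parallels that of Proposition~\ref{prop:1}, adapted to the rank profile $(2,1)$. My plan is to (i) normalize the generators of $H \in \Sigma_{2,1}$, (ii) determine the conditions defining $S_1$ and $S_2$, and (iii) extend $\phi$ to characters and verify irreducibility.

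For (i): by Remark~\ref{rem-cent} the center $Z(G) = \langle C \rangle$ is contained in $H$. Since $\rk_1(H) = 2$, I pick two generators $h_1, h_2 \in H$ whose images in $G/[G,G]$ are independent, forcing $a \neq 0$ and $d' \neq 0$. The condition $\rk_2(H) = 1$ says that $H \cap [G,G]$ modulo $Z(G)$ is cyclic and (together with $C$) is generated by the class of $[h_1, h_2]$; modifying $h_1, h_2$ by elements of $H \cap [G,G]$ then yields the prescribed zero patterns in the stated matrix forms. For (ii): following the classification of elements of $H^{*} \setminus H$ carried out in the proof of Proposition~\ref{prop:1}, I determine precisely when the tuple $(a, e, d', e')$ parametrizes $H$ uniquely. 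The split into $S_1$ and $S_2$ reflects whether both tuples $(a,e)$ and $(d',e')$ are primitive (GCDs equal to $1$) or whether they admit nontrivial divisors $k_1, k_2$; in the latter case fractional powers of $h_1$ and $h_2$ already lie in $G$, and their interaction with $H$ must be tracked carefully.

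For (iii): a character $\chi$ of $H$ factors through $H/[H,H]$ and so annihilates $[h_1, h_2]$. A direct matrix computation gives $[h_1, h_2] = I + ad'\, E_{13} + ae'\, E_{14}$; writing $z$ for the value of $\chi$ on a formal primitive generator of the $(1,3)$-direction, this yields the relation $z^{ad'}\lambda^{ae'} = 1$. For $H \in \phi(S_2)$, the primitive generator of $H \cap [G,G]/Z(G)$ in the $(1,3)$-direction differs by a factor of $|k_1 k_2|$ from the formal generator used to define $z$, giving the further constraint that the minimal $m$ satisfying $z^{ad'm/(k_1k_2)}\lambda^{ae'm/(k_1k_2)} = 1$ equals $|k_1 k_2|$. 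Finally, irreducibility reduces to $S(H, \chi) = H$ via Theorem~\ref{theorem:schur-irr}; computing $N_G(H)/H$ and its action on characters as in Proposition~\ref{prop:1}(ii) shows this action is free whenever $\lambda \notin \mu_{\infty}$. I expect the main obstacle to be case $S_2$: correctly identifying the primitive generator of the relevant cyclic subgroup and translating the commutator relation into the precise order condition on $z$ requires careful bookkeeping, while the remaining verifications should proceed routinely within the framework of Section~2.
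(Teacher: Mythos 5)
Your overall strategy --- normalize the generators, translate $\chi([h_1,h_2])=1$ into the relation $z^{ad'}\lambda^{ae'}=1$, and then check freeness of the $N_{G}(H)/H$-action on characters --- is the same as the paper's, and your commutator computation $[h_1,h_2]=I+ad'E_{13}+ae'E_{14}$ is correct. However, step (i) has a genuine gap: the normal form of the generators does not follow from the reasons you give. First, independence of the images of $h_1,h_2$ in $G/[G,G]\simeq\ZZ^3$ does not force $a\neq 0$ and $d'\neq 0$; a priori the image of $H$ could be any rank-two sublattice, for instance one contained in the span of the $(1,2)$- and $(3,4)$-directions, so that the $(2,3)$-entries of both generators vanish. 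The paper rules this out by first establishing that $\chi(C)$ cannot be a root of unity (otherwise the weight pair extends to one with $\rk_2=2$), and then observing that if both $(2,3)$-entries vanish the commutators $[h_1,h_3]$ and $[h_2,h_3]$ are central and cannot both be trivial, which would force $\chi(C)\in\mu_{\infty}$. Second, and more seriously, modifying $h_1,h_2$ by elements of $H\cap[G,G]$ only alters the $(1,3)$-, $(2,4)$- and $(1,4)$-entries; it cannot remove the $(3,4)$-entries $f,f'$ of the generators. The vanishing $f=f'=0$ (or, in the symmetric case, $a=a'=0$) is a nontrivial consequence of two facts: $h_3$ must commute with $h_1$ and $h_2$ (again because otherwise $\chi(C)\in\mu_{\infty}$), and $h_3$ is proportional modulo the center to $[h_1,h_2]$. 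Moreover, the identification of $h_3$ with the primitive element $I+E_{13}$ uses $C_{G}(H)\subset H$ from Remark~\ref{rem-cent}. None of these steps appear in your outline, and without them the asserted zero pattern of $h_1,h_2$ is unproved.

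The remaining parts of your plan are consistent with the paper's proof: the split into $S_1$ and $S_2$ according to primitivity of $(a,e)$ and $(d',e')$, the derivation of the order condition on $z$ in the non-isolated case, and the freeness of the normalizer action for $\lambda\notin\mu_{\infty}$. Note, though, that for $H\in\phi(S_2)$ freeness of the normalizer action alone is not enough: one must also show that if the minimality condition on $m$ fails, then suitable fractional powers of the generators lie in $C_{G}(H)\setminus H$, contradicting $S(H,\chi)=H$. You flag this as bookkeeping, but this argument is precisely the content of the $S_2$ case and needs to be supplied.
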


\begin{proof}

The proof goes as follows. First, we prove that if $H \in \Sigma_{2, 1}$, then its generators may be chosen in the form given in Lemma~\ref{lemma:3}. Then we study characters $\chi_1, \chi_2: H \rightarrow \CC^{*}$ which correspond to equivalent irreducible weight pairs $(H, \chi_1)$ and $(H, \chi_2)$. After that, we obtain conditions for a character $\chi$ that $(H, \chi)$ is an irreducible weight pair. 

Let us denote the two generators of $H / H \cap [G,G]$ by $h_1, h_2$. If $\chi(C)$ is a root of unity, then we need to extend the weight pair $(H, \chi)$ to the pair $(H', \chi')$ with the rank $\rk_2(H') = 2$. Hence $\chi(C)$ is not a root of unity. Let us denote by $h_3$ the generator of $H \cap [G, G] / Z(G)$.
Let us first consider generators $h_1$ and $h_2$ in the following (general) form: 

$$
h_1 =
\begin{pmatrix}
     1 & a & b & 0 \\
     0 & 1 & d & e \\
     0 & 0 & 1 & f \\
     0 & 0 & 0 & 1
\end{pmatrix}, 
\quad
h_2 =
\begin{pmatrix}
     1 & a' & b' & 0 \\
     0 & 1 & d' & e' \\
     0 & 0 & 1 & f' \\
     0 & 0 & 0 & 1
\end{pmatrix}
$$

If $d = d' = 0$, then the commutator $[h_1, h_2]$ is in the center of the group $G$. But then $[h_1, h_3]$ and $[h_2, h_3]$ can not both be unity, hence $\chi(C)$ is a root of unity, which contradicts the earlier statement. Then either $d$ or $d'$ is not zero. Hence, the commutator $[h_1, h_2] = h_3^k C^n$ for some integers $k, n$. Then $\chi(h_3)^{ad} \chi(C)^{a e'} = 1$ (in particular, if $e' = 0$ we obtain that $\chi(h_3)$ is a root of unity). 
Besides, $[h_1,h_3] = [h_2, h_3] = 1$, otherwise $[h_1,h_3]$ and $ [h_2, h_3]$ are in the center of the group $G$, and $\chi(C)$ is a root of unity. Then the generator $h_3$ has to be proportional to $[h_1, h_2]$, that is, to the element

$$
\begin{pmatrix}
     1 & 0 & -a & 0 \\
     0 & 1 & 0 & f \\
     0 & 0 & 1 & 0 \\
     0 & 0 & 0 & 1
\end{pmatrix}.
$$

From  $[h_1,h_3] = [h_2, h_3] = 1$ we obtain that either $f = f' = 0$ or $a = a' = 0$. Let us consider the case of $f = f' = 0$ (the other one is treated similarly, if we put integer parameters $f = a, \ b = e, \ b' = e'$). Since $[h_1, h_2] = h_3^k C^n$, the generator $h_3$ has the following form:
 
$$
h_3 =
\begin{pmatrix}
     1 & 0 & b'' & 0 \\
     0 & 1 & 0 & 0 \\
     0 & 0 & 1 & 0 \\
     0 & 0 & 0 & 1
\end{pmatrix}.
$$
 
But the element

$$
\begin{pmatrix}
     1 & 0 & 1 & 0 \\
     0 & 1 & 0 & 0 \\
     0 & 0 & 1 & 0 \\
     0 & 0 & 0 & 1
\end{pmatrix}
$$
belongs to $C_{G}(H)$. Then by Remark~\ref{rem-cent}, it coincides with $h_3$. 

So we can have generators $h_1$ and $h_2$ in the following form (dividing $h_1$ and $h_2$ by $h_3^{b}$ and $h_3^{b'}$ correspondingly):

$$
h_1 =
\begin{pmatrix}
     1 & a & 0 & 0 \\
     0 & 1 & 0 & e \\
     0 & 0 & 1 & 0 \\
     0 & 0 & 0 & 1
\end{pmatrix}, 
\quad
h_2 =
\begin{pmatrix}
     1 & 0 & 0 & 0 \\
     0 & 1 & d' & e' \\
     0 & 0 & 1 & 0 \\
     0 & 0 & 0 & 1
\end{pmatrix}.
$$

A subgroup $H$ in this case is not necessarily isolated. Namely, it is not isolated if $|\mathrm{GCD}(a, e)|~>~1$ or $|\mathrm{GCD}(d, e')| > 1$.

\bigskip

Now for each $H \in \Sigma_{2,1}$ we describe the fiber $Y_{2,1; \, H}$.

For all subgroups $H \in \Sigma_{2, 1}$, the quotient $N_{G}(H) / H$ is generated by

$$
g =
\begin{pmatrix}
     1 & 0 & 0 & 0 \\
     0 & 1 & 0 & 1 \\
     0 & 0 & 1 & 0 \\
     0 & 0 & 0 & 1
\end{pmatrix}.
$$

The action of $g$ on a character $\chi$ is as follows:
\begin{equation}\label{eq:4}
\chi^{g}(h_1) = \chi(h_1) \lambda^{-a}, \ \ \  \chi^{g}(h_2) = \chi(h_2) , \ \ \  \chi^{g}(h_3) = \chi(h_3).
\end{equation}

If $ |\mathrm{GCD}(a, e) | = k_1,  \ |\mathrm{GCD}(d, e') | = k_2 , \ | k_1 k_2 | > 1$, and $ \chi(h_3)^{\frac{a d m}{k_1 k_2}} \chi(C)^{\frac{a e' m }{k_1 k_2}}  ~=~1$ for an integer $m$, then the elements $h_2^{\frac{m_1}{k_1}}$ and $h_2^{\frac{m_2}{k_2}}$ belong to $C_{G}(H) \setminus H$. It contradicts irreducibility of corresponding induced representation. 

Thus, we can see that given the conditions for a character $\chi$ of $H$ formulated in Lemma~\ref{lemma:3}, the action above~\ref{eq:4} is free. Then corresponding representations are irreducible.
\end{proof}

\medskip 

Let us denote by $Z_{2, 1;\, H}^{(2, 1)} =  \{ \{ (t, \lambda) \ \vert \ t \in \CC^*, \ \lambda \notin \mu_{\infty} \} / \sim\}$, by $Z_{2, 1;\, H}^{(1, 1)} =  \{ r \in \CC^* \}$,
by $Z_{2, 1;\, H}^{(2, 2)} =  \{ \{ (z, \lambda) \ \vert \ z \in \CC^*, \ \lambda \notin \mu_{\infty} \} / \sim\}$ and by $Z_{2, 1;\, H}^{(1, 2)} = \{ \lambda \notin \mu_{\infty} \}$, where $\sim$ is defined in~\ref{z-equiv}.

\medskip

\begin{corollary}\label{corollary:fib3}

If $H \in  \Sigma_{2, 1}$, then the fiber $Z_{2, 1;\, H}$ of  $Z_{2, 1}$ over a subgroup $H$ has iterated structure of a bundle, namely: $$Z_{2, 1;\, H}^{(2, 1)} \rightarrow Z_{2, 1;\, H}^{(1, 2)} \ , \ Z_{2, 1;\, H}^{(1, 1)}  \ , \ Z_{2, 1;\, H}^{(2, 2)} \rightarrow Z_{2, 1;\, H}^{(1,2)}. $$ We describe fibers of these bundles consecutively  in coordinates $(t, r, z, \lambda)$: 

$ \{ E_{\lambda^a} \} \ , \ \CC^{*} \ , \ \mu_{a d'} \ , \  \{\CC^* \setminus S^1\} \ \cup \ \{ P_{\lambda^a} \} \ , \  \CC^{*} \ , \ \mu_{a d'} \ , \  \{S^1 \setminus \mu_{\infty} \}$.

\end{corollary}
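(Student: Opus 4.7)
The plan is to follow the template of Corollaries~\ref{corollary:fib1} and~\ref{corollary:fib2}: the iterated bundle structure of $Z_{2,1;\,H}$ is read off directly from the parameterization of $Y_{2,1;\,H}$ given in Lemma~\ref{lemma:3} and from the action in equation~\ref{eq:4} of $N_{G}(H)/H$ on characters.

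First, by Lemma~\ref{lemma:3}, the fiber $Y_{2,1;\,H}$ is parameterized by tuples $(t, r, z, \lambda) \in (\CC^{*})^{4}$ subject to the constraint $z^{a d'} \lambda^{a e'} = 1$ and $\lambda \notin \mu_{\infty}$ (together with the additional minimality condition in the case $H \in \phi(S_2)$). The equivalence $\sim_{f}$ is by definition induced by the action of $N_{G}(H)/H$, and by equation~\ref{eq:4} the single generator $g$ acts by $t \mapsto t \lambda^{-a}$ while fixing the coordinates $r$, $z$ and $\lambda$. In particular, the action preserves $\lambda$ and is trivial on $r$ and $z$, so $Z_{2,1;\,H}$ fibers over the base $Z_{2,1;\,H}^{(1,2)} = \{\lambda \notin \mu_{\infty}\}$, and over each fixed $\lambda$ splits as an independent quotient in each of the coordinates $t$, $r$, $z$, matching the three parallel bundles displayed in the statement.

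The three factorwise quotients are then immediate. The $r$-fiber is simply $\CC^{*}$ since the action is trivial on $r$. The $z$-fiber, cut out by $z^{a d'} = \lambda^{-a e'}$, has cardinality $|ad'|$ and is a torsor for $\mu_{a d'}$. The $t$-fiber is $\CC^{*} / \langle \lambda^{a} \rangle$. Since $a \ne 0$, the condition $\lambda \in \CC^{*} \setminus S^{1}$ (respectively $\lambda \in S^{1} \setminus \mu_{\infty}$) transfers to $\lambda^{a}$, so the $t$-quotient identifies with $E_{\lambda^{a}}$ (respectively $P_{\lambda^{a}}$), in the notation introduced after Corollary~\ref{corollary:fib1}. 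Assembling these three pieces over the two possible regimes for $\lambda$ produces the two unions in the statement.

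There is no substantive obstacle in the argument: the whole derivation is bookkeeping built on Lemma~\ref{lemma:3} and equation~\ref{eq:4}. The only small care needed is to distinguish the $\lambda \notin S^{1}$ regime from the $\lambda \in S^{1} \setminus \mu_{\infty}$ regime, and to observe that $\lambda \notin \mu_{\infty}$ combined with $a \ne 0$ keeps $\lambda^{a}$ outside $\mu_{\infty}$, ensuring that $E_{\lambda^{a}}$ and $P_{\lambda^{a}}$ are well defined.
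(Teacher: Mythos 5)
Your proposal is correct and follows essentially the same route as the paper: the paper's own proof likewise reads the quotient off from the action in equation~\ref{eq:4} (the generator $g$ twisting only $t$ by $\lambda^{-a}$) and extracts the $\mu_{a d'}$-torsor for $z$ from the relation $z^{a d'}\lambda^{a e'}=1$. Your write-up just makes the bookkeeping explicit.
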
 

\begin{proof}
Follows directly from $N_{G}(H) / H$ action on a character $\chi$ of a subgroup $H$ above (equations ~\ref{eq:4}).

We obtain the value of $z = (\lambda^{-ae'})^{\frac{1}{ad'}}$ from $z^{ad'} \lambda^{ae'} ~= ~1$.  
\end{proof}

\medskip

\begin{lemma}\label{lemma:fib3}

If $H_1 \in  \Sigma_{2, 1}$, then the following subgroups are equivalent to $H_1$:
$H_2 = \phi( (a, e, d', e' - d' \tilde{f}))$ for an arbitrary integer $\tilde{f}$.

If $H_1 \in  \phi(S_2)$, then there is also a finite set of subgroups which are $F$-equivalent to $H_1$.

Fibers $Z_{2, 1; \, H_1}$ and $Z_{2, 1; \, H_2} $ over equivalent subgroups may be identified canonically.
\end{lemma}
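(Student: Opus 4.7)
The plan is to follow the pattern of Lemmas~\ref{lemma:fib1} and~\ref{lemma:fib2}, using Proposition~\ref{prop:twosubgroups} as the main criterion: two irreducible weight pairs $(H_1,\chi_1)$ and $(H_2,\chi_2)$ are equivalent iff there exists $g \in G$ with $(H_2^{*})^g = H_1^{*}$ and with matching characters on $H_2^g \cap H_1$. Accordingly, I split into two cases depending on whether $H_1$ is isolated.

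For $H_1 \in \phi(S_1)$, one has $H_1 = H_1^{*}$, so the condition reduces to $(H_2)^g = H_1$ and I must describe the $G$-conjugates of $H_1$. A direct matrix computation of the commutators $[g',h_i]$ for general $g' \in G$ should show that $N_G(H_1)$ contains every element whose off-diagonal entries all lie outside the $(3,4)$ position modulo $H_1$, so that $G/N_G(H_1)$ is generated by the single element
$$
g = \begin{pmatrix} 1 & 0 & 0 & 0 \\ 0 & 1 & 0 & 0 \\ 0 & 0 & 1 & 1 \\ 0 & 0 & 0 & 1 \end{pmatrix}.
$$
Conjugation by $g^{\tilde f}$ fixes $h_1$ and the central generator $C$, while sending $h_2$ to the matrix with parameters $(a,e,d',e'-d'\tilde f)$, yielding precisely the stated family.

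For $H_1 \in \phi(S_2)$ the subgroup is not isolated, so $(H_2^{*})^g = H_1^{*}$ may hold with $H_2 \ne H_1$. Finiteness of the set of $F$-equivalent subgroups follows immediately from Proposition~\ref{lem:isopairs}. To enumerate them concretely, I would list the subgroups $H_2 \subset H_1^{*}$ of rank $(2,1)$ obtained by replacing $h_1$ (respectively $h_2$) with $h_1^{1/k'_1}$ (resp.\ $h_2^{1/k'_2}$) for divisors $k'_1 \mid k_1$ and $k'_2 \mid k_2$, keeping only those pairs for which the character constraint $z^{ad'}\lambda^{ae'}=1$ of Lemma~\ref{lemma:3} continues to define an irreducible weight pair; the conjugation orbit from the $\phi(S_1)$ case is then applied on top of this finite list.

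The fiber identification is the easiest step: the pullback $\chi \mapsto \chi^g$ preserves the central value $\lambda = \chi(C)$, hence preserves the constraint $z^{ad'}\lambda^{ae'}=1$ and the minimality condition on $m$ from Lemma~\ref{lemma:3}; it therefore descends to a canonical bijection $Z_{2,1;\,H_1} \to Z_{2,1;\,H_2}$. The main obstacle I anticipate is the combinatorial bookkeeping in the $\phi(S_2)$ case, namely verifying that the divisor pairs $(k'_1, k'_2)$ exactly parametrise the $F$-equivalent subgroups without missing any and without duplicating the conjugation orbit already produced; the interaction between the GCD-conditions defining $S_2$ and the character constraint from Lemma~\ref{lemma:3} needs to be tracked carefully.
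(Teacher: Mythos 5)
Your overall strategy (Proposition~\ref{prop:twosubgroups} as the criterion, conjugation by powers of the elementary matrix $g$ with a $1$ in position $(3,4)$ for the orbit $\phi((a,e,d',e'-d'\tilde f))$, and Proposition~\ref{lem:isopairs} for finiteness) is the same as the paper's, and the conjugation and fiber-identification parts are fine. But your concrete enumeration of the $F$-equivalent subgroups in the $\phi(S_2)$ case is wrong. A subgroup obtained by replacing $h_1$ by $h_1^{1/k_1'}$ and $h_2$ by $h_2^{1/k_2'}$ with $(k_1',k_2')\neq(1,1)$ strictly contains $H_1$; any character $\chi_2$ of such an $H_2$ agreeing with $\chi_1$ on $H_2\cap H_1=H_1$ would make $(H_2,\chi_2)$ an extension of $(H_1,\chi_1)$, and two irreducible weight pairs cannot extend one another (this is exactly the contradiction used in the proof of Proposition~\ref{lem:isopairs}). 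So after your filtering step the list collapses to $H_1$ alone, and you miss all of the genuinely $F$-equivalent subgroups.

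The correct family requires a trade-off between the two generators rather than simultaneous root extraction: with $k_1=\mathrm{GCD}(a,e)$ and $k_2=\mathrm{GCD}(d',e')$, for each divisor $m_1$ of $k_1$ one takes $H_2=\phi((\tfrac{a}{m_1},\tfrac{e}{m_1},d'm_1,e'm_1))$, i.e.\ one replaces $h_1$ by $h_1^{1/m_1}$ \emph{and} $h_2$ by $h_2^{m_1}$ (and symmetrically for divisors $m_2$ of $k_2$). Then neither of $H_1$, $H_2$ contains the other, $H_1^*=H_2^*$, the product $\mathrm{GCD}$-invariant $k_1k_2$ from Lemma~\ref{lemma:3} is preserved so the irreducibility constraint on the character survives, and the character $\chi_2$ is pinned down on $H_1\cap H_2$ by $\chi_2(\tilde h_1)^{m_1}=\chi_1(h_1)$ and $\chi_2(\tilde h_2)=\chi_1(h_2)^{m_1}$. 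Without this compensating power your argument does not produce the finite nontrivial set of $F$-equivalent subgroups that the lemma asserts.
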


\begin{proof}

First, by Proposition~\ref{lem:isopairs}[(i)] we consider $G / N_{G}(H_{1}^{*})$ action on $H_1$. The quotient $G / N_{G}(H_{1}^{*})$ is generated by 
$$
g = 
\begin{pmatrix}
     1 & 0 & 0 & 0 \\
     0 & 1 & 0 & 0 \\
     0 & 0 & 1 & 1 \\
     0 & 0 & 0 & 1
\end{pmatrix}.
$$
It is easy to compute that conjugated subgroups are $\phi((a, e, d', e' - d' \tilde{f}))$. Let us denote generators of $H_2$ which generate $H_2 / \big(H_2 \cap [G, G] \big)$ by $\tilde{h}_1, \tilde{h}_2$, and we denote by $\tilde{h}_3$ the element which generates $\big( H_2 \cap [G, G] \big)/Z(G)$. Then characters of the subgroups are related as follows: $\chi_2(\tilde{h}_1) = \chi_{1}^{\tilde{f}}(h_1), \ \chi_2(\tilde{h}_2) = \chi_{1}^{\tilde{f}}(h_2), \ \chi_2(\tilde{h}_3) = \chi_{1}^{\tilde{f}}(h_3)$, and $\chi_2(C) = \chi_{1}(C)$. It gives a canonical identification of fibers $Z_{2, 1; \, H_1}$ and $Z_{2, 1; \, H_2} $ over subgroups $H_1$ and $H_2$.
\\
If $H_1 \in  \phi(S_2)$, then $H_1$ is not isolated. By Proposition~\ref{lem:isopairs}[(ii)] we need to consider also equivalent irreducible weight pairs $(H_1, \chi_1)$ and $(H_2, \chi_2)$ such that $H_1^{*} = H_{2}^{*} $ and $H_1 \neq H_2$. Clearly, we can only possibly extract roots from generators $h_1$ and $h_2$. The condition $z^{ad'} \lambda^{ae'} ~= ~1$ (which here stands for $\chi_1([h_1, h_2]) = 1$) holds as well for $H_2$, since $h_3 = \tilde{h}_3, \ \chi_1(h_3) = \chi_2(h_3) = z$ and $\chi_1(C) = \chi_2(C) = \lambda$. Let us denote $\mathrm{GCD}(a,e) = k_1$ and $\mathrm{GCD}(d',e') = k_2$. Then for every divisor $m_1$ of $k_1$, a subgroup $\phi((\frac{a}{m_1}, \frac{e}{m_1}, d' m_1, e' m_1))$ with a character $\chi_2$ defined by $\chi_1(h_1) = \chi_2(\tilde{h}_1)^{m_1}$, $\chi_1(h_2)^{m_1} = \chi(\tilde{h}_2)$ is $F$-equivalent to $H_1$. Similarly, a subgroup $\phi( ( a m_2, e m_2, \frac{d'}{m_2}, \frac{e'}{m_2}))$ with a character $\chi_2$ defined by $\chi_1(h_1)^{m_2} = \chi(\tilde{h}_1)$, $\chi_1(h_2) = \chi(\tilde{h}_2)^{m_2}$ is $F$-equivalent to $H_1$. 

\end{proof}

\section{ \texorpdfstring{$\text{The case of }\mathrm{rk}_{1}(H) = 1, \ \mathrm{rk}_{2}(H) = 2.$}{Case 4}}
\bigskip


Let us define sets $S_1, S_2, S_3$, and $A$.
$$
 S_1 =  \{ (a, d, f, b, e, b', e') \in \ZZ^{7} \ \vert \ a \neq 0, \ f \neq 0,  \ e' \neq 0, \ b' \neq 0, \ |b| < |b'|, \ |e| < |e'| \} ,
$$
$$
\hspace*{-0.5cm} S_2 =  \{ (a, d, f, b, e, b', e') \in \ZZ^{7} \vert \ a \neq 0, \ d \neq 0, \ f =  0,  \ e' \neq 0, \ b' = 1, \ b =0, \ |e| < |e'| \ \},$$
$$\hspace*{-0.5cm} S_3 = \{ (a, d, f, b, e, b', e') \in \ZZ^{7} \vert \ a = 0, \ d \neq 0, \ f \neq  0,  \ b' \neq 0, \ e' = 1, \ |b| < |b'|, \ e = 0 \}, $$
$$A = \{ (a, d, f, b, e, b', e') \in \ZZ^{7} \ \vert \ a = f = 0, \ d =1, \ b = 0, \ e = 0, \ b' = 1, \ e' = 1 \}.$$

Let $N = {\mathrm{GCD}(|ae'|, |fb'|)} $.

\medskip

\begin{lemma}\label{lemma:4}

There is a canonical bijection $\phi$ from $S_1 \ \cup \ S_2 \ \cup \ S_3 \ \cup \ A$ to $\Sigma_{1, 2}$. It maps a tuple $(a, d, f, b, e, b', e')$ to a subgroup $H$, generated by the following matrices $$
 \hspace*{-1cm}
 h_1 =
\begin{pmatrix}
     1 & a & b & 0 \\
     0 & 1 & d & e \\
     0 & 0 & 1 & f \\
     0 & 0 & 0 & 1
\end{pmatrix}, 
\quad
h_2 =
\begin{pmatrix}
     1 & 0 & b' & 0 \\
     0 & 1 & 0 & 0 \\
     0 & 0 & 1 & 0 \\
     0 & 0 & 0 & 1
\end{pmatrix},
\quad
h_3 =
\begin{pmatrix}
     1 & 0 & 0 & 0 \\
     0 & 1 & 0 & e' \\
     0 & 0 & 1 & 0 \\
     0 & 0 & 0 & 1
\end{pmatrix},
\quad
C =
\begin{pmatrix}
     1 & 0 & 0 & 1 \\
     0 & 1 & 0 & 0 \\
     0 & 0 & 1 & 0 \\
     0 & 0 & 0 & 1
\end{pmatrix}.
$$

If $H \in \phi(S_1) \ \cup \ \phi(S_2) \ \cup \ \phi(S_3)$, we can extend the bijection $\phi$ to the map from $$\CC^{*} \times  \{ \CC^{*} \setminus \mu_{\infty} \} \times  \{ \CC^{*} 
\setminus ~\mu_{\infty} \} \times ~\{\mu_{N} \} = \{ (t, z, w,  \lambda) \ \vert \  \ z, \ w \notin \mu_{\infty},\ \lambda^{N} = 1 \} $$ to $Y_{1,2; \, H}$, which is defined as 
follows: $$ t = \chi(h_1) , \ \ z = \chi(h_2) , \ \ w = \chi(h_3), \ \  \lambda = \chi(C).$$

If $H \in \phi(A)$, then $H$ is abelian (and there is only one such a subgroup). We can extend the bijection $\phi$ to the map from  $$ \CC^{*} \times  \{ \CC^{*} \setminus \mu_{\infty} \} \times  \{ \CC^{*} \setminus \mu_{\infty} \} \times \CC^{*}  = \{ (t, z, w,  \lambda) \ \vert \ z, \ w \notin \mu_{\infty} \} \ \cup $$ $$\  \CC^{*} \times \CC^* \times \CC^* \times  \{ \CC^{*} \setminus \mu_{\infty} \}  = \{ (t, z, w,  \lambda) \ \vert \ \lambda \notin \mu_{\infty} \}$$ to $Y_{1,2; \, H}$, which is defined as above. 

\end{lemma}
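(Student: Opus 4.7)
The plan is to follow the same four-step structure used in Proposition~\ref{prop:1} and Lemma~\ref{lemma:3}: (i) put the generators of $H$ in a normal form; (ii) identify the four parameter sets $S_1,S_2,S_3,A$ as an exhaustive and disjoint description of $\Sigma_{1,2}$; (iii) compute the commutators $[h_1,h_2]$ and $[h_1,h_3]$ to deduce the algebraic constraints on $\chi$; (iv) compute $N_G(H)/H$ and use Theorem~\ref{theorem:schur-irr} to translate freeness of its action on characters into irreducibility of $\ind_H^G(\chi)$.

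First I would exploit that $[G,G]/Z(G)$ is free abelian of rank $2$, spanned by the classes of the elementary matrices at positions $(1,3)$ and $(2,4)$. Since $\rk_2(H)=2$, the image of $H\cap[G,G]$ in $[G,G]/Z(G)$ is a rank-$2$ sublattice, and by Remark~\ref{rem-cent} the center $Z(G)=\langle C\rangle$ is contained in $H$. A standard lattice-reduction argument then lets me choose generators $h_2,h_3$ of $H\cap[G,G]$ whose non-zero off-diagonal entries occur only at $(1,3)$ and $(2,4)$ respectively, with the residual $(1,4)$-entries absorbed into $C$. Since $\rk_1(H)=1$ there is a single generator $h_1$ modulo $H\cap[G,G]$; its $(1,3)$- and $(2,4)$-entries can then be reduced modulo $b'$ and $e'$ by multiplying by powers of $h_2$ and $h_3$. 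This yields $|b|<|b'|$, $|e|<|e'|$ in $S_1$, while $S_2$, $S_3$ cover the degenerate cases $b'=1$ or $e'=1$ (forcing the reduced $b$ or $e$ to vanish) and $A$ covers the purely diagonal case $a=f=0$ in which $h_1$ is the matrix with the single non-zero off-diagonal entry $d=1$ at position $(2,3)$.

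The heart of the argument is the commutator computation. Since $h_2,h_3\in\gamma_1(G)$, both $[h_1,h_2]$ and $[h_1,h_3]$ lie in $Z(G)$; direct matrix multiplication gives $[h_1,h_2]=C^{\pm fb'}$ and $[h_1,h_3]=C^{\pm ae'}$. Because $\chi$ is a homomorphism, the vanishing of these commutators under $\chi$ forces $\lambda^{fb'}=\lambda^{ae'}=1$, and hence $\lambda^{N}=1$ with $N=\mathrm{GCD}(|ae'|,|fb'|)$ as soon as either $fb'$ or $ae'$ is non-zero; this is exactly the constraint $\lambda\in\mu_N$ appearing in the parametrization for $S_1,S_2,S_3$. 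In case $A$ we have $a=f=0$, both constraints become vacuous, $h_1$ commutes with $h_2$ and $h_3$, and $H$ is abelian; isolatedness then follows from Proposition~\ref{prop:isol}. The two disjoint regions $\{\lambda\notin\mu_\infty\}$ and $\{z,w\notin\mu_\infty\}$ in the parametrization of $Y_{1,2;\,\phi(A)}$ reflect the two disjoint ways to keep the $N_G(H)/H$-action on characters free.

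Finally I would compute $N_G(H)/H$ explicitly by solving the conjugation equations modulo $H$, and write down its action on $(t,z,w,\lambda)$ in the spirit of the earlier cases. The requirement $S(H,\chi)=H$ then reduces, just as before, to freeness of this action, which is equivalent to $z,w\notin\mu_\infty$ in the non-abelian cases and to the stated two-component disjunction in case~$A$; Theorem~\ref{theorem:schur-irr} then converts freeness into irreducibility. I expect the main technical obstacle to be the combinatorial bookkeeping of the case split: one must verify exhaustiveness of $S_1\cup S_2\cup S_3\cup A$ and uniqueness of the normal form, and in particular carefully rule out mixed configurations in which $b'=0$ or $e'=0$ while $a,f$ are both non-zero, as these would either collapse into the rank $(2,1)$ setting of Lemma~\ref{lemma:3} or violate the isolatedness of $H$.
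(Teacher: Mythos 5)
Your proposal is correct and follows essentially the same route as the paper: normal form for $h_1,h_2,h_3$ with $b,e$ reduced modulo $b',e'$, the commutator relations $\chi([h_1,h_2])=\lambda^{fb'}=1$ and $\chi([h_1,h_3])=\lambda^{ae'}=1$ giving $\lambda\in\mu_N$, a separate treatment of the abelian case $\phi(A)$ via Proposition~\ref{prop:isol}, and then an explicit computation of the $N_{G}(H)/H$-action on characters whose freeness (equivalently, the non-extendability conditions $z,w\notin\mu_\infty$, resp.\ the two-component disjunction for $\phi(A)$) is converted into irreducibility by Theorem~\ref{theorem:schur-irr}. The only substantive point to carry out beyond your sketch is the one the paper also records: minimality of $b',e'$ among natural numbers with $\lambda^{fb'}=\lambda^{ae'}=1$, which is forced by $S(H,\chi)=H$.
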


\begin{proof}

The proof goes as follows. We consider cases of non-abelian subgroups and the abelian subgroup separately. We study characters $\chi_1, \chi_2: H \rightarrow \CC^{*}$ which correspond to equivalent irreducible weight pairs $(H, \chi_1)$ and $(H, \chi_2)$. Then we obtain conditions for a character $\chi$ that $(H, \chi)$ is an irreducible weight pair, and we compute the fiber $Y_{1, 2; \, H}$ over a subgroup $H$.

Clearly, a subgroup $H$ with ranks $\mathrm{rk}_1(H) = 1, \ \mathrm{rk}_1(H) = 2$ can be generated as follows:
$$
h_1 =
\begin{pmatrix}
     1 & a & b & 0 \\
     0 & 1 & d & e \\
     0 & 0 & 1 & f \\
     0 & 0 & 0 & 1
\end{pmatrix}, 
\quad
h_2 =
\begin{pmatrix}
     1 & 0 & b' & 0 \\
     0 & 1 & 0 & 0 \\
     0 & 0 & 1 & 0 \\
     0 & 0 & 0 & 1
\end{pmatrix},
\quad
h_3 =
\begin{pmatrix}
     1 & 0 & 0 & 0 \\
     0 & 1 & 0 & e' \\
     0 & 0 & 1 & 0 \\
     0 & 0 & 0 & 1
\end{pmatrix}
$$

We have $\chi([h_1, h_2]) = \chi(C)^{f b'} = 1$ and $ \chi([h_1, h_3]) = \chi(C)^{a e'} =1$. From irreducibility criterion $S(H, \chi) = H$ we conclude that $b', e'$ have to be minimal natural numbers satisfying the condition $\chi(C)^{f b'} = \chi(C)^{a e'} =1$ (otherwise we can extend the weight pair). Obviously, integers $b'$ and $e'$ have to be non-zero and $|b|$, $|e|$ may be chosen to be smaller than $|b'|$, $|e'|$. If $f = 0$, then $b'=0$ from the condition $S(H, \chi) = H$, and hence $b = 0$ (similarly for $a = 0$ we have $e = 0$).

${\bf (i)}.$
First, let us consider the case of a subgroup $H \in \Sigma_{1, 2} \setminus \phi(A)$. Then either $a \neq 0$, or $f \neq 0$, and hence, $\chi(C)$ is a root of unity. Let us consider the first case with $a \neq 0$ (the other one with $ f \neq 0$ is treated similarly). If $d = 0$ then the weight pair may be extended to the one with ranks $\rk_1(H) = \rk_2(H) = 2$. Hence, $d$ is non-zero.

In this case, the quotient $N_{G}(H) / H$ is generated by

$$
g_1=
\begin{pmatrix}
     1 & 0 & 0 & 0 \\
     0 & 1 & d' & 0 \\
     0 & 0 & 1 & 0 \\
     0 & 0 & 0 & 1
\end{pmatrix}, 
\quad
g_2=
\begin{pmatrix}
     1 & 0 & 0 & 0 \\
     0 & 1 & 0 & 0 \\
     0 & 0 & 1 & f' \\
     0 & 0 & 0 & 1
\end{pmatrix},
$$
where $d'$ in $g_1$ is such a minimal natural number that $ad'$ is divisible by $b'$ and $fd'$ is divisible by $e'$, and $f'$ in $g_2$ is a minimal natural number that $f'd$ is divisible by $e'$. 

In this case a subgroup $H$ is not necessarily isolated.  

Generators $g_1, g_2$ act on a character $\chi$ as follows:
$$\chi^{g_1}(h_1) = \chi(h_1)  \chi(h_2)^{-\frac{ad'}{b'}}  \chi(h_3)^{\frac{fd'}{e'}},$$
$$\chi^{g_1}(h_2) = \chi(h_2), \ \ \  \chi^{g_1}(h_3) = \chi(h_3) $$
\begin{center}
and
\end{center}
$$\chi^{g_2}(h_1) = \chi(h_1)  \chi(h_3)^{-\frac{f'd}{e'}} \chi(C)^{-bf'}, $$
$$\chi^{g_2}(h_2) = \chi(h_2) \chi(C)^{-b'f'}, \ \ \  \chi^{g_2}(h_3) = \chi(h_3). $$
\medskip
From irreducibility criterion $S(H, \chi) = H$ we obtain that neither $\chi(h_2)$ nor $\chi(h_3)$ is a root of unity, otherwise we can extend the weight pair $(H, \chi)$ to the weight pair $(H', \chi')$ with $\mathrm{rk}_{1}(H') = \mathrm{rk}_{2}(H') = 2$. If these conditions are satisfied, then the action above is free, and corresponding representations are irreducible.

\bigskip
${\bf (ii)}.$
Now let us consider the case of the abelian subgroup $H \in \phi(A)$ (all the other cases were proven to correspond to non-abelian subgroups). Since $[h_1, h_2] = [h_1, h_3] = 1$, we have $a = f= 0$ for a generator $h_1$ in the following form:
$$
\begin{pmatrix}
     1 & a & b & 0 \\
     0 & 1 & d & e \\
     0 & 0 & 1 & f \\
     0 & 0 & 0 & 1
\end{pmatrix}.
 $$

From Lemma~\ref{prop:isol} it follows that since $H$ is abelian, it is isolated. Hence, generators of $H$ have the following form:

$$
h_1 =
\begin{pmatrix}
     1 & 0 & 0 & 0 \\
     0 & 1 & 1 & 0 \\
     0 & 0 & 1 & 0 \\
     0 & 0 & 0 & 1
\end{pmatrix}, 
\quad
h_2 =
\begin{pmatrix}
     1 & 0 & 1& 0 \\
     0 & 1 & 0 & 0 \\
     0 & 0 & 1 & 0 \\
     0 & 0 & 0 & 1
\end{pmatrix},
\quad
h_3 =
\begin{pmatrix}
     1 & 0 & 0 & 0 \\
     0 & 1 & 0 & 1 \\
     0 & 0 & 1 & 0 \\
     0 & 0 & 0 & 1
\end{pmatrix}
$$

Since $N_{G}(H) = G$, the quotient $N_{G}(H) / H$ is generated by

$$
g_1=
\begin{pmatrix}
     1 & 1 & 0 & 0 \\
     0 & 1 & 0 & 0 \\
     0 & 0 & 1 & 0 \\
     0 & 0 & 0 & 1
\end{pmatrix}, 
\quad
g_2=
\begin{pmatrix}
     1 & 0 & 0 & 0 \\
     0 & 1 & 0 & 0 \\
     0 & 0 & 1 & 1 \\
     0 & 0 & 0 & 1
\end{pmatrix}.
$$

Then $g_1$ and $g_2$ act on a character $\chi$ as follows:

$$\chi^{g_1}(h_1) = \chi(h_1)  \chi(h_2) , \ \ \  \chi^{g_1}(h_2) = \chi(h_2), \ \ \  \chi^{g_1}(h_3) = \chi(h_3) \lambda$$
\begin{center}
and
\end{center}
$$\chi^{g_2}(h_1) = \chi(h_1)  \chi(h_3)^{-1} , \ \ \  \chi^{g_2}(h_2) = \chi(h_2) \lambda^{-1}, \ \ \  \chi^{g_2}(h_3) = \chi(h_3). $$

\medskip
Then from irreducibility criterion $S(H, \chi) = H$ we obtain that $\chi(h_2)$, $ \chi(h_3)$ or $\chi(C)$ is not a root of unity. If these conditions are satisfied, then the action above is free, and corresponding representations are irreducible.

\end{proof}

\medskip 

Let us denote by $Z_{1, 2;\, H}^{(2)} =  \{ \{ (t, z, w,  \lambda) \ \vert \ t \in \CC^*, \ z, w \in \{ \CC^{*} \setminus \mu_{\infty} \}^2,\ \lambda^{N} = 1 \} / \sim\}$, by $ Z_{1, 2;\, H}^{(1)} =  \{  (z, w,  \lambda) \ \vert \  \ z, w \in \{ \CC^{*} \setminus \mu_{\infty} \}^2, \ \lambda^{N} = 1 \}  $, where $\sim$ is defined in~\ref{z-equiv}.

Let us denote by $Z_{1, 2, \, \text{ab};\, H}^{(2)} =  \{ \{ (t, z, w,  \lambda) \ \vert \ t \in \CC^*, \ z \ \text{or} \ w \ \text{or} \ \lambda \notin  \mu_{\infty} \} / \sim\}$, by $ Z_{1, 2, \, \text{ab};\, H}^{(1)} =  \{ ( z, w,  \lambda) \ \vert \ z \ \text{or} \ w \ \text{or} \ \lambda \notin  \mu_{\infty}  \}  $, where $\sim$ is defined in~\ref{z-equiv}.

\medskip

\begin{corollary}\label{corollary:fib4}

If $H \in \Sigma_{1, 2} \setminus \phi(A)$, then the fiber $Z_{1, 2;\, H}$ of  $Z_{1, 2}$ over a subgroup $H$ has the following  iterated structure of a bundle, namely:
$$Z_{1, 2;\, H}^{(2)}  \rightarrow Z_{1, 2;\, H}^{(1)}.$$ We describe fibers of this bundle consecutively in coordinates $(t, z, w, \lambda)$.

$$T_{z^{\frac{ad'}{b'}} w^{\frac{fd'}{e'}}, \, w^{\frac{f'd}{e'}} \lambda^{b f' }} \ , \ \{ \CC^{*} \setminus \mu_{\infty} \} \ , \ \{ \CC^{*} \setminus \ \mu_{\infty} \} \ , \ \{ 
\mu_{N} \};$$

If $H \in \phi(A)$, then the fiber $Z_{1, 2;\, H}$ of  $Z_{1, 2}$ over a subgroup $H$ has the following  iterated structure of a bundle, namely:
$$Z_{1, 2\, \text{ab};\, H}^{(2)}  \rightarrow Z_{1, 2\, \text{ab};\, H}^{(1)}.$$ We describe fibers of this bundle consecutively in coordinates $(t, z, w, \lambda)$.
$$T_{z, w} \ , \ \{ E_{\lambda}  \setminus \mu_{\infty} \} \ , \ \{ E_{\lambda} \setminus \mu_{\infty}  \} \ , \ \{ \CC^{*} \setminus S^1 \} \ \cup  $$
$$ T_{z, w} \ , \ \{ P_{\lambda}  \setminus \mu_{\infty} \} \ , \ \{ P_{\lambda} \setminus \mu_{\infty}  \} \ , \ \{S^1 \setminus \mu_{\infty} \} 
 \  \cup $$
$$  \{ E_{z} \} \ , \ \{ E_{\lambda} \setminus \mu_{\infty}  \} \ , \ \{ E_{\lambda} \cap  \mu_{\infty}  \} \ , \ \{ \CC^{*} \setminus  S^1 \} 
 \ \cup$$
$$   \{ E_{z} \} \ , \ \{ P_{\lambda} \setminus \mu_{\infty}  \} \ , \ \{ P_{\lambda} \cap  \mu_{\infty}  \} \ , \ \{ S^1 \setminus \mu_{\infty} \} \ \cup $$
$$\{ E_{w} \} \ , \ \{ E_{\lambda} \cap  \mu_{\infty}  \} \ , \ \{ E_{\lambda} \setminus \mu_{\infty}  \} \ , \  \{ \CC^{*} \setminus  S^1 \} \ \cup $$
$$\{ E_{w} \} \ , \ \{ P_{\lambda} \cap  \mu_{\infty}  \} \ , \ \{ P_{\lambda} \setminus \mu_{\infty}  \} \ , \  \{S^1 \setminus \mu_{\infty} \} \ \cup $$
$$ \{ \CC^{*} \} \ , \ \{ E_{\lambda} \cap  \mu_{\infty}  \} \ , \ \{ E_{\lambda} \cap \mu_{\infty} \} \ , \  \{ \CC^{*} \setminus  S^1 \} \ \cup$$
$$  \{ \CC^* \} \ , \ \{ P_{\lambda} \cap  \mu_{\infty}  \} \ , \ \{ P_{\lambda} \cap  \mu_{\infty} \} \ , \  \{ S^1 \setminus \mu_{\infty} \} \ \cup $$
$$ \{ T_{z,w} \} \ , \ \{  \CC^* \setminus  \mu_{\infty}  \} \ , \ \{  \CC^* \setminus  \mu_{\infty} \} \ , \  \{ \mu_{\infty} \} \ \cup $$
$$  \{ T_{z,w} \} \ , \ \{  \CC^* \setminus  \mu_{\infty}  \} \ , \ \{  \CC^* \setminus  \mu_{\infty} \} \ , \  \{ \mu_{\infty} \}.$$

\end{corollary}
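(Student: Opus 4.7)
My plan follows the same template as Corollaries~\ref{corollary:fib1}--\ref{corollary:fib3}: the fiber structure is obtained by feeding the explicit $N_G(H)/H$-action on characters computed in Lemma~\ref{lemma:4} into the equivalence relation $\sim$ of Definition~\ref{z-equiv}, and organizing the resulting orbit space as an iterated bundle stratified over the values and orbit types of $(\lambda,z,w)$. In each regime the three base coordinates $\lambda,z,w$ (or their orbit classes) stratify the base, and the fiber over a base point is described by the residual action on $t$.

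First I would treat the non-abelian case $H\in\Sigma_{1,2}\setminus\phi(A)$. The relations $\chi([h_1,h_2])=\chi([h_1,h_3])=1$ force $\lambda^{fb'}=\lambda^{ae'}=1$, so $\lambda\in\mu_N$, which fixes the $\lambda$-layer of the bundle. Using the minimality conditions on $b',e',f'$, I would verify that the apparent action of $g_2$ on $z$ by $\lambda^{-b'f'}$ is trivial and that both $g_1,g_2$ fix $w$, so the second layer of the base is $(\CC^*\setminus\mu_\infty)^2$. The $t$-components of the actions then read off directly the two multiplicative generators $z^{ad'/b'}w^{fd'/e'}$ and $w^{f'd/e'}\lambda^{bf'}$, producing the fiber $T_{\,\cdot\,,\,\cdot\,}$ stated in the corollary.

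Next I would treat the abelian case $H\in\phi(A)$. Here $\lambda$ is unconstrained, but the action on the base is nontrivial: the formulas give $g_1: w\mapsto w\lambda$ and $g_2: z\mapsto z\lambda^{-1}$, while $t$ is multiplied by $z$ and $w^{-1}$ respectively (with a quadratic $\lambda$-correction from the commutator of $g_1$ and $g_2$). The orbit space of $(z,w)$ for fixed $\lambda$ therefore factors as a product of two $\lambda^{\mathbb Z}$-quotients, each of which is $E_\lambda$, $P_\lambda$, or a finite cyclic quotient (conformally equivalent to $\CC^*$) according as $\lambda\in\CC^*\setminus S^1$, $\lambda\in S^1\setminus\mu_\infty$, or $\lambda\in\mu_\infty$. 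I would then split further by whether each of $z,w$ is a torsion or non-torsion element of its orbit (equivalently, whether the orbit class lies in $\mu_\infty$), since this determines whether $z$ or $w$ acts freely on $t$: the $t$-fiber is $T_{z,w}$, $E_z$, $E_w$, or the full $\CC^*$ accordingly. This yields the ten listed subcases.

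The main obstacle I expect is combinatorial bookkeeping rather than mathematical depth: checking the cancellation $\lambda^{b'f'}=1$ in the non-abelian case from the minimality of $b',e',f'$, and then enumerating the subcases in the abelian case (partitioned by the joint membership of $\lambda,z,w$ in $S^1$ and $\mu_\infty$) without omission or redundancy, matching them exactly to the ten lines of the corollary. Once these subcases are tabulated, each is a one-line orbit computation from the action formulas already recorded in Lemma~\ref{lemma:4}.
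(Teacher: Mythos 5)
Your proposal is correct and follows essentially the same route as the paper: the paper's own proof of this corollary is a single line stating that the fibers follow directly from the $N_{G}(H)/H$-action on characters computed in Lemma~\ref{lemma:4}, which is exactly the orbit computation you carry out. Your write-up is in fact more explicit than the paper's, notably in flagging the cancellation $\lambda^{b'f'}=1$ needed for the $z$-coordinate of the base in the non-abelian case and in enumerating the torsion/non-torsion subcases for $\phi(A)$, both of which the paper leaves implicit.
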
 
\medskip
\begin{proof}
Follows directly from $N_{G}(H) / H$ action on a character $\chi$ of $H$ above. 

\end{proof}

\medskip

\begin{lemma}\label{lemma:fib4}

If $H \in  \Sigma_{1, 2} \setminus \phi(A)$, then there are only subgroups, which are $F$-equivalent to $H$ (finitely many subgroups).

If $H \in \phi(A)$, then the corresponding subgroup is abelian and normal, and it is not equivalent to any other subgroups.

Fibers $Z_{1, 2; \, H_1}$ and $Z_{1, 2; \, H_2} $ over equivalent subgroups $H_1$ and $H_2$ may be identified canonically.

\end{lemma}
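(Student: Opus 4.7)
The plan is to treat the non-abelian cases $H \in \phi(S_1) \cup \phi(S_2) \cup \phi(S_3)$ and the abelian case $H \in \phi(A)$ separately, in both instances reducing the statement to a computation of $N_G(H^*)$ together with an appeal to Proposition~\ref{prop:twosubgroups} and Proposition~\ref{lem:isopairs}.

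First, for $H \in \Sigma_{1,2}\setminus\phi(A)$, I will show that $H^*$ is normal in $G$. Using the explicit generators $h_1, h_2, h_3, C$ from Lemma~\ref{lemma:4}, I would first determine $H^*$ in each of the three subcases $\phi(S_1),\phi(S_2),\phi(S_3)$ by adjoining to $H$ those roots of generators that already lie in $G$ (this is where the three subcases diverge, since the vanishing of $a$ or $f$ affects which roots must be taken). Then normality of $H^*$ is a direct matrix check: conjugate each generator of $H^*$ by the elementary unipotent generators of $G$ and verify the result lies in $H^*$. Once $H^*\triangleleft G$, Proposition~\ref{prop:twosubgroups} immediately forces any $H_2$ giving an equivalent irreducible weight pair to satisfy $(H_2^*)^g = H^*$ for some $g \in G$, hence $H_2^* = H^*$, so $H_2$ is $F$-equivalent to $H$. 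Finiteness of the $F$-equivalence class is then inherited from Proposition~\ref{lem:isopairs}.

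For $H \in \phi(A)$, the subgroup has a single description and is abelian (Lemma~\ref{lemma:4}), hence isolated by Proposition~\ref{prop:isol}, so $H = H^*$. Normality $N_G(H) = G$ is a direct three-line matrix check (and was in fact already recorded in the proof of Lemma~\ref{lemma:4}). It remains to rule out $H_2\in\Sigma_{1,2}$ with $(H,\chi)\sim(H_2,\chi_2)$ and $H_2\neq H$. Proposition~\ref{prop:twosubgroups} combined with $H=H^*$ and normality gives $H_2^* = H$; since $H_2$ also has ranks $(1,2)$ by Remark~2.21, if $H_2\subsetneq H$ then Proposition~\ref{prop:twosubgroups} forces $\chi_2 = \chi|_{H_2}$, so $(H,\chi)$ strictly extends $(H_2,\chi_2)$, contradicting the irreducibility of both pairs by the same argument invoked in the proof of Proposition~\ref{lem:isopairs}. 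Hence $H_2 = H$.

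For the canonical identification of fibers $Z_{1,2;H_1}$ and $Z_{1,2;H_2}$, in each $F$-equivalence class the relation between generators of $H_1$ and $H_2$ (extracting or inserting a fixed collection of roots) determines the relation between characters by $\chi_2(\widetilde h_i) = \chi_1(h_i)^{e_i}$ for appropriate exponents $e_i$, and this pairing is compatible with the $N_G(H)/H$-action described in Corollary~\ref{corollary:fib4}, producing a well-defined bijection on the quotients.

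The principal obstacle is the bookkeeping in the non-abelian step: identifying $H^*$ precisely in each of $\phi(S_1),\phi(S_2),\phi(S_3)$, and then verifying normality uniformly across these subcases. The abelian case and the fibre identification are comparatively routine once normality of $H^*$ has been established and Propositions~\ref{prop:twosubgroups} and~\ref{lem:isopairs} are invoked.
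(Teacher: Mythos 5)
Your proposal is correct and follows essentially the same route as the paper: normality of $H^*$ (i.e.\ $N_G(H^*)=G$) plus Proposition~\ref{prop:twosubgroups} reduces everything to equivalent pairs with $H_1^*=H_2^*$, finiteness then comes from Proposition~\ref{lem:isopairs}, the abelian case is handled via isolatedness and normality, and the fibre identification is read off from the relation between characters of root-related generators. The paper additionally writes out which subgroups actually occur (root extractions of $h_1,h_2,h_3$ and the variation of the parameters $b,e$ modulo the character conditions), but that is extra explicitness rather than a different argument.
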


\begin{proof}

Let us consider $H_1 \in  \Sigma_{1, 2} \setminus \phi(A)$.

Since $N_{G}(H_1^{*}) = G$, by Proposition~\ref{lem:isopairs}[(ii)] we consider only equivalent irreducible weight pairs $(H_1, \chi_1)$ and $(H_2, \chi_2)$ with $H_1^{*} = H_{2}^{*} $ and $H_1 \neq H_2$. 


We can only possibly extract roots from generators $h_1$, $h_2$, and $h_3$. Then if there is an element $g_1 \in G \setminus H_1$ such that $g_1^{k_1} = h_1$, then by Proposition~\ref{lem:isopairs}[(ii)] for any divisor $m_1$ of $k_1$ there exists a finite set of exponents of generators $h_2$ and $h_3$ such that $S(H_2, \chi_2) = H_2$. Since $g_2^{b'} = h_2$ and $g_3^{e'} = h_3$, where 
$$
g_2=
\begin{pmatrix}
     1 & 0 & 1 & 0 \\
     0 & 1 & 0 & 0 \\
     0 & 0 & 1 & 0 \\
     0 & 0 & 0 & 1
\end{pmatrix}, 
\quad
g_3=
\begin{pmatrix}
     1 & 0 & 0 & 0 \\
     0 & 1 & 0 & 1 \\
     0 & 0 & 1 & 0 \\
     0 & 0 & 0 & 1
\end{pmatrix},
$$
then for any divisor $m_2$ of $b'$ and any divisor $m_3$ of $e'$, there also exists a finite set of corresponding exponents. We omit concrete expressions of parameters of equivalent weight pairs for their cumbersomeness (but they are easy to compute).

Since $a, d, f, b', e'$ are fixed, once we fixed the exponents of generators of $H_1$, there are only parameters $b < |b'|$ and $e < |e'|$ , which yet can produce equivalent irreducible weight pairs. Let us consider $H_2 = \phi((a, d, f, 0, 0, b', e'))$ with a character $\chi_2$ defined by $\chi_2(\tilde{h}_1) = \chi_1(h_1) \chi_1(h_2)^{-\frac{b}{b'}}  \chi_1(h_3)^{-\frac{e}{e'}}$ and $\chi_1(\big< h_2, h_3, C \big >) =\chi_2(\big< h_2, h_3, C \big >)$. Then $H_1^{*} = H_2^{*}$ and $\chi_1 \vert_{H_1 \cap H_2} =\chi_2 \vert_{H_1 \cap H_2} $. All such irreducible weight pairs are $F$-equivalent.

If $H \in  \phi(A)$, then the corresponding subgroup $H$ is normal and abelian. Hence, $H$ is isolated, and by Proposition~\ref{lem:isopairs} the weight pair $(H, \chi)$ can not be equivalent to any irreducible weight pair with a different subgroup $H_2$.

\end{proof}

\section{ \texorpdfstring{$\text{The case of }\mathrm{rk}_{1}(H) = 2, \ \mathrm{rk}_{2}(H) = 2.$}{Case 5}}
\bigskip


Let  us define sets $S_1, S_2, S_3$ and $S_4$.
$$\hspace*{-0.3cm} S_1 =  \{ (a, f, b, e, d', f', b', e', b'', e'') \in \ZZ^{10} \ \vert \ a \neq 0, \ d \neq 0, \ f' \neq 0, \ f \neq 0,  \ b'' \neq 0, \ e'' \neq 0, $$ $$\ d' f \divby e'', \ d' a \divby b'', \ |b| < |b''|, \ |e| < |e''|, \  |b'| < |b''|, \ |e'| < |e''| \}, $$
$$S_2 =  \{ (a, f, b, e, d', f', b', e', b'', e'') \in \ZZ^{10} \ \vert \ a \neq 0, \ d \neq 0, \ f = f' = 0, \ e'' \neq 0, \ b'' = 1, $$ $$\ b = b' = 0, \ |e| < |e''|, \ |e'| < |e''| \},$$ 
$$S_3 = \{ (a, f, b, e, d', f', b', e', b'', e'') \in \ZZ^{10} \ \vert \ a = 0, \ d \neq 0, \ f' \neq 0, \ f = 0, \ e'' = 1, \ b'' \neq 0, $$ $$ \ e = e' = 0, \  |b'| < |b''|, \ |b| < |b''| \},$$
$$S_4 =  \{ ( a, f, b, e, d', f', b', e', b'', e'')  \in \ZZ^{10} \ \vert \ a \neq 0, \ f' \neq 0, \ d' = 0,  \ b'' \neq 0,$$ $$ \ e'' \neq 0,  \ |b| < |b''|, \ |e| < |e''|, \  |b'| < |b''|, \ |e'| < |e''|  \}.$$
\medskip

Let $N = \mathrm{GCD}(f b'', f' b'', a e'')$. Let us denote by $C_{z,w}$ the following curve: $\{ w^{\frac{-d' f}{e''}} z^{\frac{-d' a}{b''}}  \lambda^{ a e' + b f' - b'f } = 1 \ \vert \  z, w \in (\CC^{*})^2, \ z \  \text{or} \ w \notin \mu_{\infty}, \ \lambda^N = 1 \}$. Let us denote by $C_{z,w, sing}$ the following manifold: $\{ w, z \in C_{z,w} \ \text{such that} \ z \in \mu_{\infty} \ \text{and} \ w \in \{ S^1 \setminus \mu_{\infty} \} \, \}$. 
Let $N_2$ be a minimal natural number such that if $z^{\frac{-d' a}{b''}}  \lambda^{ a e'} = 1$, then $z^{N_2} = 1$.
Let $N_3$ be a minimal natural number such that if $w^{\frac{-d' f}{e''}} \lambda^{b f' - b'f }= 1$, then $w^{N_3} = 1$.

\medskip

\begin{lemma}\label{lemma:5}

There is a canonical bijection $\phi$ from $S_1 \ \cup \ S_2 \ \cup \ S_3 \ \cup \ S_4$ to $\Sigma_{2, 2}$. It maps a tuple $(a, f, b, e, d', f', b', e', b'', e'')$ to a subgroup $H$, generated by the following matrices $$
h_1 =
\begin{pmatrix}
     1 & a & b & 0 \\
     0 & 1 & 0 & e \\
     0 & 0 & 1 & f \\
     0 & 0 & 0 & 1
\end{pmatrix}, 
\quad
h_2 =
\begin{pmatrix}
     1 & 0 & b' & 0 \\
     0 & 1 & d' & e' \\
     0 & 0 & 1 & f' \\
     0 & 0 & 0 & 1
\end{pmatrix},
$$

$$
h_3 =
\begin{pmatrix}
     1 & 0 & b'' & 0 \\
     0 & 1 & 0 & 0 \\
     0 & 0 & 1 & 0 \\
     0 & 0 & 0 & 1
\end{pmatrix},
\quad
h_4 =
\begin{pmatrix}
     1 & 0 & 0 & 0 \\
     0 & 1 & 0 & e'' \\
     0 & 0 & 1 & 0 \\
     0 & 0 & 0 & 1
\end{pmatrix},
\quad
C =
\begin{pmatrix}
     1 & 0 & 0 & 1 \\
     0 & 1 & 0 & 0 \\
     0 & 0 & 1 & 0 \\
     0 & 0 & 0 & 1
\end{pmatrix}.
$$

If $H \in \phi(S_1)$, we can extend the bijection $\phi$ to the map from $$ \CC^{*} \times \CC^{*} \times C_{z, w} \times \mu_{N}  = \{ (t, s, z, w,  \lambda) \ \vert \ w^{\frac{-d' f}{e''}} z^{\frac{-d' a}{b''}}  \lambda^{ a e' + b f' - b'f } = 1, \ \lambda \in \mu_{N} \}$$ to $Y_{2,2; \, H}$, which is 
defined as follows: $$ t = \chi(h_1) , \ \ s = \chi(h_2) , \ \ z = \chi(h_3), \ \ w = \chi(h_4), \ \  \lambda = \chi(C).$$

If $H \in \phi(S_2)$, we can extend the bijection $\phi$ to the map from $$ \CC^{*} \times \CC^{*} \times \mu_{N_2} \times\{ \CC^{*} \setminus \mu_{\infty} \} \times \mu_{a e''}  = \{ (t, s, z, w,  \lambda) \ \vert  \ z^{\frac{-d' a}{b''}}  \lambda^{ a e'} = 1 , \ \lambda \in \mu_{a e''} \}$$ to $Y_{2,2; \, H}$, which is defined as above.

If $H \in \phi(S_3)$, we can extend the bijection $\phi$ to the map from $$ \CC^{*} \times \CC^{*} \times\{ \CC^{*} \setminus \mu_{\infty} \} \times \mu_{N_3} \times \mu_{f' b''}  = \{ (t, s, z, w,  \lambda) \ \vert \ w^{\frac{-d' f}{e''}} \lambda^{b f' - b'f } = 1, \ \lambda \in \mu_{f' b''} \}$$ to $Y_{2,2; \, H}$, which is defined as above.

If $H \in \phi(S_4)$, we can extend the bijection $\phi$ to the map from $$ \CC^{*} \times \CC^{*} \times \{ \CC^{*} \setminus \mu_{\infty} \} \times \CC^{*} \times \mu_{N}  = \{ (t, s, z, w,  \lambda) \ \vert \ z \ \text{or} \ w \notin \mu_{\infty}, \ \lambda \in \mu_{N} \}$$ to $Y_{2,2; \, H}$, which is defined as above.

\end{lemma}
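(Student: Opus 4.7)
The plan is to mimic the structure of the proofs of Proposition~\ref{prop:1} and Lemma~\ref{lemma:4}, adapted to the case of two generators both in $H/(H\cap[G,G])$ and in $(H\cap[G,G])/Z(G)$. I would start by choosing $h_1, h_2$ generating the first quotient and $h_3, h_4$ generating the second, in their most general upper-triangular forms, and then use the freedom to multiply generators by powers of $h_3, h_4$ and $C$ to bring $h_3, h_4$ to the reduced shape with a single non-zero entry in positions $(1,3)$ and $(2,4)$ respectively, and to reduce the corresponding entries of $h_1, h_2$ modulo $b''$ and $e''$. This step yields the inequalities $|b|, |b'| < |b''|$ and $|e|, |e'| < |e''|$ appearing in $S_1,\dots,S_4$.

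Next I would impose the commutator relations. From $[h_1,h_3] = C^{fb''}$, $[h_2,h_3] = C^{f'b''}$, and $[h_1,h_4] = C^{ae''}$, together with $\rk_2(H)=2$ and Remark~\ref{rem-cent} (which forces all these commutators to be trivial under $\chi$), the value $\lambda = \chi(C)$ must be a root of unity with $\lambda^N = 1$, where $N = \mathrm{GCD}(fb'', f'b'', ae'')$. The remaining commutator $[h_1,h_2]$ equals $h_3^{-d'a/b''}\, h_4^{-d'f/e''}\, C^{ae' + bf' - b'f}$, provided the divisibilities $d'a \divby b''$ and $d'f \divby e''$ hold, which is precisely one of the defining conditions of $S_1$. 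Applying $\chi$ gives the curve equation cutting out $C_{z,w}$. The four cases then correspond to the allowed configurations: $S_1$ is generic; $S_4$ is the subcase $d'=0$ in which the commutator equation becomes a pure relation among $\lambda$; $S_2$ (resp.\ $S_3$) covers $f=f'=0$ (resp.\ $a=f=0$), and the extra vanishing of $b,b'$ (resp.\ $e,e'$) is forced by the same argument as in Proposition~\ref{prop:1}, namely that otherwise an element of $C_G(H)\setminus H$ would appear and extend the weight pair.

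For each of the four cases I would then exhibit explicit generators $g_1, g_2$ of the quotient $N_G(H)/H$, wholly analogous to those in Lemma~\ref{lemma:4}, and compute their action on a tuple $(t,s,z,w,\lambda)$. Subject to the constraints listed in the statement (the $\mu_{N_2}$, $\mu_{N_3}$, and $\mu_N$ conditions on $\lambda$; the requirement that $z$ or $w$ avoid $\mu_\infty$ in $S_4$; and the curve $C_{z,w}$ in $S_1$), I would verify that this action is free: these conditions are in fact exactly equivalent to the non-existence of roots of $h_3$ or $h_4$ in $C_G(H)\setminus H$, hence to the non-extendability of the weight pair to a subgroup of larger rank. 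Freeness implies $S(H,\chi)=H$ and then Theorem~\ref{theorem:schur-irr} gives irreducibility, so the extended $\phi$ indeed lands in $Y_{2,2;\,H}$. Injectivity of $\phi$ on tuples is automatic from the explicit formulas for the matrix entries, and surjectivity follows from the reduction of an arbitrary $H\in\Sigma_{2,2}$ to the canonical form described in the first step.

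The main obstacle is the bookkeeping: showing exhaustively that the four cases $S_1,\dots,S_4$ cover every rank-$(2,2)$ subgroup after canonicalization, and checking in each case that the singular locus $C_{z,w,\mathrm{sing}}$ and the integers $N_2, N_3$ capture precisely the loci on which an additional element of $C_G(H)\setminus H$ appears. This amounts to a careful enumeration of the possible degenerations of the five commutator relations $[h_i,h_j]$, and of the interaction between the divisibility constraints $d'a\divby b''$, $d'f\divby e''$ and the reduction of $b, e, b', e'$ modulo $b'', e''$.
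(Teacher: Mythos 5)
Your plan follows the same route as the paper's own proof: canonical generators with the $(1,3)$ and $(2,4)$ entries reduced modulo $b''$ and $e''$, the commutator relations $[h_1,h_3]$, $[h_2,h_3]$, $[h_1,h_4]$ forcing $\lambda^{N}=1$ with $N=\mathrm{GCD}(fb'',f'b'',ae'')$ and the relation $[h_1,h_2]=h_3^{-d'a/b''}h_4^{-d'f/e''}C^{ae'+bf'-b'f}$ cutting out $C_{z,w}$, followed by the action of $N_G(H)/H$ on characters, freeness of that action under the stated constraints, and Theorem~\ref{theorem:schur-irr}. The only discrepancy is cosmetic: in this rank the paper finds that $N_G(H)/H$ is generated by a \emph{single} element (one choice for $\phi(S_1)$--$\phi(S_3)$, another for $\phi(S_4)$) rather than by two generators as in Lemma~\ref{lemma:4}, and the non-extendability condition is phrased as ``if both $z$ and $w$ are roots of unity the pair extends to one with $\rk_1(H)=3$'' rather than in terms of roots of $h_3$, $h_4$; neither point changes the argument.
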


\begin{proof}

The proof goes as follows. We consider separately cases of $H~\in~\big( \phi(S_1) \ \cup \ \phi(S_2) \ \cup \ \phi(S_3) \big)$ and the case of $H \in  \phi(S_4)$. Then we study characters $\chi_1, \chi_2:~H~\rightarrow~\CC^{*}$ which correspond to equivalent irreducible weight pairs $(H, \chi_1)$ and $ (H, \chi_2)$. Then we obtain conditions for a character $\chi$ that $(H, \chi)$ is an irreducible weight pair, and we compute the fiber $Y_{2, 2; \, H}$.

\medskip
$(1).$  If $H \in \phi(S_1)$, $\phi(S_2)$ or $\phi(S_3)$, then all cases of such subgroups are treated similarly, and without loss of generality we consider the case of a subgroup $H \in \phi(S_1)$. 

If $H \in \phi(S_1)$, then a subgroup $H$ can be generated as follows:

$$
h_1 =
\begin{pmatrix}
     1 & a & b & 0 \\
     0 & 1 & 0 & e \\
     0 & 0 & 1 & f \\
     0 & 0 & 0 & 1
\end{pmatrix}, 
\quad
h_2 =
\begin{pmatrix}
     1 & 0 & b' & 0 \\
     0 & 1 & d' & e' \\
     0 & 0 & 1 & f' \\
     0 & 0 & 0 & 1
\end{pmatrix},
$$

$$
h_3 =
\begin{pmatrix}
     1 & 0 & b'' & 0 \\
     0 & 1 & 0 & 0 \\
     0 & 0 & 1 & 0 \\
     0 & 0 & 0 & 1
\end{pmatrix},
\quad
h_4 =
\begin{pmatrix}
     1 & 0 & 0 & 0 \\
     0 & 1 & 0 & e'' \\
     0 & 0 & 1 & 0 \\
     0 & 0 & 0 & 1
\end{pmatrix}
$$

Since $[h_1, h_2] = h_3^{n_1} h_4^{n_2} C^{n_3}$ for some integers $n_1, n_2, n_3$, we obtain that $\ d' f~\divby~e''$ and $d' a~\divby~b'$. Clearly we can generate $H$ with parameters of generators satisfying $\ |b| < |b''|, \ |e| < |e''|, \  |b'| < |b''|, \ |e'| < |e''|$.

We have
\begin{equation}
\chi([h_1, h_2]) =  w^{\frac{-d' f}{e''}} z^{\frac{-d' a}{b''}}  \lambda^{-a d' f - ad' f' + a e' + b f' - b'f } = 1,
\end{equation}
$$\chi([h_1, h_3]) =  \lambda^{-f b''}= 1, \ \ \ \chi([h_1, h_4]) =  \lambda^{a e''}= 1,$$
$$\chi([h_2, h_3]) =  \lambda^{-f' b''}= 1.$$

Since $\ d' f \divby~e'', \ d' a~\divby b'$ and $\lambda^{\mathrm{GCD}(f, f') b''} = \lambda^{a e''} =1$, we have $ \lambda^{-a d' f - ad' f' } = 1$.

Then 
\begin{equation}\label{eq:comm}
\chi([h_1, h_2]) =  w^{\frac{-d' f}{e''}} z^{\frac{-d' a}{b''}}  \lambda^{-a d' f - ad' f' + a e' + b f' - b'f } = w^{\frac{-d' f}{e''}} z^{\frac{-d' a}{b''}}  \lambda^{ a e' + b f' - b'f } = 1.
\end{equation}

The quotient $N_{G}(H) / H$ is generated by

$$
g =
\begin{pmatrix}
     1 & 0 & 0 & 0 \\
     0 & 1 & 0 & 0 \\
     0 & 0 & 1 & \tilde{f} \\
     0 & 0 & 0 & 1
\end{pmatrix}
$$
where $\tilde{f}$ is the minimal natural number such that $\tilde{f} d'$ is divisible by $e''$.

The generator $g$ acts on a character $\chi$ as follows:
$$\chi^{g}(h_1) = \chi(h_1) \lambda^{-b \tilde{f}}, \ \ \  \chi^{g}(h_2) = \chi(h_2) \chi(h_4)^{-\frac{d'\tilde{f}}{e''}} \lambda^{-b' \tilde{f}}$$
\begin{center}
and
\end{center}
$$\chi^{g}(h_3) = \chi(h_3)  \lambda^{-b''\tilde{f}} , \ \ \  \chi^{g}(h_4) = \chi(h_4). $$

\medskip

Then from irreducibility criterion $S(H, \chi) = H$ we obtain that the central character $\chi(C)$ is a root of unity of order $N$, where $N = \mathrm{GCD}(f'b'', a e'', f b'')~=~1$.

From the equation~\ref{eq:comm}, we obtain conditions for $w$ and $z$ defining $N_2$ and $N_3$. If both $z$ and $w$ are roots of unity, then the weight pair extends to the one with $\rk_1(H) = 3$. Then for all three cases of $\phi(S_1), \phi(S_2)$ and $\phi(S_3)$ we have $Z_{2,2; \, H} \subseteq \{ \CC^{*} \times  \CC^{*} \times \mu_{\infty} \times\{ \CC^{*} \setminus \mu_{\infty} \} \times \mu_{\infty} \}$ (concrete formulas are in Lemma~\ref{lemma:4} formulation).

\bigskip
${\bf (ii)}.$  If $H \in  \phi(S_4)$, then a subgroup $H$ may be generated as follows:

$$
h_1 =
\begin{pmatrix}
     1 & a & b & 0 \\
     0 & 1 & 0 & e \\
     0 & 0 & 1 & 0 \\
     0 & 0 & 0 & 1
\end{pmatrix}, 
\quad
h_2 =
\begin{pmatrix}
     1 & 0 & b' & 0 \\
     0 & 1 & 0 & e' \\
     0 & 0 & 1 & f' \\
     0 & 0 & 0 & 1
\end{pmatrix},
$$

$$
h_3 =
\begin{pmatrix}
     1 & 0 & b'' & 0 \\
     0 & 1 & 0 & 0 \\
     0 & 0 & 1 & 0 \\
     0 & 0 & 0 & 1
\end{pmatrix},
\quad
h_4 =
\begin{pmatrix}
     1 & 0 & 0 & 0 \\
     0 & 1 & 0 & e'' \\
     0 & 0 & 1 & 0 \\
     0 & 0 & 0 & 1
\end{pmatrix}
$$

Then we have
$$\chi([h_1, h_2]) =   \lambda^{a e' + b f' } = 1,$$

$$\chi([h_1, h_4]) =  \lambda^{a e''}= 1,$$

$$\chi([h_2, h_3]) =  \lambda^{-f' b''}= 1.$$

The quotient $N_{G}(H) / H$ is generated by

$$
g =
\begin{pmatrix}
     1 & 0 & 0 & 0 \\
     0 & 1 & \tilde{d} & 0 \\
     0 & 0 & 1 & 1 \\
     0 & 0 & 0 & 1
\end{pmatrix},
$$
where $\tilde{d}$ is the minimal natural number such that $\tilde{d} a$ is divisible by $b''$ and $\tilde{d} f'$ is divisible by $e''$.

The generator $g$ acts on a character $\chi$ as follows:

$$\chi^{g}(h_1) = \chi(h_1) \chi(h_3)^{-\frac{a \tilde{d}}{b''}} , \ \ \  \chi^{g}(h_2) = \chi(h_2) \chi(h_4)^{-\frac{f' \tilde{d}}{e''}} $$
\begin{center}
and
\end{center}
$$\chi^{g}(h_3) = \chi(h_3) , \ \ \  \chi^{g}(h_4) = \chi(h_4). $$

\medskip

Then from irreducibility criterion $S(H, \chi) = H$ we obtain that the central character $\chi(C)$ is a root of unity of order $N$, which was defined earlier.

If both $z$ and $w$ are roots of unity, then the weight pair extends to the one with $\rk_1(H) = 3$. Thus, if $H \in \phi(S_4)$, then $Z_{2,2; \, H}$ is bijective to $ \{ \CC^{*}~\times~\CC^{*}~\times~\{ \CC^{*}~\setminus \mu_{\infty} \} \times \CC^{*} \times \mu_{N} \}  = \{ (t, s, z, w,  \lambda) \ \vert \ z \ \text{or} \ w \notin \mu_{\infty}, \ \lambda \in \mu_{N} \} $. 

\end{proof}

Let us denote by $Z_{2, 2;\, H}^{(2)} =  \{ \{ (t, s, z, w, \lambda) \ \vert \ t, \ s \in (\CC^*)^2, \  \vert \ w^{\frac{-d' f}{e''}} z^{\frac{-d' a}{b''}}  \lambda^{ a e' + b f' - b'f } = 1, \ \lambda \in \mu_{N}  \} / \sim\}$ and by $Z_{2, 2;\, H}^{(1)} =  \{  (z, w, \lambda) \ \vert w^{\frac{-d' f}{e''}} z^{\frac{-d' a}{b''}}  \lambda^{ a e' + b f' - b'f } = 1, \ \lambda \in \mu_{N}  \}$, where $\sim$ is defined in~\ref{z-equiv}.

\medskip 

\begin{corollary}\label{corollary:fib5}

If $H \in \phi(S_1)$, then the fiber $Z_{2, 2;\, H}$ of  $Z_{2, 2}$ over a subgroup $H$ has iterated structure of a bundle, namely: $$Z_{2, 2;\, H}^{(2)} \rightarrow Z_{2, 2;\, H}^{(1)}. $$ We describe fibers of this bundle consecutively in coordinates $(t, s, z, w, \lambda)$: 

$$  \CC^{*} \ , \ E_{w^{\frac{d'\tilde{f}}{e''}} } \ , \ C_{z, w} \ , \ \mu_{N}  \ \cup \  \CC^{*} \ , \ P_{w^{\frac{d'\tilde{f}}{e''}} }  \ , \ C_{z, w, sing} \ , \ \mu_{N};$$ 

If $H \in \phi(S_2)$, then the fibers $Z_{2, 2;\, H}^{(2)} \rightarrow Z_{2, 2;\, H}^{(1)}$ over $H$ are canonically bijective~to: 
 $$  \CC^{*} \ , \ E_{w^{\frac{d'\tilde{f}}{e''}} } \ , \ \mu_{N_2} \ , \ \{ \CC^* \setminus S^1\} \ , \ \mu_{a e''} \ \cup \  \CC^{*} \ , \ P_{w^{\frac{d'\tilde{f}}{e''}} }  \ , \ \mu_{N_2} \ , \ \{S^1 \setminus \mu_{\infty} \} \ , \ \mu_{a e''} ;$$

If $H \in \phi(S_3)$, then the fibers $Z_{2, 2;\, H}^{(2)} \rightarrow Z_{2, 2;\, H}^{(1)}$ over $H$ are canonically bijective~to: 
 $$  E_{z^{\frac{d' a}{b''}}} \ , \ \CC^* \ , \ \{ \CC^* \setminus S^1 \}  \ , \ \mu_{N_3} \ , \ \mu_{f' b''} \ \cup \  P_{z^{\frac{d' a}{b''}}} \ , \ \CC^* \ , \ \{S^1 \setminus \mu_{\infty} \}  \ , \ \mu_{N_3} \ , \ \mu_{f' b''};$$

If $H \in \phi(S_4)$, then the fibers $Z_{2, 2;\, H}^{(2)} \rightarrow Z_{2, 2;\, H}^{(1)}$ over $H$ are canonically bijective~to: 
$$ E_{z^{\frac{d' a}{b''}}}  \ , \ E_{w^{\frac{d'\tilde{f}}{e''}} } \ , \ \{ \CC^{*} \setminus S^1 \} \ , \  \{ \CC^* \setminus S^1 \} \ , \ \mu_{N} \ \cup \  P_{z^{\frac{d' a}{b''}}}  \ , \ E_{w^{\frac{d'\tilde{f}}{e''}} } \ , \ \{S^1 \setminus \mu_{\infty} \} \ , \ \{ \CC^* \setminus S^1 \} \ , \ \mu_{N} \ \cup \ $$
$$ E_{z^{\frac{d' a}{b''}}}  \ , \ P_{w^{\frac{d'\tilde{f}}{e''}} } \ , \ \{ \CC^* \setminus S^1 \}  \ , \ \{S^1 \setminus \mu_{\infty} \} \ , \ \mu_{N} \ \cup \ P_{z^{\frac{d' a}{b''}}}  \ , \ P_{w^{\frac{d'\tilde{f}}{e''}} }  \ , \  \{S^1 \setminus \mu_{\infty} \}  \ , \  \{S^1 \setminus \mu_{\infty} \}  \ , \ \mu_{N} \ \cup \ $$
$$\CC^{*}  \ , \  E_{w^{\frac{d'\tilde{f}}{e''}} }  \ , \ \mu_{\infty}  \ , \  \{ \CC^* \setminus S^1 \}  \ , \ \mu_{N} \ \cup \  E_{z^{\frac{d' a}{b''}}}   \ , \ \CC^{*}  \ , \   \{ \CC^* \setminus S^1 \} \ , \ \mu_{\infty}  \ , \ \mu_{N} \ \cup \  $$
$$\CC^{*}  \ , \ P_{w^{\frac{d'\tilde{f}}{e''}} }  \ , \ \mu_{\infty}  \ , \  \{S^1 \setminus \mu_{\infty} \}  \ , \ \mu_{N} \ \cup \ P_{z^{\frac{d' a}{b''}}}   \ , \ \CC^{*}  \ , \  \{S^1 \setminus \mu_{\infty} \}  \ , \ \mu_{\infty}  \ , \ \mu_{N} .$$

\end{corollary}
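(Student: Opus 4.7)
The plan is to deduce the corollary directly from the parametrization and action formulas established in Lemma~\ref{lemma:5}. Since that lemma already identifies $Y_{2,2;\, H}$ as a subset of a product of tori cut out by the commutator relations, and since it gives explicit generators and action formulas for $N_{G}(H)/H$ on characters, the fiber $Z_{2,2;\, H}$ is obtained simply by quotienting $Y_{2,2;\, H}$ by this action. So the proof is a case-by-case computation of this quotient in coordinates $(t, s, z, w, \lambda)$.

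First I would handle the cases $H \in \phi(S_1), \phi(S_2), \phi(S_3)$ together, since in all three the normalizer $N_G(H)/H$ is cyclic, generated by the matrix $g$ with parameter $\tilde{f}$ from Lemma~\ref{lemma:5}. Its action fixes $z, w, \lambda$ and sends $t \mapsto t\lambda^{-b\tilde{f}}$, $s \mapsto s\cdot w^{-d'\tilde{f}/e''}\lambda^{-b'\tilde{f}}$. Because $\lambda \in \mu_N$, the action on $t$ is by a root of unity, so the $t$-coordinate simply descends to $\CC^*$. The action on $s$ is by $w^{d'\tilde{f}/e''}$ up to a root of unity factor; taking the quotient gives an elliptic curve $E_{w^{d'\tilde{f}/e''}}$ when $w \notin S^1$ and the non-separable $P_{w^{d'\tilde{f}/e''}}$ when $w \in S^1\setminus \mu_\infty$. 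The base coordinates $(z, w, \lambda)$ are then read off from the defining equations $w^{-d'f/e''}z^{-d'a/b''}\lambda^{ae'+bf'-b'f}=1$, $z^{-d'a/b''}\lambda^{ae'}=1$, $w^{-d'f/e''}\lambda^{bf'-b'f}=1$ in cases $S_1, S_2, S_3$ respectively; combined with the condition that not both $z$ and $w$ are roots of unity (from Lemma~\ref{lemma:5}), this yields the $C_{z,w}$, $\mu_{N_2}$, $\mu_{N_3}$ descriptions, with the singular locus $C_{z,w,sing}$ appearing exactly when $z \in \mu_\infty$ and $w \in S^1\setminus \mu_\infty$ (or symmetrically), which is where the fiber type switches from $E$ to $P$.

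Next I would treat $H \in \phi(S_4)$ separately, using the different generator $g$ from Lemma~\ref{lemma:5} with parameter $\tilde{d}$. Its action now fixes $z, w, \lambda$ and sends $t \mapsto t\cdot z^{-a\tilde{d}/b''}$, $s \mapsto s\cdot w^{-f'\tilde{d}/e''}$. Quotienting the $t$-coordinate by $z^{a\tilde{d}/b''}$ and the $s$-coordinate by $w^{f'\tilde{d}/e''}$ gives elliptic or non-separable quotients depending on whether the multiplier lies in $S^1$ or not. The base is all $(z, w, \lambda)$ with $\lambda \in \mu_N$ and not both $z, w$ roots of unity, and splitting the base according to which of $z, w \in S^1$ or $\in S^1\setminus \mu_\infty$ or $\in \mu_\infty$ produces the seven (well, eight) components listed in the statement.

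The main obstacle is purely bookkeeping: one has to carefully match the sub-cases of whether each of $z$, $w$, and the various multiplier expressions $w^{d'\tilde{f}/e''}$, $z^{a\tilde{d}/b''}$, etc., lie in $S^1 \setminus \mu_\infty$, in $\mu_\infty$, or outside $S^1$, and to verify in each sub-case that the action is free (so that the quotient is well-defined on the nose). Freeness follows from the irreducibility criterion $S(H,\chi) = H$ already used in Lemma~\ref{lemma:5}, but one should double-check that the curve condition $w^{-d'f/e''}z^{-d'a/b''}\lambda^{ae'+bf'-b'f}=1$ is genuinely preserved (it is, because the action fixes $z$, $w$, $\lambda$). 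Once the case-analysis is laid out, each line of the corollary is a one-step verification.
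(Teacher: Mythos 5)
Your proposal follows exactly the paper's route: the paper's own proof of this corollary is the single line that it ``follows directly from the $N_{G}(H)/H$ action on a character $\chi$ of $H$'' computed in Lemma~\ref{lemma:5}, and your case-by-case quotient of $Y_{2,2;\,H}$ by that action in coordinates $(t,s,z,w,\lambda)$ is precisely the bookkeeping that line leaves implicit. One small correction: for $H \in \phi(S_1)\cup\phi(S_2)\cup\phi(S_3)$ the generator $g$ does not fix $z=\chi(h_3)$ but multiplies it by the root of unity $\lambda^{-b''\tilde f}$; since $\lambda\in\mu_N$ this does not alter the listed components, but it should be taken into account when checking freeness of the action.
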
 

\begin{proof}
Follows directly from $N_{G}(H) / H$ action on a character $\chi$ of $H$ above. 

\end{proof}

\medskip

\begin{lemma}\label{lemma:fib5}

If $H \in  \Sigma_{2, 2}$, then there are only subgroups, which are $F$-equivalent to $H$ (finitely many subgroups).

Fibers $Z_{2, 2; \, H_1}$ and $Z_{2, 2;\, H_2} $ over equivalent subgroups $H_1$ and $H_2$ may be identified canonically.
\end{lemma}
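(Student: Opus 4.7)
The plan is to follow the blueprint of Lemma~\ref{lemma:fib4} and dispose of the two claims separately. For the first claim, I would show that for every $H \in \Sigma_{2,2}$ the isolator $H^*$ satisfies $N_G(H^*) = G$, which by Proposition~\ref{prop:twosubgroups} rules out equivalences coming from genuine $G$-conjugation and restricts the analysis to $F$-equivalence. For the second claim, I would apply Proposition~\ref{lem:isopairs} to deduce finiteness, and then make the canonical fiber identification explicit via Proposition~\ref{prop:twosubgroups} with $g = 1$.

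First I would verify $N_G(H^*) = G$ case by case through $\phi(S_1), \phi(S_2), \phi(S_3), \phi(S_4)$ using the explicit generators $h_1, h_2, h_3, h_4, C$ given in Lemma~\ref{lemma:5}. The point is that the rank-$2$ commutator part of $H$ already captures enough of $\gamma_1(G) \cap H^*$ that conjugating any $h_i$ by a general element of $G$ modifies it only within $\langle h_3, h_4, C \rangle^*$. Concretely, for any $g \in G$ the commutator $[g, h_1]$ lies in $\gamma_1(G)$, and the divisibility constraints $d' a \divby b''$, $d' f \divby e''$ together with the root-of-unity conditions on $\lambda$ ensure that $[g, h_1]$ is an integer combination of $h_3, h_4$ modulo $Z(G)$, hence lies in $H^*$; the same computation applies to $[g, h_2]$. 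Thus $gHg^{-1} \subset H^*$ for every $g \in G$, giving normality of $H^*$.

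With $N_G(H^*) = G$ in hand, Proposition~\ref{prop:twosubgroups} applied to equivalent pairs $(H_1, \chi_1) \sim (H_2, \chi_2)$ forces $H_1^* = H_2^*$. Then Proposition~\ref{lem:isopairs} gives that the preimage of $(H, \chi)$ under $\varphi$ in $Z$ is finite, so only finitely many $H_2$ can carry a character making the pair $F$-equivalent to $(H_1, \chi_1)$. The canonical identification of fibers then proceeds as in Lemma~\ref{lemma:fib4}: given $(H_2, \chi_2) \sim_f (H_1, \chi_1)$ with common isolator $H^*$, Proposition~\ref{prop:twosubgroups} with $g = 1$ yields $\chi_1|_{H_1 \cap H_2} = \chi_2|_{H_1 \cap H_2}$; since $H_1, H_2$ both have finite index in $H^*$, this compatibility uniquely determines $\chi_2$ from $\chi_1$ on the intersection, and the values on the remaining generators are fixed by the constraint of producing an irreducible pair, giving the bijection $Z_{2,2;H_1} \to Z_{2,2;H_2}$.

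The main obstacle is the case-by-case verification that $N_G(H^*) = G$, which although routine involves computing commutators for a generic $g \in G$ in each of the four parameter regimes $S_1, S_2, S_3, S_4$; particular care is needed in $\phi(S_4)$, where $d' = 0$ modifies the commutator structure of $h_2$, and in $\phi(S_2), \phi(S_3)$, where the collapse $b'' = 1$ or $e'' = 1$ simplifies but also reshuffles the roles of the central-type generators $h_3, h_4$. As in Lemma~\ref{lemma:fib4}, the explicit parameters describing all $F$-equivalent subgroups and their characters can be written down but are cumbersome, and I would omit them for brevity, noting only that they depend on divisors of the exponents appearing in $h_3, h_4$ and on the choice of the root extraction used to pass from $H$ to $H^*$.
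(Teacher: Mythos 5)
Your proposal follows essentially the same route as the paper: reduce to $F$-equivalence via $N_G(H^*)=G$, obtain finiteness from Proposition~\ref{lem:isopairs}, and identify fibers through the character compatibility $\chi_1|_{H_1\cap H_2}=\chi_2|_{H_1\cap H_2}$; the paper merely adds a semi-explicit enumeration of the admissible variations of $b,b',e,e'$ subject to the congruence $a e' + b f' - b'f \pmod N$. One small simplification you could use: since $\rk_2(H)=2$ forces $H\cap[G,G]$ to have finite index in $[G,G]$, one has $[G,G]\subseteq H^*$ and normality of $H^*$ follows at once, with no case-by-case commutator computation over $S_1,\dots,S_4$.
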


\begin{proof}

Since $N_{G}(H_1^{*}) = G$, by Proposition~\ref{lem:isopairs}[(ii)] we need to consider only equivalent irreducible weight pairs $(H_2, \chi_2)$ such that $H_1^{*} = H_{2}^{*} $ and $H_1 \neq H_2$. 

Values of a character $\chi_1$ in this case have to satisfy the following conditions:  $$\lambda^{f' b''} = \lambda^{f b''} = \lambda^{a e''} = 1,$$ $$ 
w^{\frac{-d' f}{e''}} z^{\frac{-d' a}{b''}}  \lambda^{ a e' + b f' - b'f } = 1.$$ Let us denote $\mathrm{GCD}(f' b'', f b'', a e'')$ by $N$. 

We can only possibly extract roots from generators $h_1$, $h_2$, $h_3$, and $h_4$. If there is $g_1 \in G \setminus H_1$ such that $g_1^{k_1} = h_1$, or  
$g_2 \in G \setminus H_1$ such that $g_2^{k_2} = h_2$, then by Proposition~\ref{lem:isopairs}[(ii)] for any divisor $m_1$ of $k_1$ or $m_2$ of $k_2$, there exists a finite set of exponents of other generators such that $S(H_2, \chi_2) = H_2$. Similarly, for any divisor $m_3$ of $b'$ and any divisor $m_4$ of $e'$, there also exists a finite set of corresponding exponents. We omit concrete expressions of parameters of equivalent irreducible weight pairs due to their cumbersomeness.

Since $a, f, d', f', b'', e''$ are fixed, once we fixed the exponents of generators of $H_1$, there are only parameters $b, b' < |b''|$ and $e, e' < |e''|$ which can be varied to produce equivalent irreducible weight pairs. Since we have $$ 
w^{\frac{-d' f}{e''}} z^{\frac{-d' a}{b''}}  \lambda^{ a e' + b f' - b'f } = 1,$$
the expression  $a e' + b f' - b'f$ can not be changed modulo $N$, since $\lambda^{N} = 1$. Thus, if we replace  $b, b' < |b''|$ and $e, e' < |e''|$ by  $\tilde{b}, \tilde{b'} < |b''|$ and $\tilde{e}, \tilde{e'} < |e''|$ such that $$(a e' + b f' - b'f) \  \equiv \ (a \tilde{e'} +\tilde{b} f' - \tilde{b'} f) (\mathrm{mod} \, N),$$ then there exists a corresponding character $\chi_2$ of a corresponding subgroup $H_2$ such that $\chi_1 \vert_{H_1 \cap H_2} =\chi_2 \vert_{H_1 \cap H_2} $. All such weight pairs are also $F$-equivalent to the weight pair $(H_1, \chi_1)$ .

\end{proof}

\section{ \texorpdfstring{$\text{The case of }\mathrm{rk}_{1}(H) = 3, \ \mathrm{rk}_{2}(H) = 2.$}{Case 6}}
\bigskip


Let $S =  \{ (a, b, e, d', b', e', f'', b'', e'', b''', e''') \in \ZZ^{11} \ \vert \ a \neq 0, \ d' \neq 0, \ f'' \neq 0, \ b''' \neq 0, \ e''' \neq 0, \ |b| < |b'''|, \ |e| < |e'''|, \  |b'| < |b'''|, \ |e'| < |e'''|,  \ |b''| < |b'''|, \ |e''| < |e'''| \} $.

Let us denote $\mathrm{GCD}(f'' b''', a e'' + b f'', a e''')$ by $N_3$, and let $\lambda^{N_3} = 1$. Let $N_1$ be a minimal natural number such that $z^{N_1} = 1$, if $z^{\frac{-d' a}{b'''}}  \lambda^{ -a e'} = 1$. Let $N_2$ be a minimal natural number such that $w^{N_2} = 1$, if $w^{\frac{d' f''}{e'''}}  \lambda^{ b' f''} = 1$. 

\medskip

\begin{lemma}\label{lemma:6}

There is a canonical bijection $\phi$ from $S$ to $\Sigma_{3, 2}$. It maps a tuple $(a, b, e, d', b', e', f'', b'', e'', b''', e''') $ to a subgroup $H$, generated by the following matrices $$
h_1 =
\begin{pmatrix}
     1 & a & b & 0 \\
     0 & 1 & 0 & e \\
     0 & 0 & 1 & 0 \\
     0 & 0 & 0 & 1
\end{pmatrix}, 
\quad
h_2 =
\begin{pmatrix}
     1 & 0 & b' & 0 \\
     0 & 1 & d' & e' \\
     0 & 0 & 1 & 0 \\
     0 & 0 & 0 & 1
\end{pmatrix},
\quad
h_3 =
\begin{pmatrix}
     1 & 0 & b'' & 0 \\
     0 & 1 & 0 & e'' \\
     0 & 0 & 1 & f'' \\
     0 & 0 & 0 & 1
\end{pmatrix},
$$

$$
h_4 =
\begin{pmatrix}
     1 & 0 & b''' & 0 \\
     0 & 1 & 0 & 0 \\
     0 & 0 & 1 & 0 \\
     0 & 0 & 0 & 1
\end{pmatrix},
\quad
h_5 =
\begin{pmatrix}
     1 & 0 & 0 & 0 \\
     0 & 1 & 0 & e''' \\
     0 & 0 & 1 & 0 \\
     0 & 0 & 0 & 1
\end{pmatrix},
\quad
C =
\begin{pmatrix}
     1 & 0 & 0 & 1 \\
     0 & 1 & 0 & 0 \\
     0 & 0 & 1 & 0 \\
     0 & 0 & 0 & 1
\end{pmatrix}.
$$

If $H \in \phi(S)$, we can extend the bijection $\phi$ to the map from  $$ \CC^{*} \times \CC^{*} \times  \CC^{*} \times \mu_{N_1}  \times  \mu_{N_2} \times  \mu_{N_3} = \{ (t, r, s, z, w,  \lambda) \ \vert \ z \in \mu_{N_1}, \ w \in \mu_{N_2}, \ \lambda \in \mu_{N_3} \}$$ to $Y_{3,2; \, H}$, which is 
defined as follows: $$ t = \chi(h_1) , \ \ r = \chi(h_2) , \ \ s = \chi(h_3), \ \ z = \chi(h_4), \ \ w = \chi(h_5), \ \  \lambda = \chi(C).$$
\end{lemma}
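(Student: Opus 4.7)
The plan is to follow the same four-step template used in each of the five previous sections. First, I normalise the generators of an arbitrary $H\in\Sigma_{3,2}$ to the stated shape. Second, I compute the commutator relations and translate them into the arithmetic constraints on $\chi$ that produce $N_1$, $N_2$ and $N_3$. Third, I describe $N_G(H)/H$ and check that its action on the set of admissible characters is free. Fourth, Theorem~\ref{theorem:schur-irr} promotes freeness (equivalently $S(H,\chi)=H$) to irreducibility of $\ind_H^G(\chi)$.

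For the normalisation step, $\rk_1(H)=3$ forces the image of $H$ in $G/[G,G]\simeq\ZZ^3$ to be a full-rank sublattice, and a unimodular change of generators followed by Smith normal form reduction yields $h_1,h_2,h_3$ whose superdiagonal rows are $(a,0,0)$, $(0,d',0)$, $(0,0,f'')$. Similarly $\rk_2(H)=2$ combined with the identification $\gamma_2(G)/Z(G)\simeq\ZZ^2$ yields $h_4,h_5$ with entries $b'''$ in position $(1,3)$ and $e'''$ in position $(2,4)$. Multiplying each $h_i$ by a suitable element of $\langle h_4,h_5,C\rangle$ reduces the remaining $(1,3)$, $(2,4)$ and $(1,4)$ coordinates into the ranges $|b|,|b'|,|b''|<|b'''|$ and $|e|,|e'|,|e''|<|e'''|$, while Remark~\ref{rem-cent} gives $C\in H$. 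The nonvanishing conditions on $a$, $d'$, $f''$, $b'''$, $e'''$ follow from the rank hypotheses.

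For the commutator step, direct matrix multiplication gives
\[
[h_1,h_5]=C^{ae'''},\qquad [h_3,h_4]=C^{-f''b'''},\qquad [h_1,h_3]=C^{ae''+bf''},
\]
so the identity $\chi([h_i,h_j])=1$ forces $\lambda^{N_3}=1$. The remaining non-central commutators $[h_1,h_2]$ and $[h_2,h_3]$ project onto $\langle E_{13}\rangle$ and $\langle E_{24}\rangle$ with coefficients $ad'$ and $d'f''$ respectively (forcing the divisibilities $ad'\divby b'''$ and $d'f''\divby e'''$ by closure of $H$), plus Baker--Campbell--Hausdorff corrections in $Z(G)$; writing them in terms of $h_4,h_5,C$ and applying $\chi$ yields $z^{-d'a/b'''}\lambda^{-ae'}=1$ and $w^{d'f''/e'''}\lambda^{b'f''}=1$, i.e.\ $z\in\mu_{N_1}$ and $w\in\mu_{N_2}$. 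All other pairwise commutators vanish by direct inspection using $E_{ij}E_{kl}=\delta_{jk}E_{il}$.

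Finally, $N_G(H)/H$ is generated by minimal powers $g_1=E_{12}^{\tilde a}$, $g_2=E_{23}^{\tilde d}$, $g_3=E_{34}^{\tilde f}$ whose conjugations send $h_1,\dots,h_5$ back into $H$; these are identified by examining the conjugation action modulo $\gamma_2(G)$ and then modulo $Z(G)$, exactly as in Lemmas~\ref{lemma:4} and~\ref{lemma:5}. Each $g_i$ multiplies $t$, $r$, $s$ by monomials in $z,w,\lambda$ and fixes $z,w,\lambda$, so freeness on the parameter space $\CC^*\times\CC^*\times\CC^*\times\mu_{N_1}\times\mu_{N_2}\times\mu_{N_3}$ is immediate, and Theorem~\ref{theorem:schur-irr} closes the argument. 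The main obstacle I anticipate is the bookkeeping of BCH corrections in the $Z(G)$-component both when computing the commutators $[h_i,h_j]$ and when computing the conjugates $g_i h_j g_i^{-1}$; I plan to control these by working in the graded pieces of the lower central series $G\supset\gamma_2(G)\supset Z(G)$ one at a time.
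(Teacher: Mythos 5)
Your overall template matches the paper's: normalise the generators of $H\in\Sigma_{3,2}$, extract the constraints $z\in\mu_{N_1}$, $w\in\mu_{N_2}$, $\lambda\in\mu_{N_3}$ from the relations $\chi([h_i,h_j])=1$, and then show that the (finite) quotient $N_G(H)/H$ acts freely on the admissible characters so that Theorem~\ref{theorem:schur-irr} applies. Your normalisation via Smith normal form and your commutator computations agree (up to sign conventions) with the paper's, and are in fact spelled out in more detail than the paper bothers with.

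The gap is in the final step. You assert that each generator of $N_G(H)/H$ multiplies $t,r,s$ by monomials in $z,w,\lambda$ and that therefore freeness of the action ``is immediate.'' That inference is valid in the five preceding cases precisely because there at least one of the relevant parameters is \emph{not} a root of unity, so its nontrivial powers are never $1$. In the $(3,2)$ case --- the unique finite-dimensional case --- all of $z,w,\lambda$ lie in $\mu_{N_1},\mu_{N_2},\mu_{N_3}$, so a monomial in them can perfectly well equal $1$; if it does, the corresponding nontrivial coset of $N_G(H)/H$ fixes $\chi$, one gets $S(H,\chi)\supsetneq H$, and the induced representation is reducible. Freeness here is an arithmetic claim: the order of the multiplier attached to each generator of $N_G(H)/H$ must match the order of that generator in the finite quotient, and this is exactly what the minimality of $N_1,N_2,N_3$ (together with the minimality of $b''',e'''$ built into the normal form, i.e.\ the ranges $|b|,|b'|,|b''|<|b'''|$ and $|e|,|e'|,|e''|<|e'''|$) is for. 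This is why the paper defines $N_1,N_2$ as \emph{minimal} natural numbers subject to the commutator constraints and $N_3$ as $\mathrm{GCD}(f''b''',\,ae''+bf'',\,ae''')$, and its own (admittedly terse) proof pins freeness on that minimality rather than on the mere shape of the action. Your write-up needs to carry out, or at least invoke, this order-matching; as it stands, the word ``immediate'' papers over precisely the point at which this case differs from the other five.
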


\begin{proof}

If $H \in \Sigma_{3, 2}$, then $H$ may be generated as follows:

$$
h_1 =
\begin{pmatrix}
     1 & a & b & 0 \\
     0 & 1 & 0 & e \\
     0 & 0 & 1 & 0 \\
     0 & 0 & 0 & 1
\end{pmatrix}, 
\quad
h_2 =
\begin{pmatrix}
     1 & 0 & b' & 0 \\
     0 & 1 & d & e' \\
     0 & 0 & 1 & 0 \\
     0 & 0 & 0 & 1
\end{pmatrix},
\quad
h_3 =
\begin{pmatrix}
     1 & 0 & b'' & 0 \\
     0 & 1 & 0 & e'' \\
     0 & 0 & 1 & f \\
     0 & 0 & 0 & 1
\end{pmatrix},
$$
$$
h_4 =
\begin{pmatrix}
     1 & 0 & b''' & 0 \\
     0 & 1 & 0 & 0 \\
     0 & 0 & 1 & 0 \\
     0 & 0 & 0 & 1
\end{pmatrix},
\quad
h_5 =
\begin{pmatrix}
     1 & 0 & 0 & 0 \\
     0 & 1 & 0 & e''' \\
     0 & 0 & 1 & 0 \\
     0 & 0 & 0 & 1
\end{pmatrix}.
$$

Then we have $$\chi([h_1, h_2]) = z^{\frac{-d' a}{b'''}}  \lambda^{ -a e'} = \chi(h_4)^{N_1}= 1,$$  $$\chi([h_2, h_3]) =  w^{\frac{d' f''}{e'''}}  \lambda^{ b' f''} = \chi(h_5)^{N_2} = 1.$$ 
We also have $\chi(C)^{N_3} = \lambda^{N_3} = 1$, because $$\chi([h_1, h_3]) = \lambda^{-a e'' - b f ''} = 1,$$  $$\chi([h_3, h_4]) = \lambda^{b''' f ''} = 1$$ 
\begin{center}
and  
\end{center}
$$\chi([h_1, h_5]) = \lambda^{-a e'''} = 1.$$

In this case, the quotient $N_{G}(H) / H$ is clearly finite, and since $N_1, N_2$ and $N_3$ are chosen to be minimal natural numbers such that $z^{N_1} = w^{N_2} = \lambda^{N_3} = 1$, the action of $N_{G}(H) / H$ on characters is free. Hence, $S(H, \chi) = H$, and the corresponding representations are irreducible and finite-dimensional. Let us note that this case is the only one of finite-dimensional irreducible representations, all the others (cases $1-5$) refer to the infinite-dimensional ones.

\end{proof}

\medskip 

\begin{corollary}\label{corollary:fib6}

If $H \in \Sigma_{3, 2}$, then the fiber $Z_{3, 2; \, H}$ over $H$ is canonically bijective to:
$$\CC^{*} \times \CC^{*} \times \CC^{*} \times \mu_{N_1} \times \mu_{N_2} \times \mu_{N_3}.$$

\end{corollary}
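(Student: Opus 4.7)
The plan is to combine Lemma~\ref{lemma:6}, which identifies $Y_{3,2;\,H}$ with $\CC^{*} \times \CC^{*} \times \CC^{*} \times \mu_{N_{1}} \times \mu_{N_{2}} \times \mu_{N_{3}}$, with Proposition~\ref{prop:twosubgroups}, which characterizes $\sim_{f}$-equivalence, in order to compute the fiber $Z_{3,2;\,H} = Y_{3,2;\,H}/\sim_{f}$.

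Since a subgroup $H \in \Sigma_{3,2}$ has ranks equal to those of the ambient group $G$, it is of finite index in $G$, hence $H^{*} = G$. By Proposition~\ref{prop:twosubgroups}, two characters $\chi_{1}, \chi_{2}$ of $H$ are $\sim_{f}$-equivalent iff there exists $g \in G$ with $\chi_{1}\vert_{H^{g}\cap H} = \chi_{2}^{g}\vert_{H^{g}\cap H}$ (the condition $(H^{*})^{g}=H^{*}$ is automatic since $H^{*}=G$). For $g \in N_{G}(H)$ this becomes $\chi_{1}=\chi_{2}^{g}$, so $\sim_{f}$ is generated by the action of $N_{G}(H)/H$ together with possible further equivalences from $g \in G \setminus N_{G}(H)$.

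The proof of Lemma~\ref{lemma:6} already establishes that $N_{G}(H)/H$ is finite and that its action on characters is free (this is precisely the irreducibility criterion $S(H,\chi)=H$). Following the same recipe as in the earlier cases (Corollaries~\ref{corollary:fib1}--\ref{corollary:fib5}), one reads off the explicit $N_{G}(H)/H$-action on the coordinates $(t,r,s,z,w,\lambda)$: conjugation preserves the values of $\chi$ on commutators of generators, so $z, w, \lambda$ are fixed, while it rescales $t, r, s$ by powers of $z, w, \lambda$. Because $\lambda \in \mu_{N_{3}}$, $z \in \mu_{N_{1}}$, and $w \in \mu_{N_{2}}$ already lie in finite groups of roots of unity, each induced quotient of a $\CC^{*}$-factor by a finite group of roots of unity $\mu_{k}$ is canonically identified with $\CC^{*}$ via the map $x \mapsto x^{k}$. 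Consequently, $Y_{3,2;\,H}/(N_{G}(H)/H)$ is once again the product $\CC^{*} \times \CC^{*} \times \CC^{*} \times \mu_{N_{1}} \times \mu_{N_{2}} \times \mu_{N_{3}}$. This is the crucial structural difference from the earlier cases, whose fibers exhibited elliptic or pseudo-elliptic quotients $E_{\lambda}$ and $P_{\lambda}$ precisely because $\lambda$ there was \emph{not} a root of unity.

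The main obstacle is to verify that no further identifications arise from $g \in G \setminus N_{G}(H)$. In this situation $H^{g}\cap H$ is a proper finite-index subgroup of $H$, and one must check that the condition $\chi_{1}\vert_{H^{g}\cap H} = \chi_{2}^{g}\vert_{H^{g}\cap H}$ extends uniquely to equality $\chi_{1}=\chi_{2}^{g'}$ for some $g' \in N_{G}(H)$ and therefore contributes nothing new to $\sim_{f}$. This reduces to a direct case analysis of representatives of $G/N_{G}(H)$, which is tractable here because the constraints $\lambda^{N_{3}}=z^{N_{1}}=w^{N_{2}}=1$ force the relevant character values to be rigidly determined on the finite-index subgroup $H^{g}\cap H$.
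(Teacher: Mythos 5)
Your argument is correct and is essentially the paper's own proof spelled out: the paper's entire justification is the single observation that $N_{G}(H)/H$ is finite, so its (free) action does not change the conformal class of $Y_{3,2;\,H}$, which is exactly your point that quotienting each $\CC^{*}$-factor by a finite group of roots of unity returns $\CC^{*}$ while $z,w,\lambda$ are already confined to finite sets. The extra verification you flag for $g\in G\setminus N_{G}(H)$ is not addressed in the paper's proof of this corollary at all, so on that point you are, if anything, more careful than the source.
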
 

\begin{proof}
Since in this case $N_{G}(H) / H$ is finite, its action on a character $\chi$ does not change a conformal class of $Y_{3, 2; \, H}$.
\end{proof}

\medskip

\begin{lemma}\label{lemma:fib6}

If $H \in \Sigma_{3, 2}$, then there are only subgroups, which are $F$-equivalent to $H$ (finitely many subgroups).

Fibers $Z_{3, 2; \, H_1}$ and $Z_{3, 2;\, H_2} $ over equivalent subgroups $H_1$ and $H_2$ may be identified canonically.

\end{lemma}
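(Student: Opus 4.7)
The plan is to follow the template of Lemma~\ref{lemma:fib5}. First I would show that $H_1^* = G$ for every $H_1 \in \Sigma_{3,2}$. By Remark~\ref{rem-cent}, $Z(G) \subset H_1$, so $H_1 \cap Z(G) = Z(G)$ has rank $1$; together with $\rk_1(H_1) = 3$ and $\rk_2(H_1) = 2$ the Hirsch length of $H_1$ equals $3 + 2 + 1 = 6$, which coincides with the Hirsch length of $G$. Hence $H_1$ has finite index in $G$, so for every $g \in G$ we have $g^{[G:H_1]} \in H_1 \subset H_1^*$, which by Definition~\ref{def:star} forces $g \in H_1^*$. Therefore $H_1^* = G$, and in particular $N_G(H_1^*) = G$ trivially. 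By Proposition~\ref{lem:isopairs}(i) no non-trivial conjugation from $G / N_G(H_1^*)$ contributes, so only $F$-equivalent weight pairs $(H_2, \chi_2)$ with $H_2^* = H_1^* = G$ need to be considered.

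Second, to establish finiteness of the $F$-equivalence class of $(H_1, \chi_1)$, I would invoke Proposition~\ref{lem:isopairs}(ii): since $G$ is finitely generated nilpotent, the fiber $\varphi^{-1}((H_1, \chi_1)) \subset Z_{3,2}$ is finite. Concretely, every admissible $H_2$ is obtained from $H_1$ by extracting roots of the five non-central generators $h_1, \ldots, h_5$, and each element of $G$ has only finitely many roots (since every subgroup has finite index in its isolator). The remaining freedom lies in the off-generator parameters $b, e, b', e', b'', e''$ bounded respectively by $|b'''|$ and $|e'''|$, and these are further constrained by the compatibility $\chi_2\vert_{H_1 \cap H_2} = \chi_1\vert_{H_1 \cap H_2}$ coming from Proposition~\ref{prop:twosubgroups}, which reduces to finitely many congruences modulo $N_1$, $N_2$, and $N_3$.

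Third, for each admissible $H_2$ the compatibility condition $\chi_2\vert_{H_1 \cap H_2} = \chi_1\vert_{H_1 \cap H_2}$ determines $\chi_2$ uniquely on a chosen set of generators in terms of $\chi_1$, yielding the required canonical bijection $Z_{3,2;\,H_1} \simeq Z_{3,2;\,H_2}$ on the discrete parameter spaces of the fibers.

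The main technical obstacle, as in Lemma~\ref{lemma:fib5}, is the explicit bookkeeping of the formulas expressing the parameters of $\chi_2$ in terms of those of $\chi_1$ through the various root extractions: with five non-central generators and three discrete invariants $N_1$, $N_2$, $N_3$, the combinatorics is cumbersome, and the explicit formulas can reasonably be suppressed once finiteness and the canonical identification have been verified.
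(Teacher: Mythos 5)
Your proposal is correct and follows essentially the same route as the paper: reduce via $H_1^*=G$ and Proposition~\ref{lem:isopairs} to finitely many root extractions of the generators plus finitely many congruence classes of the bounded parameters $b,b',b'',e,e',e''$, with the character compatibility condition giving the canonical identification of fibers. Your Hirsch-length justification of $H_1^*=G$ is a welcome addition the paper leaves implicit (though note only that \emph{some} power $g^k$ with $k\le[G:H_1]$ need lie in $H_1$, which suffices for the isolator argument), and the relevant congruences are taken modulo $N_3$ only, as the varying parameters enter solely through exponents of $\lambda$.
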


\begin{proof}

If $H_1 \in \Sigma_{3, 2}$, then  $H_1^{*} = G$. By Proposition~\ref{lem:isopairs}[(ii)] we need to consider only equivalent irreducible weight pairs $(H_2, \chi_2)$ such that $H_1^{*} = H_{2}^{*} $ and $H_1~\neq~H_2$. 

Conditions for characters in this case are the following ones:$$\lambda^{f'' b'''} = \lambda^{a e'' + b f''} = \lambda^{a e'''} = 1,$$
 $$ 
w^{\frac{d' f''}{e'''}} \lambda^{ b' f''} = 1 \ \ \ \ \text{and} \ \ \  z^{\frac{-d' a}{b'''}}  \lambda^{ -a e'} = 1.$$ Let us denote $\mathrm{GCD}(f'' b''', a e'' + b f'', a e''')$ by $N_3$. 

We can possibly extract roots from generators $h_1$, $h_2$, $h_3$, $h_4$, and $h_5$. If there is $g_1 \in G \setminus H_1$ such that $g_1^{k_1} = h_1$, or  
$g_2 \in G \setminus H_1$ such that $g_2^{k_2} = h_2$, or $g_3 \in G \setminus H_1$ such that $g_3^{k_3} = h_3$, then by Proposition~\ref{lem:isopairs}[(ii)] for any divisor $m_1$ of $k_1$ or $m_2$ of $k_2$ or $m_3$ of $k_3$, there exists a finite set of exponents of other generators such that $S(H_2, \chi_2) = H_2$. Similarly, for any divisor $m_4$ of $b'''$ and any divisor $m_5$ of $e'''$, there also exists a finite set of corresponding exponents. 

Since $ a, d', f'', b''', e'''$ are fixed, once we fixed the exponents of generators of $H_1$, there are only parameters $b, b', b'' < |b'''|$ and $e, e', e'' < |e'''|$ which can be varied to produce equivalent irreducible weight pairs. Since we have $$ 
w^{\frac{d' f''}{e'''}} \lambda^{ b'  f''} = 1 \ \ \ \ \text{and} \ \ \  z^{\frac{-d' a}{b'''}}  \lambda^{ -a e'} = 1 \ \ \ \ \text{and} \ \ \  \lambda^{a e'' + b f''} = 1, $$
the expressions  $b'  f''$, $-a e'$ and $a e'' + b f''$ can not be changed modulo $N_3$, since $\lambda^{N_3} = 1$. So if we replace  $b, b', b'' < |b'''|$ and $e, e', e'' < |e'''|$ by $\tilde{b}, \tilde{b'}, \tilde{b''} < |b'''|$ and $\tilde{e}, \tilde{e'}, \tilde{e''} < |e'''|$ such that expressions above are unchanged modulo $N_3$, then there exists a corresponding character $\chi_2$ of a corresponding subgroup $H_2$ such that $\chi_1 \vert_{H_1 \cap H_2} =\chi_2 \vert_{H_1 \cap H_2} $. All such weight pairs are also $F$-equivalent to the weight pair $(H_1, \chi_1)$.

\end{proof}

\bigskip
\bigskip

\section{The main result}

Thus, we have finally obtained:

\medskip

\begin{theorem}\label{theorem:main}

There is a one-to-one correspondence between the following spaces:

$1.$ The union of the total spaces of the following bundles: $X_{1, 1} \rightarrow \Xi_{1,1}$, $X_{2, 0} \rightarrow \Xi_{2,0}$, $X_{2, 1} \rightarrow \Xi_{2,1}$, $X_{1, 2} \rightarrow \Xi_{1,2}$, $X_{2, 2} \rightarrow \Xi_{2,2}$, and $X_{3, 2} \rightarrow \Xi_{3,2}$. 

$2.$ A coarse moduli space of irreducible representations for the group of unipotent matrices of order $4$ with integer entries which have finite weight.

\medskip

A map from $X_{r_1, r_2} \rightarrow \Xi_{r_1,r_2}$ to the set of irreducible monomial representations is defined as follows: 
$$(H, \chi) \longmapsto \ind_{H}^{G}(\chi).$$\

\end{theorem}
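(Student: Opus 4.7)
The plan is to assemble the six case-by-case classifications worked out in Sections~$4$--$9$ into a single bijection, and to use the main result of~\cite{bel-gor} to identify the union $\bigsqcup_{(r_1,r_2)} X_{r_1,r_2}$ with the coarse moduli space on the right-hand side.

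First I would recall that by~\cite{bel-gor} every irreducible finite-weight representation $\pi$ of the finitely generated nilpotent group $G$ is monomial, so $\pi \simeq \ind_H^G(\chi)$ for some irreducible weight pair $(H,\chi)$. The map $(H,\chi) \mapsto \ind_H^G(\chi)$ is therefore surjective onto the moduli space, and it remains to analyze its fibers. Two pairs induce isomorphic representations if and only if they are equivalent in the sense of Definition~\ref{z-equiv}; by Proposition~\ref{prop:twosubgroups} this amounts to the existence of $g \in G$ with $(H_2^*)^g = H_1^*$ and $\chi_1|_{H_2^g \cap H_1} = \chi_2^g|_{H_2^g \cap H_1}$. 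The equivalence classes of irreducible weight pairs are thus parametrized by the space $X$ of the commutative diagram following Proposition~\ref{lem:isopairs}.

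Next I would partition by ranks. The rank-invariance remark of Subsection~$2.5$ ensures that equivalent irreducible weight pairs share the same pair $(\mathrm{rk}_1, \mathrm{rk}_2)$, so $X$ decomposes as a disjoint union $X = \bigsqcup_{(r_1,r_2)} X_{r_1,r_2}$, and the first proposition of Section~\ref{sect:classification} asserts that only the six rank pairs $(1,1),(2,0),(2,1),(1,2),(2,2),(3,2)$ can occur. For each such $(r_1,r_2)$, I would invoke the section devoted to it: the corresponding classification statement (Proposition~\ref{prop:1}, Lemmas~\ref{lemma:3}, \ref{lemma:4}, \ref{lemma:5}, \ref{lemma:6}, together with the unlabeled analogue in Section~$5$) supplies the explicit map $\phi$ parametrizing $\Sigma_{r_1,r_2}$ and the total space $Y_{r_1,r_2}$ of irreducible weight pairs; Corollaries~\ref{corollary:fib1}--\ref{corollary:fib6} compute the quotients $Z_{r_1,r_2} \to \Sigma_{r_1,r_2}$ by the fiberwise equivalence $\sim_f$; and Lemmas~\ref{lemma:fib1}--\ref{lemma:fib6} enumerate the remaining identifications across different subgroups needed to pass from $Z_{r_1,r_2}$ to $X_{r_1,r_2} \to \Xi_{r_1,r_2}$ through the further quotients by $\sim_*$ and by outer conjugation by $G/N_G(H^*)$.

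The main obstacle has already been overcome in those sections --- it is the lengthy case-by-case bookkeeping of which pairs $(H,\chi)$ and $(H',\chi')$ with possibly different subgroups produce isomorphic induced representations, and in particular which pairs sharing the same isolated envelope are $F$-equivalent. At the assembly stage, what remains is to verify that the three successive quotients in the diagram after Proposition~\ref{lem:isopairs} (namely $\sim_f$, $\sim_*$, and outer conjugation) compose to exactly the equivalence of Definition~\ref{z-equiv}; this unpacking is immediate from the three conditions of Proposition~\ref{prop:twosubgroups}. Combining this with the monomiality input from~\cite{bel-gor} and the rank exhaustion from Section~$3$ yields the desired bijection between $\bigsqcup_{(r_1,r_2)} X_{r_1,r_2}$ and the coarse moduli space, with the map on each stratum exhibited by $(H,\chi) \mapsto \ind_H^G(\chi)$ as claimed.
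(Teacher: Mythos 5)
Your proposal is correct and follows essentially the same route as the paper: the paper's own justification of Theorem~\ref{theorem:main} is precisely the assembly of the monomiality result from~\cite{bel-gor}, the rank exhaustion of Section~\ref{sect:classification}, and the case-by-case classifications and fiber computations of Sections~$4$--$9$, with the successive quotients $\sim_f$, $\sim_*$, and conjugation unpacking the equivalence criterion of Proposition~\ref{prop:twosubgroups}. Your writeup makes the surjectivity and fiber analysis slightly more explicit than the paper does, but the underlying argument is identical.
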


\medskip

The fibers of these bundles are given in Corollaries~\ref{corollary:fib1},~\ref{corollary:fib2},~\ref{corollary:fib3},~\ref{corollary:fib4},~\ref{corollary:fib5},~\ref{corollary:fib6} and Lemmas~\ref{lemma:fib1}, ~\ref{lemma:fib2},~\ref{lemma:fib3},~\ref{lemma:fib4},~\ref{lemma:fib5},~\ref{lemma:fib6}. 
The definition of the bundle $X_{r_1, r_2} \rightarrow \Xi_{r_1,r_2}$ is given in~\ref{spaces}.

The fibers of the bundle $Z_{\mathrm{rk}_1, \mathrm{rk}_2} \rightarrow \Sigma_{\mathrm{rk}_1, \mathrm{rk}_2}$ are given in Table~\ref{table} below (see the corresponding definition in~\ref{z-equiv}).

\newpage

\begin{table}[]
\centering
\caption{Fibers of the bundle $Z_{\mathrm{rk}_1, \mathrm{rk}_2} \rightarrow \Sigma_{\mathrm{rk}_1, \mathrm{rk}_2}$}
\label{table}
\begin{tabular}{|l|l|l|l|l|l|l|l|l|}
\hline

\multicolumn{1}{|c|}{$\mathrm{rk}_1$} & $\mathrm{rk}_2$ & $\text{Subset}$ & $t$ & $r$ & $s$ & $z$ & $w$ & \multicolumn{1}{c|}{ $\lambda$}\\ \hline
 $1$ & $1$ & $\phi(S_1)$ & $ T_{ {z^d \lambda^{a'e + f'b + a'f'd}, \ a} }$ &  &  & $E_{\lambda^{2 a' f' }} \setminus \mu_{\infty}$ &  &  $\CC^* \setminus S^1$ \\ \hline
 $1$ & $1$ & $\phi(S_1)$ & $ T_{ {z^d \lambda^{a'e + f'b + a'f'd}, \ a} }$ &  &  & $P_{\lambda^{2 a' f' }} \setminus \mu_{\infty}$ &  &  $S^1 \setminus \mu_{\infty}$ \\ \hline
 $1$ & $1$ & $\phi(S_1)$ & $ E_{\lambda^{\mathrm{GCD}(a'e + f'b + a'f'd, a)}}$ &  &  & $\mu_{\infty} $ &  &  $\CC^* \setminus S^1$ \\ \hline
 $1$ & $1$ & $\phi(S_1)$ & $ P_{\lambda^{\mathrm{GCD}(a'e + f'b + a'f'd, a)}}$ &  &  & $\mu_{\infty} $ &  &  $S^1 \setminus \mu_{\infty}$ \\ \hline
 
  $1$ & $1$ & $\phi(S_1)$ & $ T_{ {z^d \lambda^{a'e + f'b + a'f'd}, \ a} }$ &  &  & $E_{\lambda^{2 a' f' }} \setminus \mu_{\infty}$ &  &  $\CC^* \setminus S^1$ \\ \hline
 $1$ & $1$ & $\phi(S_1)$ & $ T_{ {z^d \lambda^{a'e + f'b + a'f'd}, \ a} }$ &  &  & $P_{\lambda^{2 a' f' }} \setminus \mu_{\infty}$ &  &  $S^1 \setminus \mu_{\infty}$ \\ \hline
 $1$ & $1$ & $\phi(S_1)$ & $ E_{\lambda^{\mathrm{GCD}(a'e + f'b + a'f'd, a)}}$ &  &  & $\mu_{\infty} $ &  &  $\CC^* \setminus S^1$ \\ \hline
 $1$ & $1$ & $\phi(S_1)$ & $ P_{\lambda^{\mathrm{GCD}(a'e + f'b + a'f'd, a)}}$ &  &  & $\mu_{\infty} $ &  &  $S^1 \setminus \mu_{\infty}$ \\ \Xhline{2\arrayrulewidth}
 
 $1$ & $1$ & $\phi(S_2)$ & $ T_{ {z^d \lambda^{a'e + f'b + a'f'd}, \ a} }$ &  &  & $E_{\lambda^{2 a' f' }} \setminus \mu_{\infty}$ &  &  $\CC^* \setminus S^1$ \\ \hline
 $1$ & $1$ & $\phi(S_2)$ & $ T_{ {z^d \lambda^{a'e + f'b + a'f'd}, \ a} }$ &  &  & $P_{\lambda^{2 a' f' }} \setminus \mu_{\infty}$ &  &  $S^1 \setminus \mu_{\infty}$ \\ \hline
 $1$ & $1$ & $\phi(S_2)$ & $ E_{\lambda^{\mathrm{GCD}(a'e + f'b + a'f'd, a)}}$ &  &  & $\mu_{\infty} $ &  &  $\CC^* \setminus S^1$ \\ \hline
 $1$ & $1$ & $\phi(S_2)$ & $ P_{\lambda^{\mathrm{GCD}(a'e + f'b + a'f'd, a)}}$ &  &  & $\mu_{\infty} $ &  &  $S^1 \setminus \mu_{\infty}$ \\ \hline
 
  $1$ & $1$ & $\phi(S_2)$ & $ T_{ {z^d \lambda^{a'e + f'b + a'f'd}, \ a} }$ &  &  & $E_{\lambda^{2 a' f' }} \setminus \mu_{\infty}$ &  &  $\CC^* \setminus S^1$ \\ \hline
 $1$ & $1$ & $\phi(S_2)$ & $ T_{ {z^d \lambda^{a'e + f'b + a'f'd}, \ a} }$ &  &  & $P_{\lambda^{2 a' f' }} \setminus \mu_{\infty}$ &  &  $S^1 \setminus \mu_{\infty}$ \\ \hline
 $1$ & $1$ & $\phi(S_2)$ & $ E_{\lambda^{\mathrm{GCD}(a'e + f'b + a'f'd, a)}}$ &  &  & $\mu_{\infty} $ &  &  $\CC^* \setminus S^1$ \\ \hline
 $1$ & $1$ & $\phi(S_2)$ & $ P_{\lambda^{\mathrm{GCD}(a'e + f'b + a'f'd, a)}}$ &  &  & $\mu_{\infty} $ &  &  $S^1 \setminus \mu_{\infty}$ \\ \Xhline{2\arrayrulewidth}

 $1$ & $1$ & $\phi(S_3)$ & $ T_{z^d \lambda^{a'e}, \lambda^{a}}$ &  &  & $\CC^* \setminus S^1$ &  &  $\CC^* \setminus S^1$ \\ \hline
 $1$ & $1$ & $\phi(S_3)$ & $ T_{z^d \lambda^{a'e}, \lambda^{a}}$ &  &  & $S^1 \setminus \mu_{\infty}$ &  &  $\CC^* \setminus S^1$ \\ \hline
 $1$ & $1$ & $\phi(S_3)$ & $ T_{z^d \lambda^{a'e}, \lambda^{a}} $ &  &  & $\CC^* \setminus \mu_{\infty}$ &  &  $S^1 \setminus \mu_{\infty}$ \\ \hline
 $1$ & $1$ & $\phi(S_3)$ & $  E_{\lambda^{\mathrm{GCD}(a'e, a)}} $ &  &  & $\mu_{\infty}  $ &  &  $\CC^* \setminus S^1$ \\ \hline
 $1$ & $1$ & $\phi(S_3)$ & $ P_{\lambda^{\mathrm{GCD}(a'e, a)}} $ &  &  & $\mu_{\infty} $ &  &  $S^1 \setminus \mu_{\infty}$ \\ \Xhline{2\arrayrulewidth}
 
 $1$ & $1$ & $\phi(S_4)$ & $ E_{\lambda^{\mathrm{GCD}(a'e + f'b, a)}}$ &  &  & $E_{\lambda^{2 a' f'}}$ &  &  $\CC^* \setminus S^1$ \\ \hline
 $1$ & $1$ & $\phi(S_4)$ & $ P_{\lambda^{\mathrm{GCD}(a'e + f'b, a)}}$ &  &  & $P_{\lambda^{2 a' f'}}$ &  &  $S^1 \setminus \mu_{\infty}$ \\ \Xhline{2\arrayrulewidth}
 
 $1$ & $1$ & $\phi(N_1)$ & $ T_{z^{a}, \lambda^{a}}$ &  &  & $E_{\lambda} \setminus \mu_{\infty}$ &  &  $\CC^* \setminus S^1$ \\ \hline
 $1$ & $1$ & $\phi(N_1)$ & $  T_{z^{a}, \lambda^{a}}$ &  &  & $P_{\lambda} \setminus \mu_{\infty}$ &  &  $S^1 \setminus \mu_{\infty}$\\ \hline
 $1$ & $1$ & $\phi(N_1)$ & $ E_{\lambda}$ &  &  & $\mu_{\infty}$ &  &  $\CC^* \setminus S^1$ \\ \hline
 $1$ & $1$ & $\phi(N_1)$ & $ P_{\lambda}$ &  &  & $\mu_{\infty}$ &  &  $S^1 \setminus \mu_{\infty}$  \\ \Xhline{2\arrayrulewidth}
 
 $1$ & $1$ & $\phi(N_2)$ & $ T_{z^{f}, \lambda^{f}} $ &  &  & $E_{\lambda} \setminus \mu_{\infty}$ &  &  $\CC^* \setminus S^1$ \\ \hline
 $1$ & $1$ & $\phi(N_2)$ & $ T_{z^{f}, \lambda^{f}} $ &  &  & $P_{\lambda} \setminus \mu_{\infty}$ &  &  $S^1 \setminus \mu_{\infty}$\\ \hline
 $1$ & $1$ & $\phi(N_2)$ & $ E_{\lambda}$ &  &  & $\mu_{\infty}$ &  &  $\CC^* \setminus S^1$ \\ \hline
 $1$ & $1$ & $\phi(N_2)$ & $ P_{\lambda}$ &  &  & $\mu_{\infty}$ &  &  $S^1 \setminus \mu_{\infty}$  \\ \Xhline{2\arrayrulewidth}

 $2$ & $0$ & $\Sigma_{2, 0}$ & $ E_{\lambda^{f'}} $ &  & $E_{\lambda^a} $ &  &  &  $\CC^* \setminus S^1$ \\ \hline
 $2$ & $0$ & $\Sigma_{2, 0}$ & $ P_{\lambda^{f'}} $ &  & $P_{\lambda^a} $ &  &  &  $S^1 \setminus \mu_{\infty}$   \\ \Xhline{2\arrayrulewidth}

 $2$ & $1$ & $\Sigma_{2, 1}$ & $ E_{\lambda^a} $ & $ \CC^{*}$ & & $ \mu_{a d'} $ &  &  $\CC^* \setminus S^1$ \\ \hline
 $2$ & $1$ & $\Sigma_{2, 1}$ & $ P_{\lambda^a}$ & $ \CC^{*}$ & & $ \mu_{a d'} $  &  &  $S^1 \setminus \mu_{\infty}$   \\ \Xhline{2\arrayrulewidth}
 
  $1$ & $2$ & $\Sigma_{1, 2} \setminus \phi(A)$ & $T_{z^{\frac{ad'}{b'}} w^{\frac{fd'}{e'}}, \, w^{\frac{f'd}{e'}} \lambda^{b f' }} $ &  &  & $\CC^{*} \setminus \mu_{\infty}$ & $\CC^{*} \setminus \mu_{\infty}$ & $\mu_{N}$ \\ \Xhline{2\arrayrulewidth}

\end{tabular}
\end{table}
\newpage

\begin{table}[]
\centering
\begin{tabular}{|l|l|l|l|l|l|l|l|l|}
\hline

\multicolumn{1}{|c|}{$1$} &  $2$ & $\phi(A)$ & $T_{z, w} $ &  &  & $E_{\lambda}  \setminus \mu_{\infty}$ & $E_{\lambda}  \setminus \mu_{\infty}$ & \multicolumn{1}{c|}{ $ \CC^{*} \setminus S^1$}\\ \hline
 $1$ & $2$ & $\phi(A)$ & $T_{z, w} $ &  &  & $P_{\lambda}  \setminus \mu_{\infty} $ & $P_{\lambda}  \setminus \mu_{\infty} $ & $P_{\lambda}  \setminus \mu_{\infty} $ \\ \hline
 $1$ & $2$ & $\phi(A)$ & $ E_{z} $ &  &  & $E_{\lambda} \setminus \mu_{\infty} $ & $E_{\lambda} \setminus \mu_{\infty} $ & $\CC^{*} \setminus S^1$ \\ \hline
 $1$ & $2$ & $\phi(A)$ & $ E_{z} $ &  &  & $P_{\lambda} \setminus \mu_{\infty}$ & $P_{\lambda} \setminus \mu_{\infty}$ & $S^1 \setminus \mu_{\infty}$ \\ \hline
 
 $1$ & $2$ & $\phi(A)$ & $E_{w} $ &  &  & $E_{\lambda} \cap  \mu_{\infty}$ & $E_{\lambda} \cap  \mu_{\infty}$ & $ \CC^{*} \setminus  S^1$ \\ \hline
 $1$ & $2$ & $\phi(A)$ & $E_{w} $ &  &  & $P_{\lambda} \cap  \mu_{\infty}$ & $P_{\lambda} \cap  \mu_{\infty}$ & $S^1 \setminus \mu_{\infty}$ \\ \hline
 
 $1$ & $2$ & $\phi(A)$ & $ \CC^{*} $ &  &  & $E_{\lambda} \cap  \mu_{\infty}$ & $E_{\lambda} \cap  \mu_{\infty}$ & $ \CC^{*} \setminus  S^1 $ \\ \hline
 $1$ & $2$ & $\phi(A)$ & $ \CC^{*} $ &  &  & $P_{\lambda} \cap  \mu_{\infty} $ & $P_{\lambda} \cap  \mu_{\infty} $ & $S^1 \setminus \mu_{\infty} $ \\ \hline
 
 $1$ & $2$ & $\phi(A)$ & $T_{z,w} $ &  &  & $\CC^{*} \setminus \mu_{\infty}$ & $\CC^{*} \setminus \mu_{\infty}$ & $ \mu_{\infty} $ \\ \hline
 $1$ & $2$ & $\phi(A)$ & $T_{z,w} $ &  &  & $\CC^{*} \setminus \mu_{\infty}$ & $\CC^{*} \setminus \mu_{\infty}$ & $ \mu_{\infty} $ \\ \Xhline{2\arrayrulewidth}

  $2$ & $2$ & $\phi(S_1)$ & $\CC^{*}$ &  & $ E_{w^{\frac{d'\tilde{f}}{e''}} } $ & $C_{z, w}$ & $C_{z, w}$ & $ \mu_{N}$ \\ \hline
  
  $2$ & $2$ & $\phi(S_1)$ & $\CC^{*}$ &  & $ P_{w^{\frac{d'\tilde{f}}{e''}} } $  & $C_{z, w, sing}$ & $C_{z, w, sing}$ & $\mu_{N}$ \\ \hline

  $2$ & $2$ & $\phi(S_1)$ & $\CC^{*}$ &  & $ E_{w^{\frac{d'\tilde{f}}{e''}} } $ & $C_{z, w}$ & $C_{z, w}$ & $ \mu_{N}$ \\ \hline
  
 $2$ & $2$ & $\phi(S_1)$ & $\CC^{*}$ &  & $ P_{w^{\frac{d'\tilde{f}}{e''}} }$  & $C_{z, w, sing}$ & $C_{z, w, sing}$ & $\mu_{N}$ \\ \hline
 
 $2$ & $2$ & $\phi(S_2)$ & $ \CC^{*} $ &  & $E_{w^{\frac{d'\tilde{f}}{e''}} }$ & $\mu_{N_2}$ & $\CC^* \setminus S^1 $ & $\mu_{a e''}$ \\ \hline
 
$2$ & $2$ & $\phi(S_2)$ & $ \CC^{*} $ &  & $P_{w^{\frac{d'\tilde{f}}{e''}} } $ & $\mu_{N_2}$ & $S^1 \setminus \mu_{\infty} $ & $\mu_{a e''}$ \\ \hline
 
 $2$ & $2$ & $\phi(S_3)$ & $E_{z^{\frac{d' a}{b''}}} $ &  & $ \CC^{*} $ & $ \CC^* \setminus S^1$ & $ \mu_{N_3} $ & $\mu_{f' b''}$ \\ \hline
 
$2$ & $2$ & $\phi(S_3)$ & $P_{z^{\frac{d' a}{b''}}} $ &  & $ \CC^{*} $ & $S^1 \setminus \mu_{\infty} $ & $ \mu_{N_3} $ & $\mu_{f' b''}$ \\ \hline

  $2$ & $2$ & $\phi(S_4)$ & $E_{z^{\frac{d' a}{b''}}} $ &  & $E_{w^{\frac{d'\tilde{f}}{e''}} } $ & $\CC^{*} \setminus S^1$ & $\CC^{*} \setminus S^1$ & $ \mu_{N}$ \\ \hline
  
  $2$ & $2$ & $\phi(S_4)$ & $P_{z^{\frac{d' a}{b''}}} $ &  & $E_{w^{\frac{d'\tilde{f}}{e''}} }$  & $S^1 \setminus \mu_{\infty}$ & $\CC^{*} \setminus S^1$ & $\mu_{N}$ \\ \hline
  $2$ & $2$ & $\phi(S_4)$ & $E_{z^{\frac{d' a}{b''}}} $ &  & $E_{w^{\frac{d'\tilde{f}}{e''}} } $ & $\CC^{*} \setminus S^1$ & $\CC^{*} \setminus S^1$ & $ \mu_{N}$ \\ \hline
  $2$ & $2$ & $\phi(S_4)$ & $P_{z^{\frac{d' a}{b''}}} $ &  & $P_{w^{\frac{d'\tilde{f}}{e''}} }$  & $S^1 \setminus \mu_{\infty}$ & $S^1 \setminus \mu_{\infty}$ & $\mu_{N}$ \\ \hline
  
   $2$ & $2$ & $\phi(S_4)$ & $\CC^{*}$ &  & $E_{w^{\frac{d'\tilde{f}}{e''}} } $ & $\mu_{\infty}$ & $\CC^{*} \setminus S^1$ & $ \mu_{N}$ \\ \hline
  
  $2$ & $2$ & $\phi(S_4)$ & $E_{z^{\frac{d' a}{b''}}} $ &  & $\CC^{*} $  & $\CC^{*} \setminus S^1$ & $\mu_{\infty}$ & $\mu_{N}$ \\ \hline
  
  $2$ & $2$ & $\phi(S_4)$ & $\CC^{*}$ &  & $P_{w^{\frac{d'\tilde{f}}{e''}} } $ & $\mu_{\infty}$ & $S^1 \setminus \mu_{\infty}$ & $ \mu_{N}$ \\ \hline
  
  $2$ & $2$ & $\phi(S_4)$ & $P_{z^{\frac{d' a}{b''}}} $ &  & $\CC^{*}$  & $S^1 \setminus \mu_{\infty}$ & $\mu_{\infty}$ & $\mu_{N}$ \\ \Xhline{2\arrayrulewidth}

  $3$ & $2$ & $\Sigma_{3, 2}$ & $\CC^{*} $ & $\CC^{*}$  & $\CC^{*}$  & $\mu_{N_1}$ & $\mu_{N_2}$ & $\mu_{N_3}$ \\ \Xhline{2\arrayrulewidth}

\end{tabular}
\end{table}

\medskip

{\bf Acknoledgements.} The author is grateful to S.\,Gorchinskiy for useful discussions and suggestions. The author was supported by the National Centre of Competence in Research ``SwissMAP The Mathematics of Physics" of the Swiss National Science Foundation. The author is also grateful for hospitality and excellent working conditions to the Max Planck Institute for Mathematics, where a part of the work was done.

\bigskip
\bigskip
\bigskip

\end{document}